\title{Provable Phase Retrieval with Mirror Descent}
\author{Jean-Jacques Godeme\thanks{Normandie Univ, ENSICAEN, CNRS, GREYC, France. e-mail: \texttt{jean-jacques.godeme@unicaen.fr, Jalal.Fadili@ensicaen.fr}.} \and Jalal Fadili\footnotemark[1] \and Xavier Buet\thanks{Aix-Marseille Univ, CNRS, Centrale Marseille, Institut Fresnel, Marseille, France. \texttt{firstname.lastname@fresnel.fr}.} \and Myriam Zerrad\footnotemark[2] \and Michel Lequime\footnotemark[2] \and Claude Amra\footnotemark[2].}  
\date{}
\begin{document}

\maketitle
\begin{flushleft}\end{flushleft}
\begin{abstract}
In this paper, we consider the problem of phase retrieval, which consists of recovering an $n$-dimensional real vector from the magnitude of its $m$ linear measurements. We propose a mirror descent (or Bregman gradient descent) algorithm based on a wisely chosen Bregman divergence, hence allowing to remove the classical global Lipschitz continuity requirement on the gradient of the non-convex phase retrieval objective to be minimized. We apply the mirror descent for two random measurements: the \iid standard Gaussian and those obtained by multiple structured illuminations through Coded Diffraction Patterns (CDP). For the Gaussian case, we show that when the number of measurements $m$ is large enough, then with high probability, for almost all initializers, the  algorithm recovers the original vector up to a global sign change. For both measurements, the mirror descent exhibits a local linear convergence behaviour with a dimension-independent convergence rate. Our theoretical results are finally illustrated with various numerical experiments, including an application to the reconstruction of images in precision optics.
\end{abstract}

\begin{keywords}
Phase retrieval, Inverse problems, Mirror descent, Random measurements.
\end{keywords}

\section{Introduction}\label{sec:intro}
\subsection{Problem statement and motivations}
{In this work, we study phase retrieval which is an ill-posed inverse problem which consists in recovering a general signal from the intensity of its $m$ linear measurements, i.e., from phaseless observations. Historically, the first application of phase retrieval started with X-ray crystallography, and it now permeates many areas of imaging science with applications that include diffraction imaging, astronomical imaging, microscopy to name just a few; see \cite{shechtman_phase_2014,JaganathanReview16,luke_phase_2017} and references therein. One of the main applications motivating our work originates from precision in optics. Often components (e.g., interference filters) exhibit optical losses of order  $10^{-6}$ of the incident power. Super-polished surfaces are commonly used to circumvent this issue. Indeed, their roughness (responsible for losses by optical scattering) is very low compared to the illumination wavelength. Therefore, it is crucial to know how to characterize the roughness of polished surfaces. To do so, light scattering is ideal among the existing techniques because it is fast and non-invasive. The surface is illuminated with a laser source, and the diffusion is measured by moving a detector. Then the power spectral density of the surface topography can be directly measured thanks to the electromagnetic theory of light scattering; see \cite{amra_instantaneous_2018,Buet22} and references therein for a detailed description.

Our focus in this paper will be on the case of real signals. Formally, suppose $\avx \in \bbR^n$ is a signal and that we are given information about the squared modulus of the inner product between $\avx$ and $m$ sensing/measurement vectors $(a_r)_{r \in \tcb{\bbrac{m}}}$. The phase retrieval problem can be cast as:  
\begin{equation}\tag{GeneralPR}\label{GeneralPR}
\begin{cases}
\text{Recover $\avx\in \bbR^n$ from the measurements $y \in \bbR^m$} \\
y[r]=|\adj{a_r}\avx|^{2}, \quad r \in \tcb{\bbrac{m}} \eqdef \{1,\cdots,m\} 
\end{cases}
\end{equation}
where $y[r]$ is the $r$-th entry of the vector $y$.

Since $\avx$ is real-valued, the best one can hope is to ensure that $\avx$ is uniquely determined by $y$ up to a global sign. Phase retrieval is in fact an ill-posed inverse problem in general and is known to be NP-hard \cite{Sahinoglou91}. Thus, one of the major challenges is to design efficient recovery algorithms and find conditions on $m$ and $(a_r)_{r \in \tcb{\bbrac{m}}}$ which guarantee exact (up to a global sign change) and robust recovery.} 

%
%

\subsection{Prior work}
{
Our review here is by no means exhaustive and the interested reader should refer to the following references for comprehensive reviews \tcb{\cite{shechtman_phase_2014,JaganathanReview16,fannjiang_numerics_2020,vaswani_non-convex_2020}}.

\paragraph*{\textbf{Feasibility formulation of constrained phase retrieval}}
\tcb{In the one-dimensional case with Fourier measurements, it was shown by \cite{akutowicz_determination_1956,akutowicz_determination_1957,walther_question_1963} (see also \cite{Beinert15,Bendory2017} in the discrete case) that the phase retrieval problem without any a priori constraints lacks uniqueness (up to trivial ambiguities). This fundamental barrier does not apply in higher dimensions as pointed out in \cite{BruckSodin79} and shown in \cite{hayes_reconstruction_1982} for band-limited 2D signals, and uniqueness was shown to hold "generically" in \cite{barakat_algorithms_1985}.}

To circumvent this barrier, workarounds have been proposed that involve adding a constraint either implicitly or explicitly. Phase retrieval is then formulated as a feasibility problem, that is, as finding some point in the intersection of the set of points satisfying the constraints implied by the data measurements in \eqref{GeneralPR}, and the set of points satisfying constraints expressing some prior knowledge on the object to recover, such as support, band-limitedness, non-negativity, sparsity, etc. The Gerchberg and Saxton algorithm \cite{gerchberg_practical_1972}, proposed in  the early 70's in the optics literature, is an alternating projection algorithm to solve such a feasibility problem. Improved variants include Fienup's basic input-output and the hybrid input-output (HIO) \cite{crimmins_ambiguity_1981,fienup_phase_1982,crimmins_uniqueness_1983}. For the case of a support constraint alone, it has been identified by \cite{bauschke_phase_2002} that HIO corresponds to the now well-known Douglas-Rachford algorithm. Other fixed-point iterations based on projections that apply to constrained phase retrieval have also been proposed, such as the HPR scheme \cite{Bauschke04}, or RAAR \cite{Luke08} which is a relaxation of Douglas-Rachford. Thanks to a wealth of results in the variational analysis community, some convergence properties of these algorithms for the phase retrieval problem are now known. \tcb{One has to distinguish between the two important cases for feasibility problems: consistent and inconsistent. 

For consistent phase retrieval problems, it is known for instance that alternating projections is locally linearly convergent at points of intersection provided that the constraints do not intersect tangentially \cite{Drusvyatskiy15,Bauschke13,Noll16,Luke12,Lewis09,Lewis08}. Similar results are also known for Douglas-Rachford \cite{HesseLuke13,Phan16}. Global convergence guarantees are however only conjectured, and translating the non-tangential intersection into conditions on $m$ and $(a_r)_{r \in \tcb{\bbrac{m}}}$ remains open. 

For the inconsistent case, it was argued in \cite{luke_phase_2017} that almost any constraint, in particular compact support, will be inconsistent with the measurement process in optical phase retrieval problems. This means that the corresponding feasibility problems are inconsistent. In this even more challenging inconsistent phase retrieval setting, the only two works that we are aware of where local linear convergence of alternating projections and relaxed Douglas-Rachford to local best approximation points is established are \cite[Theorem~3.2 and Example~3.6]{Luke18} and \cite[Theorem~4.11 and Section~5]{Luke20}.}

\paragraph*{\textbf{Unconstrained phase retrieval}}
In the unconstrained setting of \eqref{GeneralPR}, the dominant approach in computational phase retrieval is to take more measurements to ensure well-posedness and improve the performance of phase retrieval algorithms. This idea of oversampling has been known for a while, and for instance in non-crystallographic modalities \cite{Miao99}. From a theoretical point of view, for the case where $(a_r)_{r \in \tcb{\bbrac{m}}}$ is a frame (redundant complete system), the authors in \cite{balan_signal_2006,balan_reconstruction_2016} derived various necessary and sufficient conditions for the uniqueness of the solution, as well as algebraic polynomial-time numerical algorithms valid for very specific choices of $(a_r)_{r \in \tcb{\bbrac{m}}}$. This approach is however of theoretical interest only and has drawbacks for instance that it requires specific types of measurements that cannot be realized in most
applications of interest.

A very different route consists in considering that the measurement vectors $(a_r)_{r \in \tcb{\bbrac{m}}}$ are sampled from an appropriate distribution, and then showing that when $m$ is on the order of $n$ (up to polylogarithmic factors),  then with high probability the original vector can be recovered exactly up to sign or phase change in the complex case, from the magnitude measurements in \eqref{GeneralPR}. This can can be done either through semidefinite convex relaxation or by directly attacking the non-convex formulation of the phase retrieval problem.

\paragraph*{\textbf{Convex relaxation}}
The key ingredient is to use a well-known trick turning a quadratic function on $\bbR^n$, such as in the data measurements in \eqref{GeneralPR}, into a linear function on the space of $n \times n$ matrices \cite{BenTal01,goemans_improved_1995}. Thus the recovery of a vector from quadratic measurements is lifted into that of recovering a rank-one Hermitian semidefinite positive (SDP) matrix from affine constraints, and the rank-one constraint is then relaxed into a convenient convex one. The two most popular methods in this line are PhaseLift \cite{candes_phase_2013-2} and PhaseCut \cite{waldspurger_phase_2015}. Both approaches are inspired by the matrix completion problem \cite{candes_matrix_2010} and they differ in the way factorization takes place. Exact and robust recovery with random Gaussian or CDP (Coded Diffraction Patterns) measurements using PhaseLift was established in \cite{candes_phaselift_2013,candes_solving_2014,candes_phase_2015}. For Gaussian measurements, \cite{candes_phaselift_2013} showed that exact recovery by PhaseLift holds for a sampling complexity bound $m \gtrsim n \log(n)$. This has been improved to a universal result with $m \gtrsim n$. Exact recovery by PhaseLift for CDP measurements was established in \cite{candes_phase_2015} for $m \gtrsim n \log^4(n)$, and has been improved to $m \gtrsim n \log^2(n)$ in \cite{gross_improved_2017}. While SDP based relaxations lead to solving tractable convex problems, the prospect of squaring the number of unknowns make them computationally prohibitive and impractical as $n$ increases. Since  then, more direct non-convex methods are again being proposed.


\paragraph*{\textbf{Nonconvex formulations}}  
The general strategy here is to use an initialization techniques that land one in a neighborhood of the optimal solution (up to global sign or phase change) where a usual iterative procedure from nonlinear programming with carefully chosen parameters can perform reliably. 

In \cite{Candes_WF_2015}, the authors use a spectral initialization and propose a gradient-descent type algorithm (Wirtinger flow) for solving the general complex phase retrieval problem by casting it as 
\begin{equation}\label{formulpro}
\min_{z\in\bbC^n} f(z) \eqdef \qsom{\paren{y[r]-|\adj{a_r}z|^{2}}^2} .
\end{equation}
For an appropriate (Wirtinger) gradient-descent step-size, they showed that with high probability, the scheme converges linearly to the true vector (up to a global phase change) for both Gaussian and CDP measurements provided that $m$ is on the order of $n$ up to polylogarithmic terms. 
A truncated version of the Wirtinger flow was proposed in \cite{chen_solving_2017} which uses careful selection rules providing a tighter initial guess, better descent directions and step-sizes, and thus enhanced performance. For Gaussian measurements, truncated Wirtinger flow was also shown to converge linearly to the correct solution and is robust to noise provided that $m \gtrsim n$. Other variants of Wirtinger flow possibly and/or other initializations were proposed in \cite{zhang_nonconvex_2017} and \cite{wang_solving_2017}, and were shown to enjoy similar guarantees in the noiseless case for Gaussian measurements. \tcb{The Polyak subgradient method to minimize $\frac{1}{m}\sum_{r=1}^m{|y[r]-|\transp{a_r}z|^{2}|}$ on $\R^n$ was proposed and analyzed in \cite{davis_nonsmooth_2020} for noiseless real phase retrieval with real isotropic sub-gaussian measurements. When properly initialized, its linear convergence was also shown for $m \gtrsim n$.}

An alternating minimization strategy, alternating between phase update and vector update, with a resampling-based initialization has been proposed in \cite{netrapalli_phase_2015} and was shown to enjoy noiseless exact recovery for $m \gtrsim n\log(n)^3$. A truncated version of the spectral initialization followed by alternating projection was also proposed in \cite{waldspurger_phase_2018} with exact recovery guarantees for Gaussian measurements under the sample complexity bound $m \gtrsim n$.

The authors in \cite{sun_geometric_2018} studied the landscape geometry of the nonconvex objective in \eqref{formulpro} for Gaussian measurements. They showed that for large enough number of measurements, i.e., $m \gtrsim n\log(n)^3$, there are no spurious local minimizers, all global minimizers are equal to the correct signal $\avx$, up to a global sign or phase, and the objective function has a negative directional curvature around each saddle point (that we coin strict saddles in our paper). This allowed them to describe and analyze a second-order trust-region algorithm to find a global minimizer without special initialization. The work of \cite{chen_gradient_2019} provides an analysis of global convergence properties of gradient descent for (real) Gaussian measurements and heavily relying on Gaussianity of the initialization. They required a sampling complexity bound $m \gtrsim n \mathrm{poly}\log(m)$ without making explicit the linear local convergence rate.
} 

\subsection{Contributions and relation to prior work}
{
In this paper, we consider the real\footnote{This is motivated by  main application in light scattering where the roughness of a surface to be recovered is real.} phase retrieval problem that we formulate \eqref{GeneralPR} as the minimization problem \eqref{formulepro}. Inspired by \cite{bolte_first_2017}, we propose a mirror descent (or Bregman gradient descent) algorithm with backtracking associated to a wisely chosen Bregman divergence, hence removing the classical global Lipschitz continuity requirement on the gradient of the nonconvex objective in \eqref{formulepro}.

In the deterministic case, we show that for almost all initializers, bounded iterates of our algorithm  converge to a critical point where the objective has no direction of negative curvature, i.e., a critical point which is not a strict saddle point. In addition, \tcb{provided that a local relative strong convexity property holds}, we also show that our mirror descent scheme exhibits a local linear convergence behaviour.  

In the case of \iid standard Gaussian measurements, provided that the the number $m$ of sensing vectors is large enough, it turns out that the iterates of our algorithm are bounded, \tcb{and that the set of critical points of the objective $f$ in \eqref{formulpro} is the union of $\ens{\pm \avx}$ and the set of strict saddle points.} This together with the above deterministic guarantees ensures that with high probability, for almost all initializers, our mirror descent recovers the original vector $\avx$ up to a global sign change, and exhibits a local linear convergence behaviour with a dimension-independent convergence rate. \tcb{Our results are far more general than those of \cite{chen_gradient_2019} as we require for instance a smaller sampling complexity bound and we assume any random initialization provided that it is drawn from a distribution that has a density \wrt the Lebesgue measure, \ie the Gaussian nature of initialization in \cite{chen_gradient_2019} is irrelevant in our context.}

\tcb{For both CDP and Gaussian measurements, we show that one can afford a smaller sampling complexity bound but at the price of using an appropriate spectral initialization procedure to find an initial guess near a solution before applying our scheme. Starting from this initial guess, mirror descent then converges linearly to the true vector up to a global sign change with a dimension-independent convergence rate. This is in contrast with the Wirtinger flow \cite{Candes_WF_2015} which also requires spectral initialization and whose local convergence rate degrades with the dimension, though the latter aspect has been improved in the truncated Wirtinger flow \cite{chen_solving_2017}.  The Polyak subgradient method \cite{davis_nonsmooth_2020} initialized with a spectral method provably converges linearly with isotropic sub-gaussian measurements under a sample complexity bound similar to ours. However, no analysis is known for the CDP measurement model. Observe also that the Polyak subgradient algorithm requires the knowledge of the minimal value of the phase retrieval objective. This is obviously $0$ for the noiseless case but is unknown in the noisy one. In terms of computational complexity, mirror descent involves solving the mirror step (see Proposition~\ref{prop:mirrorstep}) which amounts to computing the unique real positive root of a third order polynomial and then multiplying it by the entry vector. This costs $O(n)$ operations. Overall, the computational complexity of mirror descent is similar to that of other first-order methods such as the Wirtinger flow or the Polyak subgradient algorithm.}

Though we focus on Gaussian measurements when establishing the global recovery properties of our mirror descent algorithm, our theory extends to the situation where the $a_r$'s are \iid sub-Gaussian random vectors. The case where $a_r$'s are a drawn form the CDP model is, however, far more challenging. One of the main difficulties is that several of our arguments rely on uniform bounds, for instance on the Hessian, that need to hold simultaneously for all vectors $x \in \bbR^n$ with high probability. But the CDP model bears much less randomness to exploit for establishing such bounds with reasonable sampling complexity bounds. Whether this is possible or not is an open problem that we leave to future research.

\subsection{Paper organization}
The rest of the paper is organized as follows. In Section~\ref{sec:deter}, we describe the mirror descent algorithm with backtracking and establish its global and local convergence guarantees in the deterministic case. We then turn to the case of random measurements in Section~\ref{sec:rand} where we provide sample complexity bounds for the deterministic guarantees to hold with high probability. Section~\ref{sec:numexp} is devoted to the numerical experiments. The proofs of technical results are collected in the appendix.

\section{Deterministic Phase Retrieval}\label{sec:deter}
\paragraph*{\textbf{Notations}}
We denote $\pscal{\cdot,\cdot}$ the scalar product and $\normm{\cdot}$ the corresponding norm. $B(x,r)$ is the corresponding ball of \tcb{radius $r$} centered at $x$ and $\mathbb{S}^{n-1}$ is the corresponding unit sphere. For $m \in \N^*$, we use the shorthand notation $\tcb{\bbrac{m}}=\{1,\ldots,m\}$.  The $i$-th entry of a vector $x$ is denoted $x[i]$. Given a matrix $M$, $\transp{M}$ is its transpose and $\adj{M}$ is its adjoint (transpose conjugate). Let $\lambda_{\min}(M)$ and $\lambda_{\max}(M)$ be respectively the smallest and the largest eigenvalues of $M$. For two real symmetric matrices $M$ and $N$, $M \slon N$ if $M-N$ is positive semidefinite.
$\inte$ is the interior of a set. We denote by $\Gamma_0(\bbR^n)$ the class of proper lower semicontinuous convex function. $\dom(f)$ is the domain of the function $f$. \tcb{$f^*$ denotes the Legendre-Fenchel conjugate of $f$.} Recall that the set of critical points of $f \in C^1(\bbR^n)$ is $\crit (f) = \enscond{x \in \bbR^n}{\nabla f(x)=0}$.

Let us denote the set of true vectors by $\xoverline{\calX}=\{\pm\avx\}$. For any vector $x\in\bbR^n$, the distance to the set of true vectors is  
\begin{equation}
\dist(x,\xoverline{\calX})\eqdef \min\pa{\normm{x-\avx},\normm{x+\avx}} .
\end{equation}
We will also use the shorthand notation: $A$ is the $m \times n$ matrix with $\adj{a_r}$'s as its rows.

\tcb{
\begin{remark}
Our limitation of the set of true solutions to $\{\pm\avx\}$ may appear restrictive since even for real vectors, the equivalence class is much larger than what we are allowing. First, note that our deterministic convergence results in Theorem~\ref{Maintheo} apply at any global minimizer. Moreover, our restriction will be justified in the oversampling regime with random measurements. For instance, for Gaussian measurements, only $\{\pm\avx\}$ are provably global minimizers for large enough number of measurements. Moreover, for the two types of random measurements in Section~\ref{sec:rand}, spectral initialization also provides an initialization which is real and provably lies in the neighborhood of $\{\pm\avx\}$.
\end{remark}
}

\subsection{Bregman toolbox}
For any $\phi:\bbR^n \to ]-\infty,+\infty]$ such that $\phi \in \Gamma_0(\bbR^n) \cap C^1(\inte(\dom(\phi)))$, we define a proximity measure associated with $\phi$.
\begin{definition}\label{Brgdf}\textbf{(Bregman divergence)} 
The Bregman divergence associated with $\phi$ is defined as $D_\phi:\bbR^n\times \bbR^n\rightarrow ]-\infty,+\infty]$:
\begin{equation}\label{Bregmdef}
D_{\phi}(x,u)\eqdef 
\begin{cases} 
\phi(x) - \phi(u) + \pscal{\nabla \phi(u),x - u} &\si ~ (x,u)\in \dom(\phi)\times \inte(\dom(\phi)),\\
 +\infty & \odwz. 
\end{cases}
\end{equation}
\end{definition} 
The classical euclidean distance is generated by the energy entropy $\phi(x)=\frac{1}{2}\normm{x}^2$. More examples of entropies and associated Bregman divergences can be found in \cite{bolte_first_2017,Teboulle18}. Clearly, $D_\phi$ is not a distance (it is not symmetric in general for example). 

We now collect some of the properties of the Bregman divergence \tcb{that will be useful} in our context. See \cite{teboulle_entropic_1992,chen_convergence_1993} and \cite[Proposition~2.10]{bauschke_dykstras_2000} for the last claim.
\begin{proposition}\textbf{(Properties of the Bregman divergence)}\label{pp:bregman}
\begin{enumerate} [label=(\roman*)]
\setlength{\itemindent}{0.42cm}
\item $D_\phi$ is nonnegative if and only if $\phi$ is convex. If in addition $\phi$ is strictly convex, $D_\phi$ vanishes if and only if its arguments are equal. \label{pp:bregman1}
\item \label{pp:bregman2}
Linear additivity: for any $\alpha,\beta \in \bbR$ and any functions $\phi_1$ and $\phi_2$ we have
\begin{equation}\label{linear}
D_{\alpha \phi_1+\beta \phi_2}(x,u)=\alpha D_{\phi_1}(x,u) + \beta D_{\phi_2}(x,u),
\end{equation}
for all $(x,u)\in \left(\dom \phi_1\cap\dom \phi_2\right)^2$ such that both $\phi_1$ and $\phi_2$ are differentiable at $u$.
\item \label{pp:bregman3}
The three-point identity: for any $x \in \dom(\phi)$ and $u,z\in \inte(\dom(\phi))$, we have
\begin{equation}\label{3poin}
D_{\phi}(x,z)-D_{\phi}(x,u)-D_{\phi}(u,z)=\pscal{\nabla \phi(u)-\nabla \phi(z),x-u},
\end{equation}
\item \label{pp:bregman4}
Suppose that $\phi$ is also $C^2(\inte(\dom(\phi)))$ and $\nabla^2 \phi(x)$ is positive definite for any $x \in \inte(\dom(\phi))$. Then for every convex compact subset $\Omega\subset\inte(\dom(\phi))$, there exists $0<\theta_{\Omega}\leq\Theta_{\Omega}<+\infty$ such that for all $x,u \in \Omega$,
\begin{equation}\label{striconv}
\frac{\theta_{\Omega}}{2}\normm{x-u}^2\leq D_{\phi}(x,u)\leq\frac{\Theta_{\Omega}}{2}\normm{x-u}^2.
\end{equation}
\end{enumerate} 
\end{proposition}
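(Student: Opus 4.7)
The plan is to handle the four items separately by reducing to standard tools. For (i), I would expand $D_\phi(x,u) \ge 0$ as $\phi(x) \ge \phi(u) + \pscal{\nabla \phi(u),x-u}$ for every admissible pair $(x,u)$: this is precisely the first-order (tangent supporting) characterization of convexity of a differentiable function on $\inte(\dom(\phi))$, hence nonnegativity of $D_\phi$ is equivalent to convexity of $\phi$. Strict inequality whenever $x\neq u$ then characterizes strict convexity, giving the second claim.

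For (ii), I would use linearity of the gradient at points of differentiability: if both $\phi_1$ and $\phi_2$ are differentiable at $u$, then so is $\alpha\phi_1 + \beta\phi_2$ with gradient $\alpha\nabla\phi_1(u) + \beta\nabla\phi_2(u)$, and \eqref{linear} follows by direct substitution into Definition~\ref{Brgdf}. For (iii), I would write out $D_\phi(x,z) - D_\phi(x,u) - D_\phi(u,z)$ from the definition: the $\phi(x)$ terms cancel, both occurrences of $\phi(u)$ cancel, both occurrences of $\phi(z)$ cancel, and the surviving linear terms are $\pscal{\nabla\phi(u),x-u} - \pscal{\nabla\phi(z),x-z} + \pscal{\nabla\phi(z),u-z}$, which collapse to $\pscal{\nabla\phi(u)-\nabla\phi(z),\,x-u}$ after adding and subtracting $u$ in the second slot of the third term.

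For (iv), I would invoke Taylor's theorem with integral remainder of order two. Since $\Omega\subset\inte(\dom(\phi))$ is convex and $\phi\in C^2$ there, for any $x,u \in \Omega$ the segment $[u,x]$ lies inside $\Omega$, and the definition of $D_\phi$ rearranges into
\begin{equation*}
D_\phi(x,u) = \int_0^1 (1-t)\,\pscal{\nabla^2\phi(u+t(x-u))(x-u),\,x-u}\,dt.
\end{equation*}
Setting $\theta_\Omega \eqdef \min_{z\in\Omega}\lambda_{\min}(\nabla^2\phi(z))$ and $\Theta_\Omega \eqdef \max_{z\in\Omega}\lambda_{\max}(\nabla^2\phi(z))$, both extrema are attained and finite by continuity of $\nabla^2\phi$ on the compact set $\Omega$, and pointwise positive definiteness forces $\theta_\Omega > 0$. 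Sandwiching the integrand between $\theta_\Omega\normm{x-u}^2$ and $\Theta_\Omega\normm{x-u}^2$ and using $\int_0^1 (1-t)\,dt = 1/2$ yields \eqref{striconv}. The only mildly delicate step is the strict positivity of $\theta_\Omega$, which combines pointwise positive definiteness with continuity and compactness; otherwise the argument is essentially computational throughout.
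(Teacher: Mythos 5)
Your proof is correct and is precisely the standard argument that the paper itself defers to by citation (it offers no proof of this proposition beyond pointing to \cite{teboulle_entropic_1992,chen_convergence_1993} and \cite[Proposition~2.10]{bauschke_dykstras_2000}); in particular, the Taylor integral-remainder identity you use for item (iv), combined with continuity and compactness to get $0<\theta_\Omega\leq\Theta_\Omega<+\infty$, is the same device the paper deploys in its proof of Lemma~\ref{Bregcomp}. One minor remark: you have (rightly) read Definition~\ref{Brgdf} with the conventional minus sign in front of $\pscal{\nabla \phi(u),x - u}$ --- the plus sign as printed is a typo, since otherwise neither the nonnegativity claim in (i) nor the Euclidean example would hold.
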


We are now ready to extend the gradient Lipschitz continuity property to the Bregman setting, that we coin relative smoothness. The notion of relative smoothness is key to the analysis of differentiable but not Lipschitz-smooth optimization problems. The earliest reference to this notion can be found in an economics paper \cite{birnbaum2011distributed} where it is used to address a problem in game theory involving fisher markets. Later on it was developed in \cite{bauschke_descent_2016,bolte_first_2017} and then in \cite{lu2018relatively}, although first coined relative smoothness in \cite{lu2018relatively}.

\begin{definition}\label{smoothadaptable}\textbf{($L-$relative smoothness)}
Let $\phi \in \Gamma_0(\bbR^n) \cap C^1(\inte(\dom(\phi)))$, and $g$ be a proper and lower semicontinuous function such that $\dom(\phi) \subset \dom(g)$ and $g \in C^1(\inte(\dom(\phi)))$. $g$ is called \tcb{$L-$smooth relative to} $\phi$ on $\inte(\dom(\phi))$ if there exists $L>0$ such that $L\phi-g$ is convex on $\inte(\dom(\phi))$, \ie 
\begin{equation}\label{eq:smoothadaptable}
D_g(x,u) \leq LD_{\phi}(x,u) \qforallq (x,u) \in \dom(\phi) \times \inte(\dom(\phi)) .
\end{equation}
\end{definition}

When $\phi$ is the energy entropy, \ie $\phi=\frac{1}{2}\normm{\cdot}^2$, one recovers the standard descent lemma implied by Lipschitz continuity of the gradient of $g$. \\
  
In an analogous way, we also extend the standard local strong convexity property to a relative version \wrt to an entropy or kernel $\phi$.
\tcb{
\begin{definition}\label{relativeconvex}\textbf{(Local relative strong convexity)}
Let $\phi \in \Gamma_0(\bbR^n) \cap C^1(\inte(\dom(\phi)))$, and $g$ be a proper and lower semicontinuous function such that $\dom(\phi) \subset \dom(g)$ and $g \in C^1(\inte(\dom(\phi)))$. Let $\C$ be a non-empty subset of $\dom(\phi)$. For $\sigma > 0$, we say that $g$ is $\sigma$-strongly convex on $\C$ \tcb{relative} to $\phi$ if 
\begin{equation}
D_g(x,u)\geq \sigma D_{\phi}(x,u) \qforallq x \in \C \tandt u \in \C \cap \inte(\dom(\phi)) .
\end{equation}
\end{definition}
The idea of global (\ie $\C=\dom(\phi)$) relative strong convexity has already been used in the literature, see \eg  \cite[Proposition~4.1]{Teboulle18} and \cite[Definition~3.3]{Bauschke19}. Its local version was first proposed in \cite{Silveti22}. When $\phi$ is the energy entropy (\ie $\phi=\frac{1}{2}\normm{\cdot}^2$), one recovers the standard definition of (local/global) strong convexity. Relation of global relative strong convexity to gradient  dominated inequalities, which is an essential ingredient to prove global linear convergence of mirror descent, was studied in \cite[Lemma~3.3]{Bauschke19}.}  

\subsection{Phase retrieval minimization problem}
In this work, we cast \eqref{GeneralPR} as solving the following optimization problem
\begin{equation}\label{formulepro}
\min_{x\in\mathbb{R}^n}\left\{ f(x)\eqdef\qsom{\loss{y[r]}{|(Ax)[r]|^2}}\right\}.
\end{equation}
Observe that $f \in C^2(\bbR^n)$ but is obviously non-convex. It is also clear that $\nabla f$ is not Lipschitz continuous. This is the main motivation behind considering the framework of Bregman gradient descent. As we will see shortly, $f$ has a relative smoothness property (see Definition~\ref{smoothadaptable} above) with respect to a well-chosen entropy function. In turn, relative smoothness will prove crucial for establishing descent properties of Bregman gradient descent, also known as, mirror descent.

Following \cite{bolte_first_2017}, let us consider the following kernel or entropy function   
\begin{align}\label{KGDpr}
\psi(x)=\frac{1}{4}\normm{x}^4+\frac{1}{2}\normm{x}^2 . 
\end{align}

\tcb{
Recall that a function $\phi \in \Gamma_0(\R^n)$ is Legendre if it is strictly convex and differentiable on $\inte(\dom(\phi)) \neq \emptyset$, with $\normm{\nabla \phi(x_k)} \to +\infty$ for each sequence $\seq{x_k} \subset \inte(\dom(\phi))$ converging to a boundary point of $\dom(\phi)$.
\begin{proposition}\label{prop:psi}
$\psi$ enjoys the following properties:
\begin{enumerate}[label=(\roman*)]\setlength{\itemindent}{0.4cm}
\item $\psi \in C^2(\bbR^n)$, is 1-strongly convex and Legendre.
\item $\nabla \psi$ is Lipschitz over bounded subsets of $\bbR^n$. 
\item $\nabla \psi$ is a bijection from $\bbR^n$ to $\bbR^n$, and its inverse is $\nabla \psi^*$.
\end{enumerate}
\end{proposition}
The first two claims are easy to show. The last one follows from \cite[Theorem~26.5]{rockafellar_convex_1970}.
}

It turns out that the objective $f$ in \eqref{formulepro} is smooth \tcb{relative} to the entropy $\psi$ defined in \eqref{KGDpr} on the whole space $\bbR^n$. This is stated in the following result whose proof is provided in Appendix~\ref{PrTsmad}.
\begin{lemma}\label{Tsmad}
Let $f$ and $\psi$ as defined in \eqref{formulepro} and \eqref{KGDpr} respectively. $f$ is \tcb{$L$-smooth relative} to $\psi$ on $\bbR^n$ for any $L\geq \som{3\normm{a_r}^4}$. 
\end{lemma}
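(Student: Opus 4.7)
The plan is to prove convexity of $L\psi-f$ on $\bbR^n$ directly from the Hessian characterisation. Since $\psi,f\in C^2(\bbR^n)$, this reduces to showing
\begin{equation*}
L\,\nabla^2\psi(x)\;\succeq\;\nabla^2 f(x)\qquad\forall\,x\in\bbR^n.
\end{equation*}
The strategy is to compute both Hessians in closed form, crudely upper bound the quadratic form associated to $\nabla^2 f(x)$ via two applications of Cauchy--Schwarz, and lower bound the one associated to $\nabla^2\psi(x)$ by keeping only its isotropic part.

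Differentiating $\psi(x)=\tfrac14\normm{x}^4+\tfrac12\normm{x}^2$ directly gives $\nabla\psi(x)=(1+\normm{x}^2)x$ and $\nabla^2\psi(x)=(1+\normm{x}^2)I+2x\adj{x}\succeq\normm{x}^2 I$. For $f$, since $y[r]$ is a constant in $x$ and the gradient of $y[r]-(\adj{a_r}x)^2$ is $-2(\adj{a_r}x)a_r$, applying the product rule to each summand of $f$ yields, up to the overall scalar factor implicit in the definition of $f$,
\begin{equation*}
\nabla^2 f(x)\;=\;\sum_{r=1}^m\bigl[3(\adj{a_r}x)^2-y[r]\bigr]\,a_r\adj{a_r}.
\end{equation*}
The factor $3$ here is the algebraic origin of the $3$ in the statement.

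Since $y[r]=(\adj{a_r}\avx)^{2}\geq 0$, the subtracted piece $-\sum_r y[r]a_r\adj{a_r}$ is negative semidefinite and can be dropped when upper-bounding; two Cauchy--Schwarz inequalities then yield, for every $v\in\bbR^n$,
\begin{equation*}
\adj{v}\nabla^2 f(x)v\;\leq\;3\sum_{r=1}^m(\adj{a_r}x)^2(\adj{a_r}v)^2\;\leq\;3\,\normm{x}^2\normm{v}^2\sum_{r=1}^m\normm{a_r}^4.
\end{equation*}
Combined with $\adj{v}\nabla^2\psi(x)v\geq\normm{x}^2\normm{v}^2$, this shows
\begin{equation*}
L\,\adj{v}\nabla^2\psi(x)v\;\geq\;L\normm{x}^2\normm{v}^2\;\geq\;\adj{v}\nabla^2 f(x)v
\end{equation*}
as soon as $L\geq 3\sum_{r=1}^m\normm{a_r}^4$, which is exactly the claimed bound.

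No serious obstacle arises; the argument is nothing more than a Hessian upper bound matched against the quartic-plus-quadratic growth built into $\psi$. This matching is of course the whole point of the Bauschke--Bolte--Teboulle kernel design for quartic objectives. The only place deserving a brief remark is that the nonpositive contribution $-y[r]a_r\adj{a_r}$ in $\nabla^2 f$ helps rather than hinders the upper bound, so no absolute value is needed in the relative-smoothness inequality.
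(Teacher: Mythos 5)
Your proof is correct and follows essentially the same route as the paper's: drop the negative semidefinite $-y[r]\,a_r\adj{a_r}$ contribution, bound $\adj{v}\nabla^2 f(x)v$ by $3\normm{x}^2\normm{v}^2\sum_r\normm{a_r}^4$ via Cauchy--Schwarz, and compare against the isotropic lower bound $\nabla^2\psi(x)\succeq\normm{x}^2\Id$. The only cosmetic difference is that the paper passes from the Hessian ordering to the Bregman-divergence inequality through an explicit integral comparison lemma, whereas you invoke the equivalent characterisation of relative smoothness as convexity of $L\psi-f$; these are the same argument.
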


This estimate of of the modulus of relative smoothness $L$ in Lemma~\ref{Tsmad} is rather crude but has the advantage to not depend on the measurements $y$. A far sharper estimate will be provided in the case where the sensing vectors are random; see Section~\ref{sec:rand}.


\subsection{Mirror descent with backtracking}
We recall the following mapping closely related to the Bregman gradient descent. For all $x\in \bbR^n$ and any step-size $\gamma >0$, 
\begin{equation}\label{DProx}
T_{\gamma}(x)\eqdef\argmin\limits_{u \in\bbR^n}\left\{\pscal{\nabla f(x),u-x}+\frac{1}{\gamma}D_{\psi}(u,x) \right\}.
\end{equation}
The pair $(f,\psi)$ defined in \eqref{formulepro}-\eqref{KGDpr} satisfies \cite[Assumptions $A,B,C,D$]{bolte_first_2017} (in fact $\psi$ is even strongly convex \tcb{in our case}). Therefore, it is straightforward to see that $T_{\gamma}$ is a well-defined and single-valued on $\bbR^n$; see \cite[Lemma~3.1]{bolte_first_2017}. \tcb{Moreover, by virtue of Proposition~\ref{prop:psi}, letting $x^+=T_{\gamma}(x)$, the first order optimality condition for $\eqref{DProx}$ reads}
\begin{align}\label{MDalgo}
x^{+}=F(x) \eqdef \nabla\psi^{-1}\paren{\nabla\psi(x)-\gamma\nabla f(x)}=\nabla\psi^*\paren{\nabla\psi(x)-\gamma\nabla f(x)} .
\end{align}

Our mirror descent (or Bregman gradient descent) scheme with backtracking is summarized in Algorithm~\ref{alg:MDBT}.

\begin{algorithm}[H]
 \caption{Mirror Descent for Phase Retrieval}
 \label{alg:MDBT}
    \textbf{Parameters:} $L_0 = L$ (see Lemma~\ref{Tsmad}), $\kappa \in ]0,1[$, $\xi\geq1$ . \\
    \textbf{Initialization:} $x_0\in\bbR^n$\;
    \For{$k=0,1,\ldots$}{
    \Repeat{$D_{f}(\xkp,\xk) > L_{k}D_{\psi}(\xkp,\xk)$}{
    $\lk \leftarrow \lk/\xi$, 
    $\gak=\frac{1-\kappa}{\lk}$ \\
    $\xkp=F(\xk)=\nabla\psi^{*}\paren{\nabla\psi(\xk)-\gak\nabla f(\xk)}$ \\
    }
    $\lk \leftarrow \xi\lk$, $\gak=\frac{1-\kappa}{\lk}$ \\
    $\xkp=F(\xk)$.
    }
\end{algorithm}    
    
Observe that Algorithm~\ref{alg:MDBT} cannot be trapped in the second loop thanks to Lemma~\ref{Tsmad}. The version without backtracking is recovered by setting $\xi=1$ and using constant step-size verifying $\gamma \in ]0,1/L[$ where $L$ is the global relative smoothness coefficient. Backtracking for an inertial version of the Bregman proximal gradient algorithm was used in \cite{mukkamala_convex-concave_2020}. 

It remains now to compute the mirror step. This amounts to finding a root of a third-order polynomial. 
\begin{proposition}\textbf{(Mirror step computation)}{\cite[Proposition~5.1]{bolte_first_2017}}\label{prop:mirrorstep}
Let $x\in\bbR^n$ and $p_{\gamma}(x)=\nabla\psi(x)-\gamma\nabla f(x)$. Then computing \eqref{MDalgo} amounts to 
\begin{equation}
x^+=t^*p_{\gamma}(x), 
\end{equation}
where  $t^*$ is the unique real positive root of $t^3\normm{p_{\gamma}(x)}^2+t-1=0$.
\end{proposition}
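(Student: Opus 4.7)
The plan is to unfold the definition of the mirror update and exploit the radial structure of the entropy $\psi$.

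First I would compute $\nabla\psi$ explicitly. Since $\psi(x)=\tfrac14\|x\|^4+\tfrac12\|x\|^2$, a direct computation gives
\begin{equation}
\nabla\psi(x)=(\|x\|^2+1)x .
\end{equation}
From \eqref{MDalgo}, the update $x^+=F(x)$ is characterized by $\nabla\psi(x^+)=p_{\gamma}(x)$, i.e.
\begin{equation}
(\|x^+\|^2+1)\,x^+ = p_{\gamma}(x) .
\end{equation}

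The key structural observation is that the left-hand side is a nonnegative scalar multiple of $x^+$. Hence $x^+$ must be collinear with $p_{\gamma}(x)$, and the scalar factor $\|x^+\|^2+1 \ge 1 > 0$ is positive. Writing $x^+ = t\,p_{\gamma}(x)$ with $t\in\bbR$, the equation becomes
\begin{equation}
(t^2\|p_{\gamma}(x)\|^2+1)\,t\,p_{\gamma}(x) = p_{\gamma}(x) .
\end{equation}
If $p_{\gamma}(x)=0$, then the relation $\nabla\psi(x^+) = 0$ combined with the injectivity of $\nabla\psi$ (which follows from Property~\ref{propertent}, since $\psi$ is strictly convex) yields $x^+=0$, and indeed the cubic reduces to $t-1=0$, so $t^\ast = 1$ and $x^+ = 1\cdot 0 = 0$. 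Otherwise we may divide by $p_{\gamma}(x)$ in a chosen nonzero coordinate (or equivalently take inner product with $p_{\gamma}(x)$ and divide by $\|p_{\gamma}(x)\|^2$) to obtain the scalar cubic
\begin{equation}
t^3 \|p_{\gamma}(x)\|^2 + t - 1 = 0 .
\end{equation}

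It remains to argue that this cubic has a unique real positive root. Set $h(t) = t^3\|p_{\gamma}(x)\|^2 + t - 1$. Then $h(0)=-1<0$ and $h(t)\to +\infty$ as $t\to +\infty$, so by the intermediate value theorem a positive real root exists. Moreover,
\begin{equation}
h'(t) = 3t^2\|p_{\gamma}(x)\|^2 + 1 \ge 1 > 0,
\end{equation}
so $h$ is strictly increasing on $\bbR$; in particular the root is unique (and is automatically the only real root). Combining these two steps yields $x^+ = t^\ast p_{\gamma}(x)$ with $t^\ast$ as stated.

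I do not expect any substantial obstacle here: the only subtlety is ruling out the degenerate case $p_{\gamma}(x)=0$ and justifying that the scalar $t$ must be positive, both of which follow immediately from the explicit form $\nabla\psi(x)=(\|x\|^2+1)x$ and from $\nabla\psi$ being a bijection (itself a consequence of Property~\ref{propertent} and Legendreness of $\psi$, already invoked just before the proposition).
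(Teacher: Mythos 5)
Your argument is correct and is essentially the standard derivation behind the cited result: the paper itself gives no proof, merely referring to \cite[Proposition~5.1]{bolte_first_2017}, and your computation (explicit form of $\nabla\psi$, collinearity of $x^+$ with $p_{\gamma}(x)$, reduction to the scalar cubic, and strict monotonicity of $t\mapsto t^3\normm{p_{\gamma}(x)}^2+t-1$ giving existence and uniqueness of the positive root) is exactly the intended reasoning. The handling of the degenerate case $p_{\gamma}(x)=0$ and the positivity of the scalar factor are both properly justified, so there is nothing to add.
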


\subsection{Deterministic recovery guarantees by mirror descent}
We pause to recall two notions that will be important in our convergence result.
\begin{definition}\label{AssumptionE}\textbf{($f$-attentive neighborhood)}
A point $u\in\bbR^n$ belongs to an $f$-attentive neighborhood of $x \in \bbR^n$, if there exist $\delta > 0$ and $\mu>0$ such that $u \in B(x,\delta)$ and $f(x)<f(u)<f(x)+\mu$.
\end{definition}

\begin{definition}\label{defa}\textbf{(Strict saddle points)}
A point $\xpa\in\crit(f)$ is a strict saddle point of $f$ if \linebreak $\lambda_{\min}(\nabla^2f(\xpa))< 0$. The set of strict saddle points of $f$ is denoted $\strisad(f)$.    
\end{definition}

We are now ready to state our main convergence result.
{
\begin{theorem}\label{Maintheo}
Let $\seq{\xk}$ be a bounded sequence generated by Algorithm~\ref{alg:MDBT} for the phase retrieval problem \eqref{GeneralPR}. Then,
    \begin{enumerate}[label=(\roman*)]
    \setlength{\itemindent}{0.45cm}
    \item \label{point-i} the sequence $\seq{f(\xk)}$ is non-increasing,
    \item \label{point-ii} the sequence $\seq{\xk}$ has a finite length and converges to a point in $\crit(f)$.
    \item  \label{point-iii} Let $r > 0$. Assume that the initial point $\xo$ is in the $f$-attentive neighborhood of $\xsol \in \Argmin(f) \neq \emptyset$, \ie $\exists \delta \in ]0,r[$ and $\mu > 0$ such that $\xo\in B(\xsol,\delta)$ and $f(\xo) \in ]0,\mu[$, then
    \begin{enumerate}[label=(\alph*)]
     \item \label{point-iii-a}  $\forall k\in \bbN,\xk \in B(\xsol,r)$, and $\xk$ converges to a global minimizer of $f$.
     \item \label{point-iii-b} Besides, if $\exists \rho>0$ such that $f$ is $\sigma$-strongly convex \tcb{on $B(\xsol,\rho)$ relative to $\psi$}, with $r\leq\frac{\rho}{\max\pa{\sqrt{\Theta(\rho)},1}}$, where we recall $\Theta(\rho)$ from Proposition~\ref{pp:bregman}\ref{pp:bregman4}, then $\forall k \in \bbN$
     \tcb{
     \begin{align}\label{loclinear}
      \normm{\xk - \xsol}^2 \leq \Ppa{\prod_{i=0}^{k-1}\frac{1-\sigma\gamma_i}{1+\sigma\gamma_i\Theta(\rho)^{-1}}}\rho^2 \to 0 .
     \end{align}
     }
    \end{enumerate}
    \item \label{point-iv} If $L_k = L$, then for Lebesgue almost all initializers $\xo$, the sequence $\seq{\xk}$ converges to an element in $\crit(f)\bsl\strisad(f).$
    \end{enumerate}   
\end{theorem}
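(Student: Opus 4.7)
The plan is to prove the four assertions in order, with Lemma~\ref{Alllinone} as the central workhorse.

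For \ref{point-i}, I would set $u=\xk$ in Lemma~\ref{Alllinone}. Since $D_\psi(\xk,\xk)=0$ and $D_\psi(\xkp,\xk)\geq 0$, the inequality collapses to $\gak\bigl(f(\xkp)-f(\xk)\bigr) \leq -\kappa D_\psi(\xkp,\xk) \leq 0$, giving monotonicity. Summing over $k$ and using $f\geq 0$ yields $\sum_{k}D_\psi(\xkp,\xk)<+\infty$, and on the compact closure of the bounded trajectory Proposition~\ref{pp:bregman}\ref{pp:bregman4} upgrades this to $\sum_{k}\normm{\xkp-\xk}^{2}<+\infty$; in particular $\normm{\xkp-\xk}\to 0$.

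For \ref{point-ii}, I would invoke the abstract Kurdyka-Lojasiewicz template. Since $f$ is a degree-$4$ polynomial it is semi-algebraic and enjoys the KL property on any compact set. The descent from \ref{point-i} supplies the sufficient-decrease condition. For the relative-error condition, the first-order optimality of the mirror step gives the identity $\nabla\psi(\xkp)-\nabla\psi(\xk)=-\gak\nabla f(\xk)$; combining this with Property~\ref{propertent} (Lipschitz continuity of $\nabla\psi$ on bounded sets) together with the local Lipschitz continuity of $\nabla f$ on the compact hull of $\seq{\xk}$ yields a bound of the form $\normm{\nabla f(\xkp)}\leq C\normm{\xkp-\xk}$, where the back-tracking rule keeps $\gak$ bounded away from $0$. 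The standard KL convergence theorem then delivers finite length and convergence of $\seq{\xk}$ to a point in $\crit(f)$.

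For \ref{point-iii}, the strategy is to apply Lemma~\ref{Alllinone} with $u=\xsol$, using $f(\xsol)=0\leq f(\xkp)$. To prove \ref{point-iii-a}, I would show by induction that $D_\psi(\xsol,\xk)$ never exceeds its initial value: the hypothesis $\xo\in B(\xsol,\delta)$ with $\delta<r$ caps $D_\psi(\xsol,\xo)$ via Proposition~\ref{pp:bregman}\ref{pp:bregman4}, and the lower bound in the same proposition then confines every $\xk$ to $B(\xsol,r)$; the potentially signed term $-\gak D_f(\xsol,\xk)$ in the telescope is controlled by a local Taylor expansion of $f$ at the minimizer $\xsol$ (where $\nabla f(\xsol)=0$) for $\mu$ small enough. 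To prove \ref{point-iii-b}, relative strong convexity gives $D_f(\xsol,\xk)\geq\sigma D_\psi(\xsol,\xk)$ whenever $\xk\in B(\xsol,\rho)$; substituting into Lemma~\ref{Alllinone} and discarding the non-positive residuals yields the one-step contraction $D_\psi(\xsol,\xkp)\leq(1-\sigma\gak)D_\psi(\xsol,\xk)$. The constraint $r\leq\rho/\max\bigl(\sqrt{\Theta(\rho)},1\bigr)$ combined with Proposition~\ref{pp:bregman}\ref{pp:bregman4} guarantees the iterates never leave $B(\xsol,\rho)$, so the recursion propagates and, through the Bregman-versus-squared-norm comparison, delivers the advertised linear rate \eqref{loclinear}.

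The genuinely hard part is \ref{point-iv}, which I would handle by adapting the stable-manifold argument of Lee et al.\ to the mirror descent map. With $L_k\equiv L$ constant, $F=\nabla\psi^{*}\circ(\nabla\psi-\gamma\nabla f)\in C^{1}(\bbR^{n})$, and at any $\xpa\in\crit(f)$ (which is a fixed point of $F$) the Jacobian simplifies to $DF(\xpa)=I-\gamma\nabla^{2}\psi(\xpa)^{-1}\nabla^{2}f(\xpa)$. Conjugating by $\nabla^{2}\psi(\xpa)^{1/2}$ shows $DF(\xpa)$ shares its spectrum with the symmetric matrix $I-\gamma\nabla^{2}\psi(\xpa)^{-1/2}\nabla^{2}f(\xpa)\nabla^{2}\psi(\xpa)^{-1/2}$; for $\xpa\in\strisad(f)$, Sylvester's law of inertia combined with $\lambda_{\min}(\nabla^{2}f(\xpa))<0$ forces this matrix to have an eigenvalue of modulus strictly greater than one. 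Moreover, the $L$-relative smoothness of $f$ \wrt $\psi$ implies $L\nabla^{2}\psi(x)\slon\nabla^{2}f(x)$ for every $x$, which together with $\gamma L<1$ makes $\nabla^{2}\psi(x)-\gamma\nabla^{2}f(x)$ positive definite, so $DF$ is everywhere invertible and $F$ is a $C^{1}$ local diffeomorphism on $\bbR^{n}$. The center-stable manifold theorem then provides, around each $\xpa\in\strisad(f)$, a local stable set that is a $C^{1}$ embedded submanifold of positive codimension, hence Lebesgue-negligible. A Lindel\"of covering of $\strisad(f)$ together with the diffeomorphism property of $F$ (pre-images under $F^{-k}$ preserve null sets) globalizes this into a null set of initializers whose iterates converge to $\strisad(f)$. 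Combined with \ref{point-ii} this establishes \ref{point-iv}. The most delicate bookkeeping lies in the globalization of the local stable manifolds via a countable covering and in verifying that the margin $\gamma L<1$ provided by the backtracking rule is enough to make $DF$ invertible uniformly in $x$.
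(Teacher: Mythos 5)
Your handling of \ref{point-i}, \ref{point-ii}, \ref{point-iii}\ref{point-iii-b} and \ref{point-iv} is essentially the paper's own proof: sufficient decrease from Lemma~\ref{Alllinone} with $u=\xk$, the Kurdyka--\L{}ojasiewicz template for finite length and convergence to a critical point (the paper routes this through \cite{bolte_first_2017}), the one-step contraction $D_{\psi}(\xsol,\xkp)\leq(1-\sigma\gamma_k)D_{\psi}(\xsol,\xk)$ under local relative strong convexity combined with Proposition~\ref{pp:bregman}\ref{pp:bregman4} to keep the iterates in $B(\xsol,\rho)$, and for \ref{point-iv} the same two facts about $F$ as in Lemma~\ref{lem:strictsadeig} (invertibility of $\deriv F$ everywhere because $\nabla^2\psi(x)-\gamma\nabla^2f(x)\slon\kappa\nabla^2\psi(x)\succ0$, and an eigenvalue of modulus greater than one at each strict saddle via conjugation by $\nabla^2\psi(\xpa)^{1/2}$), followed by the centre stable manifold theorem of \cite{Lee19}.

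The genuine gap is in \ref{point-iii}\ref{point-iii-a}, where you replace the paper's appeal to the KL ``capture'' theorem (\cite[Theorem~2.12]{attouch_convergence_2013}) by a direct induction on $D_{\psi}(\xsol,\xk)$. Lemma~\ref{Alllinone} with $u=\xsol$ and $f(\xkp)\geq f(\xsol)=0$ gives $D_{\psi}(\xsol,\xkp)\leq D_{\psi}(\xsol,\xk)-\gamma_k D_f(\xsol,\xk)$, but for the nonconvex $f$ the term $D_f(\xsol,\xk)$ has no sign, and claim \ref{point-iii-a} assumes no local convexity (that hypothesis is reserved for \ref{point-iii-b}). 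The most a generic Taylor argument yields, using $\nabla^2f(\xsol)\slon0$ and continuity of $\nabla^2f$, is $D_f(\xsol,\xk)\geq-\tfrac{\varepsilon}{2}\normm{\xk-\xsol}^2\geq-\varepsilon D_{\psi}(\xsol,\xk)$ on a small ball, which turns your recursion into $D_{\psi}(\xsol,\xkp)\leq(1+\gamma_k\varepsilon)D_{\psi}(\xsol,\xk)$; that factor compounds over infinitely many iterations, so the induction that the iterates remain in $B(\xsol,r)$ does not close. Shrinking $\mu$ does not repair this, since $\mu$ bounds $f(\xo)$ and controls neither $\varepsilon$ nor the number of steps spent near $\xsol$. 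Moreover, even granting confinement, you must still show that the limit (a critical point by \ref{point-ii}) is a global minimizer and not merely a critical point with value below $\mu$. Both issues are precisely what the KL route resolves: the desingularizing function bounds the length of the entire trajectory by a quantity of the form $M(\varphi(f(\xo))+\normm{\xo-\xsol})$, which is smaller than $r-\delta$ for $\delta$ and $\mu$ small, and simultaneously pins the limiting value of $f(\xk)$ at $\min f$. You should either invoke that theorem as the paper does, or exploit the specific quartic structure of $f$ to prove that $D_f(\xsol,\cdot)\geq0$ on a neighbourhood of $\xsol$ (this does hold here, but it requires an explicit computation with the measurements rather than a Taylor bound) together with a separate argument identifying the limit as a global minimizer.
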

    See Section~\ref{PrMaintheo} for the proof.
}
    {   
    \begin{remark}\label{rem:deterministic} {\ }
    \begin{itemize}
     \setlength{\itemindent}{0.2cm}
      \item A standard assumption that automatically guarantees the boundedness of the sequence $\seq{\xk}$, hence its convergence to a critical point, is coercivity of $f$. Since the latter is a composition of a coercive function (a positive quartic function) and the linear operator $A$ (recall that its rows are the $a_r^*$'s), coercivity of $f$ amounts to injectivity of $A$. This is exactly what we will show in the random case when $m$ is large enough.
      \item  It is clear that $\Argmin(f)\neq \emptyset$ since $\overline{\calX} \subset \Argmin(f)$ and the claim \ref{point-iii} applies at $\pm\avx$ in which case one has exact recovery up to a global sign.
      \item A close inspection at the proof of Proposition~\ref{pp:bregman}\ref{pp:bregman4} shows that \linebreak$\Theta(\rho) = \sup_{x \in B(\avx,\rho)} \normm{\nabla^2 \psi(x)}$ does the job. In view of \eqref{hessent}, it is easy to see that $\Theta(\rho) \leq 6\normm{\avx}^2+6\rho^2+1$.
      \item Claim \ref{point-iii} shows local linear convergence of $\xk$ to $\xsol$. Indeed, $\sigma \leq L_k$ for any $k$, and thus $1-\sigma \gamma_k \in ]\kappa,1[$.
     \item  Clearly, claim \ref{point-iv} states that when the initial point is selected according to a distribution which has a density \wrt the Lebesgue measure, then the sequence $(\xk)_{k \in \bbN}$ converges to a point that avoids strict saddle points of $f$. This is a consequence of the centre stable manifold theorem applied to our mirror descent algorithm. 
     \item When it will come to the phase retrieval problem from random measurements (see forthcoming section), in order to prove local linear convergence, the key argument will be to show that for a sufficient number of measurements, then \whp $f$ is \tcb{strongly convex around $\pm \avx$ relative to $\psi$}.
     

\end{itemize}
\end{remark}
}

\section{Random Phase Retrieval via Mirror Descent}\label{sec:rand}
\subsection{Framework}\label{framework}
Throughout the paper, we will work under two random measurement models:
\begin{enumerate}[label=(\arabic*)]
\item \label{framework-1}The sensing vectors are drawn \iid following a (real) standard Gaussian distribution. We can then rewrite the observation data as 
\begin{equation}\label{Noisy-PR}
y[r]=|\transp{a_r}\avx|^2, \quad r\in\tcb{\bbrac{m}},
\end{equation}
where $(a_r)_{r \in \tcb{\bbrac{m}}}$ are {\iid} $\calN(0,1)$.

\item \label{framework-2} The Coded Diffraction Patterns (CDP) model, as considered for instance in \cite{candes_phase_2015}. The idea is to modulate the signal before diffraction in the case of the Fourier transform measurements. The observation model is then 
\begin{eqnarray}\label{PRCDP}
y=\Ppa{|\calF(D_p\avx)[j]|^{2}}_{j,p}=\Ppa{\left|\sum_{\ell=0}^{n-1}\avx_{\ell}d_p[\ell] e^{-i\frac{2\pi j\ell}{n}}\right|^2}_{j,p}.
\end{eqnarray}
where $j\in\{0,\ldots,n-1\}$ and $p\in\{0,\ldots,P-1\}$, $D_p$ is a real diagonal matrix with the modulation pattern $d_p$ on its diagonal, and $\calF$ is the discrete Fourier transform. $P$ is the number of coded patterns/masks and the total number of measurements is then $m=nP$. The  modulation patterns  $(d_p)_{p \in [P]}$ are \iid copies of the same random vector $d$ satisfying the following assumption:
\begin{assumption}\label{assum:CDP}{\ }
 \begin{enumerate}[label=(A.\arabic*)]
  \item $d$ is symmetric and $\exists M>0$ such that $|d|\leq M$.
  \item Moments conditions: $\esp{d}=0$ and  $\esp{d^4}=2\esp{d^2}^2$. Without loss of generality, we assume $\esp{d^2}=1$. 
 \end{enumerate} 
\end{assumption}
For example, we can take ternary random variables with values in $\ens{-1, 0, 1}$ with probabilities $\ens{\frac{1}{4},\frac{1}{2},\frac{1}{4}}$. We refer to \cite{candes_phase_2015} for other modulation patterns. 
\end{enumerate}
\medskip

When the number of measurements is large enough for both measurements models, we will be able to establish local convergence properties of Algorithm~\ref{alg:MDBT} provided it is initialized with a good guess. For this, we use a spectral initialization method; see for instance \cite{Candes_WF_2015,chen_solving_2017,netrapalli_phase_2015,zhang_nonconvex_2017,wang_solving_2017,waldspurger_phase_2018}. The procedure consists of taking $\xo$ as the leading  eigenvector of a specific matrix as described in Algorithm~\ref{alg:algoSPIniit}.

\begin{algorithm}[H]\caption{Spectral Initialization.}
\label{alg:algoSPIniit}
\KwIn{$y[r], r=1,\ldots,m$.}
\KwOut{$\xo$}
Set $\lambda^2=n\frac{\sum_ry[r]}{\sum_r\normm{a_r}^2}$\;\BlankLine
Take $\xo$ the top eigenvector of $Y=\som{y[r]a_r\adj{a_r}}$ normalized to $\normm{\xo}=\lambda$.
\label{Spini}
\end{algorithm}

\tcb{
\begin{remark}
Assuming random measurements models and using probabilistic arguments to get sample complexity bounds and understand fundamental limits of phase retrieval (and other inverse problems) is an established technique in the applied mathematics literature. Of course, we are aware that this might not always be realistic from an application perspective as it may sometimes involve changing the data measurements to fit the theory. Nonetheless, for the application we have in mind (precision in optics), the CDP measurement model seems reasonable. This is the subject of an ongoing work.
\end{remark}
}

We are now ready to state our main results for each measurement model.

{ 
\subsection{Gaussian measurements}
Before stating our result, we consider the following events which will be helpful in our proofs. For this, we fix $\vrho \in ]0,1[$ and $\lambda \in ]0,1[$.
\begin{itemize}
\item The event 
\begin{equation}\label{eq:crit_char_G}
\calE_{\rm strictsad} = \ensB{\crit(f)=\overline{\calX}\cup\strisad(f)} 
\end{equation}
means that the set of critical points of the function $f$ is reduced to $\{\pm \avx\}$ and the set of strict saddle points. 
 
\item The event 
\begin{equation} \label{eq:uniconcen_G}
\calE_{\rm conH} = \left\{\forall x\in\bbR^n,\quad\normm{\nabla^2f(x)-\esp{\nabla^2f(x)}} \leq \vrho\para{\normm{x}^2+\normm{\avx}^2/3}\right\}  
\end{equation}
captures the deviation of the Hessian of $f$ around its expectation.

\item The event
\begin{equation}\label{eq:injectivity_G}
\calE_{\rm inj} =\left\{\forall x\in\bbR^n,\quad \paren{1-\vrho}\normm{x}^2\leq \frac{1}{m}\normm{A x}^2\right\}
\end{equation}
represents injectivity of the measurement matrix $A$.

\item $\calE_{\rm smad}$ is the event on which the function $f$ is $L$-smooth \tcb{relative to} $\psi$ in the sense of Definition~\ref{smoothadaptable}, with $L=3+\vrho\max\pa{\normm{\avx}^2/3,1}$. 

\item $\calE_{\rm scvx}$ is the event on which $f$ is $\sigma$-strongly convex \tcb{on $B(\xoverline{\calX},\rho)$ relative to} $\psi$ in the sense of Definition~\ref{relativeconvex}, with $\sigma=(\lambda\min\pa{\normm{\avx}^2,1}-\vrho\max\pa{\normm{\avx}^2/3,1})$ and $\rho=\frac{1-\lambda}{\sqrt{3}}\normm{\avx}$. 


\item We end up by denoting
\begin{equation}\label{eq:e1_conv_G}
\calE_{\rm conv}=\calE_{\rm strictsad}\cap\calE_{\rm conH}\cap\calE_{\rm inj}\cap\calE_{\rm smad}\cap\calE_{\rm scvx} .
\end{equation}
\end{itemize}

Our main result for Gaussian measurements is the following. 
\begin{theorem}\label{thm:Gauss}
Fix $\lambda \in ]0,1[$ and $\vrho \in]0,\lambda\min\pa{\normm{\avx}^2,1}/(2\max\pa{\normm{\avx}^2/3,1})[$. Let $\seq{\xk}$ be the sequence generated by Algorithm~\ref{alg:MDBT}.
\begin{enumerate}[label=(\roman*)]
\setlength{\itemindent}{0.35cm}
\item \label{thm:Gauss-i} If the number of measurements $m$ is large enough, \ie $m\geq C(\vrho)n\log^3(n)$, then for almost all initializers $x_0$ of Algorithm~\ref{alg:MDBT} used with constant step-size $\gamma_k \equiv \gamma = \frac{1-\kappa}{3+\vrho\max\pa{\normm{\avx}^2/3,1}}$, for any $\kappa \in ]0,1[$, we have
\[
\dist(\xk,\overline{\calX}) \to 0 ,
\] 
and $\exists K \geq 0,$ large enough such that $\forall k \geq K,$
\begin{align}\label{conv_lin_G}
\dist^2(\xk,\overline{\calX}) \leq \paren{1-\nu}^{k-K}\rho^2 ,
\end{align}
where
\begin{align}\label{conv_rate_G}
\nu = \frac{\para{1-\kappa}\Ppa{\lambda\min\pa{\normm{\avx}^2,1}-\vrho\max\pa{\normm{\avx}^2/3,1}}}{3+\vrho\max\pa{\normm{\avx}^2/3,1}} .
\end{align}
This holds with a probability at least $1-2e^{-\frac{m\pa{\sqrt{1+\vrho}-1}^2}{8}}-5e^{-\zeta n}-4/n^2-c/m$, where $C(\vrho),c$ and $\zeta$ are numerical positive constants.
\item \label{thm:Gauss-ii} \tcb{Suppose moreover that $\vrho$ obeys 
\begin{equation*}
\vrho\leq\eta_1^{-1}\Ppa{\frac{1-\lambda}{\sqrt{3\Ppa{6(1+(1-\lambda)^2/3)+1}}}\frac{1}{\max\Ppa{\normm{\avx},1}}} ,
\end{equation*}
where $\eta_1$ is the function defined in \eqref{radius}.}
When $m\geq C(\vrho)n\log(n)$, if Algorithm~\ref{alg:MDBT} is initialized with the spectral method in Algorithm~\ref{alg:algoSPIniit}, then with probability at least $1-2e^{-\frac{m\pa{\sqrt{1+\vrho}-1}^2}{8}}-5e^{-\zeta n}-4/n^2$ ($\zeta$ is a fixed numerical constant), \eqref{conv_lin_G} holds for all $k \geq K=0$.
\end{enumerate}
\end{theorem}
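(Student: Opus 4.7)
My plan is to view Theorem~\ref{thm:Gauss} as a probabilistic specialization of the deterministic result in Theorem~\ref{Maintheo}. I would introduce the ``good'' event $\calE_{conv}$ and show that on this event all the hypotheses of Theorem~\ref{Maintheo} are met with the explicit constants announced in the statement; the proof then splits into two essentially independent tasks: (a) transfer the deterministic conclusions into the claimed linear-convergence statement with the rate \eqref{conv_rate_G}, and (b) control the probability of $\calE_{conv}$.

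\textbf{Part \ref{thm:Gauss-i}: deterministic consequences on $\calE_{conv}$.} First, on $\calE_{inj}$ the measurement map $A$ is injective, and since the per-sample loss grows like $|\pscal{a_r,x}|^4$, the objective $f$ is coercive; Theorem~\ref{Maintheo}\ref{point-i}--\ref{point-ii} therefore gives boundedness of $\seq{\xk}$ and convergence to some $\xpa \in \crit(f)$. On $\calE_{smad}$ we may take the constant step-size $\gamma_k \equiv (1-\kappa)/L$ with $L=3+\vrho\max(\normm{\avx}^2/3,1)$, so Theorem~\ref{Maintheo}\ref{point-iv} applies and rules out strict saddles for Lebesgue-almost every $\xo$. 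Combined with the characterization $\crit(f)=\overline{\calX}\cup\strisad(f)$ on $\calE_{strictsad}$, this forces $\xpa\in\overline{\calX}$ and hence $\dist(\xk,\overline{\calX})\to 0$. For $K$ large enough one has $\xk\in B(\overline{\calX},\rho)$, at which point $\calE_{scvx}$ triggers Theorem~\ref{Maintheo}\ref{point-iii-b}; unrolling $\prod_{i=0}^{k-K-1}(1-\sigma\gamma)$ with the announced $\sigma$ and $\gamma$ yields exactly the contraction factor $1-\nu$ in \eqref{conv_rate_G} after a short algebraic simplification.

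\textbf{Controlling the probability of $\calE_{conv}$.} The main technical obstacle is the uniform-in-$x$ Hessian concentration event $\calE_{conH}$, since $\nabla^2 f(x)$ depends on $x$ through the random scalars $\pscal{a_r,x}$ and $\pscal{a_r,\avx}$. I would compute $\esp{\nabla^2 f(x)}$ explicitly using Gaussianity, control deviations at a fixed $x$ by a Hanson--Wright / matrix Bernstein inequality, and then propagate uniformly over $x\in\bbR^n$ by combining positive homogeneity with a chaining / $\epsilon$-net argument on $\mathbb{S}^{n-1}$; this is precisely where the $\log^3(n)$ factor in the sampling complexity is paid. Events $\calE_{smad}$ and $\calE_{scvx}$ then follow from $\calE_{conH}$ by sandwiching $\nabla^2 f$ between $\sigma\nabla^2\psi$ and $L\nabla^2\psi$ on the appropriate sets, using $\nabla^2\psi(x)=(3\normm{x}^2+1)I-2xx^{\top}+\cdots$; the event $\calE_{inj}$ reduces to a lower chi-square tail handled by a standard net argument (giving the $e^{-m(\sqrt{1+\vrho}-1)^2/8}$ term). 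Finally, the characterization defining $\calE_{strictsad}$ is obtained by inserting $\nabla f(x)=0$ into the uniform Hessian concentration: the sign/eigenvalue structure of $\esp{\nabla^2 f(x)}$ shows that any critical point $x\notin \overline{\calX}$ admits a direction of negative curvature, which in turn is what requires $m\gtrsim n\log^3(n)$ and contributes the $c/m$ term; a union bound over all these events yields the probability estimate in~\ref{thm:Gauss-i}.

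\textbf{Part \ref{thm:Gauss-ii}: spectral initialization.} A standard analysis of Algorithm~\ref{alg:algoSPIniit}---matrix concentration for $Y=\sum_r y[r] a_r a_r^{\top}$ around $\esp{Y}=\normm{\avx}^2 I + 2\avx\avx^{\top}$, combined with a Davis--Kahan $\sin\Theta$ bound---shows that $\dist(\xo,\overline{\calX})\leq \rho$ with the stated probability as soon as $m\gtrsim n\log(n)$, provided $\vrho$ satisfies the quantitative bound \eqref{eq:spectralinitvrhobnd}. On the more restricted event $\calE_{inj}\cap\calE_{smad}\cap\calE_{scvx}$, which only requires $m\gtrsim n\log(n)$ (no global saddle-avoidance or critical-point characterization is needed because the iterates are initialized inside the basin), Theorem~\ref{Maintheo}\ref{point-iii-b} applies already at $k=0$ and delivers \eqref{conv_lin_G} with $K=0$, while the union bound loses the $c/m$ contribution present in part~\ref{thm:Gauss-i}.
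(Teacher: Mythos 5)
Your overall architecture is the same as the paper's: condition on the good event $\calE_{conv}$, use injectivity for coercivity/boundedness, relative smoothness for convergence to a critical point, the centre-stable-manifold claim of Theorem~\ref{Maintheo}\ref{point-iv} to avoid strict saddles, the critical-point characterization to land in $\overline{\calX}$, and local relative strong convexity to trigger Theorem~\ref{Maintheo}\ref{point-iii-b}; the probability is then a union bound over $\calE_{strictsad}\cap\calE_{conH}\cap\calE_{inj}$ (the paper notes $\calE_{smad},\calE_{scvx}\subset\calE_{conH}$). Part \ref{thm:Gauss-ii} is likewise handled as you describe, via the spectral initialization lemma.

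The genuine gap is your treatment of $\calE_{strictsad}$. You claim the characterization $\crit(f)=\overline{\calX}\cup\strisad(f)$ is ``obtained by inserting $\nabla f(x)=0$ into the uniform Hessian concentration.'' That argument does not close: Hessian concentration around $\esp{\nabla^2 f(x)}$ only tells you that points near the \emph{population} saddles inherit negative curvature; it says nothing about where the critical points of the \emph{empirical} objective actually lie. To rule out spurious empirical critical points far from $0$, $\pm\avx$ and the population saddle sphere, one also needs uniform concentration of the gradient and a delicate region-by-region analysis of $\bbR^n$ (showing, e.g., that in certain regions $\pscal{\nabla f(x),x}$ or a directional second derivative is bounded away from zero). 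This is precisely the landscape result of Sun--Qu--Wright, which the paper invokes as \cite[Theorem~2.2]{sun_geometric_2018} rather than reproving, and it is this ingredient --- not the Hessian net argument --- that costs $m\gtrsim n\log^3(n)$ and the $c/m$ probability term. Relatedly, your statement that the uniform Hessian concentration ``is precisely where the $\log^3(n)$ factor is paid'' is inaccurate: in the paper $\calE_{conH}$ holds already for $m\geq C(\vrho)n\log(n)$ (Lemma~\ref{lem:conhess_G}), which is exactly why part \ref{thm:Gauss-ii}, which does not need $\calE_{strictsad}$, gets away with the weaker sampling bound. Also note the minor slip $\nabla^2\psi(x)=(\normm{x}^2+1)\Id+2x\transp{x}$, not $(3\normm{x}^2+1)\Id-2x\transp{x}$.
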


Before proving our result, the following remarks are in order.
\begin{remark} {\ }
\begin{itemize}
 \setlength{\itemindent}{0.2cm}
\item In the regime of claim~\ref{thm:Gauss-i}, when $x_0$ is chosen uniformly at random, Algorithm~\ref{alg:MDBT} provably converges to the true vector $\avx$ up to a sign change. In this case any initialization strategy becomes superfluous, though the number of measurements required then is slightly (polylogarithmically) higher than with spectral initialization.

\item In the regime of of claim~\ref{thm:Gauss-ii}, one has to use a spectral initialization to find a good initial guess, from which mirror descent converges locally linearly to $\avx$ up to global sign change.  

\item When the true vector norm is one, as assumed in many works, the convergence rate takes the simple form $\para{1-\frac{\para{1-\kappa}\pa{\lambda-\vrho}}{3+\vrho}} \leq \frac{2}{3} + O((1-\lambda)+\kappa+\vrho)$.

\item \tcb{The convergence rate $1-\nu$ as given in \eqref{conv_lin_G}-\eqref{conv_rate_G} can be slightly improved as we did in  \eqref{loclinear} (here we dropped the denominator in \eqref{loclinear}).} It is also important to point out that our convergence rate is independent from the dimension $n$ of the signal. This is in contrast with the Wirtinger flow \cite{Candes_WF_2015,candes_phase_2015}, whose convergence rate is $\paren{1-\frac{\mathrm{cst}}{n}}$ and thus dimension-dependent. Such dependence was removed for the truncated Wirtinger flow with Gaussian measurements \cite{chen_solving_2017}.
\end{itemize}

To close these remarks, we strongly believe that handling the geometry of the problem through the framework of mirror/Bregman gradient descent with a wisely chosen entropy/kernel $\psi$ is a key for this better behaviour in our case.
\end{remark}

\begin{proof}
$~$\\\vspace*{-0.25cm}
\begin{enumerate}[label=(\roman*)]
\setlength{\itemindent}{0.3cm}
\item 
Assume for this claim that $\calE_{\rm conv}$ holds true; we will show later that this is indeed the case \whp when the number of measurements is as large as prescribed. The proof then consists in combining Theorem~\ref{Maintheo} and the characterization of the structure of $\crit(f)$.
\begin{itemize}
\item Global convergence of the iterates: under event $\calE_{\rm inj}$ (see \eqref{eq:injectivity_G}), the sequence $\seq{\xk}$ generated by Algorithm~\ref{alg:MDBT} is bounded; see the discussion in Remark~\ref{rem:deterministic}.
Since $\calE_{\rm smad}$ holds, Theorem~\ref{Maintheo}\ref{point-i}-\ref{point-ii} ensure that the sequence $\seq{\xk}$ converges to $\xsol \in \crit(f)$ and the induced sequence $\seq{f(\xk)}$ converges to $f(\xsol)$.

\item Since $\calE_{\rm strictsad}$ holds also, we have by Theorem~\ref{Maintheo}\ref{point-iv} that for almost all initial points $x_0$, the sequence $\seq{\xk}$ converges to an element of $\crit(f)\bsl\strisad(f)=\overline{\calX}$. We assume \wlogp that $\xk \to \avx$ whence $\normm{\xk-\avx} \to 0$, and $f(\xk) \to \min(f)=0$. Therefore, for $\eta\leq\frac{\rho}{\sqrt{\max(\Theta(\rho),1)}}$, there exists $\exists K=K(\eta)$ such that, 
\begin{equation}
\forall k\geq K, \normm{\xk-\avx} < \eta \qandq f(\xk) \in ]0,\eta[, 
\end{equation} 
\ie for $k\geq K$, $\xk$ is in an $f$-attentive neighborhood of $\avx$. 

\item Thanks to $\calE_{\rm scvx}$, $f$ is $\sigma$-strongly convex \tcb{on $B(\avx,\rho)$ relative to $\psi$} with $\sigma$ and $\rho$ as given in that event. It then follows from Theorem~\ref{Maintheo}\ref{point-iii} that, $\forall k>K$ and $\gak \equiv \frac{\para{1-\kappa}}{3+\vrho\max\pa{\normm{\avx}^2/3,1}}$, we have 
\begin{align*}
D_{\psi}(\avx,\xkp)
&\leq \paren{1-\nu}D_{\psi}(\avx,\xk)\\
&\leq \paren{1-\nu}^{k-K}D_{\psi}(\avx,x_K).     
\end{align*}
Moreover by \eqref{striconv} and $1$-strong convexity of $\psi$, for all $k \geq K$
\begin{align*}
\dist^2(\xk,\overline{\calX})\leq\normm{\xk-\avx}^2\leq 2D_{\psi}(\avx,\xk)&\leq \paren{1-\nu}^{k-K}\Theta(\rho)\eta^2,\\
& \leq \paren{1-\nu}^{k-K} \rho^2.
\end{align*}
\end{itemize}

To conclude this part of the proof we need to compute the probability that the event $\calE_{\rm conv}$ occurs. 
We have, 
\begin{align*}
  \calE_{\rm conv}&=\calE_{\rm strictsad}\cap\calE_{\rm conH}\cap\calE_{\rm inj}\cap\calE_{\rm smad}\cap\calE_{\rm scvx},\\
  &= \calE_{\rm strictsad}\cap\calE_{\rm conH}\cap\calE_{\rm inj},
\end{align*}
since $\calE_{\rm smad}\subset\calE_{\rm conH}$ and $\calE_{\rm scvx}\subset\calE_{\rm conH}$  thanks to Lemma~\ref{pro:Lsmad_G} and Lemma~\ref{pro:Localconvex_G} respectively. Owing to Lemma~\ref{lem:conhess_G}, the event $\calE_{\rm conH}$ holds true with  a probability at least $1-5e^{-\zeta n}-\frac{4}{n^2}$, where $\zeta$ is a fixed numerical constant, with the proviso that $m \geq C(\vrho)n\log(n)$. 

On the other hand, Lemma~\ref{pro:injectivity_G} tells us that, when $m\geq\frac{16}{\vrho^2}n,$ the event $\calE_{\rm inj}$ is true with a probability at  least $1-2e^{-\frac{m\pa{\sqrt{1+\vrho}-1}^2}{8}}$. The study of the critical points of the objective $f$, see \cite[Theorem~2.2]{sun_geometric_2018}, shows that when $m\geq C(\vrho)n\log^3(n)$, the event $\calE_{\rm strictsad}$ holds true with a probability $1-\frac{c}{m}$ (where $c$ a fixed numerical constant). Using a union bound, $\calE_{\rm conv}$ occurs with the stated high probability provided that $m \geq C(\vrho)n\log(n)$ for a large enough numerical constant $C(\vrho)$.

\item The proof of this claim is similar to the last part of claim~\ref{thm:Gauss-i} except that now, we invoke Lemma~\ref{pro:spectralinit}\ref{pro:spectralinit-iii} to see that with probability at least at least $1-2e^{-\frac{m\pa{\sqrt{1+\vrho}-1}^2}{8}}-5e^{-\zeta n}-\frac{4}{n^2}$, the initial guess $\xo$ obtained by spectral initialization belongs to $B\Ppa{\overline{\calX},\frac{\rho}{\sqrt{\max(\Theta(\rho),1)}}}$. We can now follow the reasoning in the last item of the proof of statement~\ref{thm:Gauss-i} to conclude. 
\end{enumerate}
\end{proof}

\subsection{CDP measurements} 
Our main result for the CDP measurements model is the following. 
%

\begin{theorem}\label{thm:CDP}
Let $\vrho \in]0,1[$ and $\seq{\xk}$ be the sequence generated by Algorithm~\ref{alg:MDBT}.
\begin{enumerate}[label=(\roman*)]
\setlength{\itemindent}{0.3cm}
\item \label{thm:CDP_i} If the number of patterns $P$ satisfies $P\geq C(\vrho)\log(n)$, then with a probability at least $1-1/n^2$, for almost all initializers $x_0$ of Algorithm~\ref{alg:MDBT} used with constant step-size $\gamma_k \equiv \gamma = \frac{1-\kappa}{L}$, for any $\kappa \in ]0,1[$ and $L$ given by Lemma~\ref{Tsmad}, $\seq{\xk}$ converges to an element in $\crit(f)\bsl\strisad(f)$.

\item \label{thm:CDP_ii} Let $\delta\in]0,\min(\normm{\avx}^2,1)/2[$. There exists $\rho_{\delta} > 0$ such that if $\vrho$ is small enough (\ie it satisfies \eqref{eq:spectralinitvrhobnd_cdp}) and $P \geq C(\vrho)n\log^3(n)$, and if Algorithm~\ref{alg:MDBT} is initialized with the spectral method in Algorithm~\ref{alg:algoSPIniit}, then with probability at least $1-\frac{4P+1}{n^3}-\frac{1}{n^2}$
\begin{align}\label{conv_lin_cdp}
\dist^2(\xk,\overline{\calX}) \leq \prod_{i=0}^{k-1}(1-\nu_i) \rho_\delta^2, \quad \forall k \geq 0 ,
\end{align}
where 
\begin{align}\label{conv_rate_cdp}
\nu_i = \frac{\Ppa{1-\kappa}\Ppa{\min\pa{\normm{\avx}^2,1}-2\delta}}{(1+\delta)L_i} .
\end{align}

%
\end{enumerate}
\end{theorem}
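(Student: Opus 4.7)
The plan is to follow the template of Theorem~\ref{thm:Gauss}, adapting the random events to the CDP model. Introduce four events: an injectivity event $\calE_{inj}^{\textup{CDP}}$ asserting that $\frac{1}{m}\normm{Ax}^2 \geq (1-\vrho)\normm{x}^2$ for all $x$; a relative smoothness event $\calE_{smad}^{\textup{CDP}}$ ensuring that Lemma~\ref{Tsmad} holds with a controlled constant $L$; a local relative strong convexity event $\calE_{scvx}^{\textup{CDP}}$ on the ball $B(\overline{\calX},\rho_\delta)$ around the true vectors; and an initialization event $\calE_{init}$ certifying that Algorithm~\ref{alg:algoSPIniit} produces $x_0\in B(\overline{\calX},\rho_\delta)$. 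The overall strategy is to verify these events with the stated probabilities and then invoke Theorem~\ref{Maintheo}.

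For claim~\ref{thm:CDP_i}, first establish $\calE_{inj}^{\textup{CDP}}$ by noting that $\adj{A}A = n\sum_p D_p^2$ with $\esp{D_p^2}=I_n$ under Assumption~\ref{assum:CDP}, so a matrix Chernoff/Bernstein inequality on the sum of $P$ independent bounded diagonal matrices gives injectivity (hence coercivity of $f$, hence boundedness of $\seq{\xk}$) with probability at least $1-1/n^2$ once $P\geq C(\vrho)\log(n)$. Relative smoothness with a constant $L$ follows deterministically from Lemma~\ref{Tsmad} together with $|d_p|\leq M$. Applying Theorem~\ref{Maintheo}\ref{point-i}-\ref{point-ii} yields convergence of $\seq{\xk}$ to $\crit(f)$, and Theorem~\ref{Maintheo}\ref{point-iv}, which requires the constant step-size $\gamma_k\equiv(1-\kappa)/L$, rules out strict saddles for Lebesgue almost every initializer. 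Note that, unlike the Gaussian case, we do not try to characterize $\crit(f)$.

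For claim~\ref{thm:CDP_ii}, adapt the matrix Bernstein analysis of the spectral initialization from \cite{candes_phase_2015}: with $P\geq C(\vrho)n\log^3(n)$ patterns, the matrix $Y=\sum_r y[r]a_r\adj{a_r}$ concentrates around $\esp{Y}$, and a perturbation argument for its leading eigenvector yields $x_0\in B(\overline{\calX},\rho_\delta)$ with probability at least $1-(4P+1)/n^3$ whenever $\vrho$ satisfies \eqref{eq:spectralinitvrhobnd_cdp} and $\rho_\delta$ is taken $\leq \rho_{\textup{sc}}/\sqrt{\max(\Theta(\rho_{\textup{sc}}),1)}$ where $\rho_{\textup{sc}}$ is the radius on which local strong convexity holds. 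Local relative strong convexity itself is obtained by lower-bounding $\esp{\nabla^2 f(x)}\succeq \min(\normm{\avx}^2,1)I_n$ on $B(\overline{\calX},\rho_\delta)$, which uses the fourth-moment identity $\esp{d^4}=2\esp{d^2}^2$ of Assumption~\ref{assum:CDP}, and controlling the deviation $\nabla^2 f(x)-\esp{\nabla^2 f(x)}$ uniformly on this ball. Combined with the backtracking-produced relative smoothness constants $L_i$, this furnishes a modulus $\sigma_i=(\min(\normm{\avx}^2,1)-2\delta)/(1+\delta)$ of relative strong convexity; Theorem~\ref{Maintheo}\ref{point-iii-b} then delivers \eqref{conv_lin_cdp} with rate \eqref{conv_rate_cdp}, and a union bound over $\calE_{init}$, $\calE_{scvx}^{\textup{CDP}}$ and $\calE_{smad}^{\textup{CDP}}$ gives the claimed probability $1-(4P+1)/n^3-1/n^2$.

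The main obstacle, and the key technical departure from the Gaussian case, is the local Hessian concentration required for $\calE_{scvx}^{\textup{CDP}}$. As emphasized in Section~1.3, CDP carries substantially less randomness than Gaussian measurements, so the uniform concentration of $\nabla^2 f$ over all of $\bbR^n$ exploited in $\calE_{conH}$ of Theorem~\ref{thm:Gauss} is out of reach; the analysis must instead be localized. The plan is to build a covering net of $B(\overline{\calX},\rho_\delta)$, apply a matrix Bernstein inequality at each net point, and transfer back to the continuous ball via a Lipschitz estimate on $x\mapsto\nabla^2 f(x)$ that exploits $|d_p|\leq M$. This localized strategy is precisely what forces the higher sample complexity $P\geq C(\vrho)n\log^3(n)$ in claim~\ref{thm:CDP_ii}, as opposed to the much lighter $P\geq C(\vrho)\log(n)$ of the saddle-avoidance regime in claim~\ref{thm:CDP_i}.
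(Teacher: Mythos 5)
Your treatment of claim~\ref{thm:CDP_i} matches the paper: injectivity of $A$ from concentration of the diagonal matrix $\tfrac{1}{m}\adj{A}A$ (the paper uses scalar Hoeffding per diagonal entry plus a union bound, which is equivalent to your matrix Chernoff route), hence coercivity and boundedness, then Theorem~\ref{Maintheo}\ref{point-i}--\ref{point-ii} and \ref{point-iv} with constant step-size. The overall architecture of claim~\ref{thm:CDP_ii} (spectral initialization landing in a ball where $f$ is relatively strongly convex, then Theorem~\ref{Maintheo}\ref{point-iii}) also matches.

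However, there is a genuine gap in how you propose to establish the local relative strong convexity event. You plan to cover $B(\overline{\calX},\rho_\delta)$ with a net, apply matrix Bernstein at each net point, and transfer by a Lipschitz estimate. A net of a ball in $\bbR^n$ has cardinality $e^{\Theta(n)}$, while the CDP Hessian concentration available at a single point only fails with polynomially small probability (the paper's Lemma~\ref{lem:conhess_cdp} gives $1-\tfrac{4P+1}{2n^3}$ for $P\gtrsim\log^3 n$); the union bound over the net therefore cannot be closed at the stated sample complexity and probability. This is exactly the obstruction the paper flags when it says uniform Hessian concentration is out of reach for CDP. The paper's actual argument sidesteps it entirely: it concentrates $\nabla^2 f$ \emph{only at the two points} $\pm\avx$ (requiring just $P\geq C(\delta)\log^3(n)$), deduces $\bigl(\min(\normm{\avx}^2,1)-\delta\bigr)\nabla^2\psi(\avx)\preceq\nabla^2 f(\avx)\preceq(2+\delta)\nabla^2\psi(\avx)$, and then uses plain (deterministic) continuity of $\nabla^2 f$ and $\nabla^2\psi$ to propagate these bounds to some ball $B(\pm\avx,\rho_\delta)$ — which is precisely why $\rho_\delta$ is asserted to exist but is not explicit in the theorem. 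Relatedly, you misattribute the $P\geq C(\vrho)n\log^3(n)$ requirement to the net: in the paper the extra factor of $n$ comes from the spectral initialization analysis (Lemma~\ref{pro:spectralinit_cdp}), not from the Hessian bound, which needs only $\log^3(n)$ patterns.
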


Let us first discuss this result and compare it to the one for Gaussian measurements.
\begin{remark} {\ }
\begin{itemize}
 \setlength{\itemindent}{0.15cm}
\item As far as global recovery guarantees are concerned, Theorem~\ref{thm:CDP}\ref{thm:CDP_i} does not ensure exact recovery of $\pm\avx$. This is in contrast with the Gaussian model where this was established in Theorem~\ref{thm:Gauss}\ref{thm:Gauss-i}. As we pointed out earlier in the introduction section, one of the main difficulties is that several of our arguments in the Gaussian case rely on uniform bounds, for instance on the Hessian and gradient, that need to hold simultaneously for all vectors $x \in \bbR^n$ \whp. Unfortunately, the CDP model enjoys much much less randomness to exploit in the mathematical analysis making this very challenging. Nevertheless, numerical evidence in the next section suggests that global exact recovery (without spectral initialization) holds for the CDP model as well.
  
\item Theorem~\ref{thm:CDP}\ref{thm:CDP_ii} ensures local linear convergence to the true vectors $\pm\avx$ when our algorithm is initialized with the spectral method. The convergence rate is expressed in terms of the step-sizes $\gamma_i=\frac{1-\kappa}{L_i}$, where the $L_i$'s are expected to be much smaller than $L$ in Lemma~\ref{Tsmad}. It is tempting to use $2(1+\delta)^2$, the local relative smoothness constant in \eqref{eq:Lsmad_cdp}, as an upper-bound estimate of the $L_i$'s. But one has to keep in mind that this is valid only locally on $B(\pm \avx,\rho_{\delta})$, and thus one cannot use it when iterating from $\xk$ to $\xkp$. In our numerical experiments, we nevertheless observe that the linear convergence rate in \eqref{conv_rate_cdp} is well estimated by $\Ppa{1-\frac{\Ppa{1-\kappa}\Ppa{\min\pa{\normm{\avx}^2,1}-2\delta}}{2(1+\delta)^3}}$.  When $\normm{\avx} \leq 1$, this rate reads $\Ppa{1-\frac{\Ppa{1-\kappa}\Ppa{1-2\delta}}{2(1+\delta)^3}} \leq \frac{1}{2} + O(\kappa+\delta)$.

\end{itemize}
\end{remark}

\begin{proof}$~$ 
\begin{enumerate}[label=(\roman*)]
\setlength{\itemindent}{0.45cm}
\item 
Under the bound on $P$, we know from Lemma~\ref{pro:injectivity_cdp} that the measurement operator $A$ is injective  with probability at least $1-1/n^2$. On this event, the objective $f$ is coercive, and thus the sequence $\seq{\xk}$ generated by Algorithm~\ref{alg:MDBT} is bounded. Since $f$ is $L$-smooth \tcb{relative to} $\psi$ according to Lemma~\ref{Tsmad}, Theorem~\ref{Maintheo}\ref{point-i}-\ref{point-ii} ensure that the sequence $\seq{\xk}$ converges to $\xsol \in \crit(f)$ and the induced sequence $\seq{f(\xk)}$ converges to $f(\xsol)$. Then using Theorem~\ref{Maintheo}\ref{point-iv} we get the statement.
\item
By Lemma~\ref{pro:spectralinit_cdp}\ref{pro:spectralinit_cdp-iii}, we have that the spectral initialization guess $\xo$ belongs to \linebreak$B\Ppa{\overline{\calX},\frac{\rho_{\delta}}{\sqrt{\max(\Theta(\rho_{\delta}),1)}}}$ with probability larger than $1-\frac{4P+1}{n^3}-\frac{1}{n^2}$. Moreover, we know from Lemma~\ref{pro:Lsmad_cdp} that with probability at least $1-\frac{4P+1}{2n^3}$, $f$ is $\sigma$-strongly convex \tcb{on $B\pa{\overline{\calX},\rho_\delta}$ relative to} $\psi$ with $\sigma=\frac{\Ppa{\min\pa{\normm{\avx}^2,1}-2\delta}}{1+\delta}$. The rest  of the proof follows the same reasoning as in the last item of the proof of statement Theorem~\ref{thm:Gauss}\ref{thm:Gauss-i}. We omit the details. 
\end{enumerate}
\end{proof}
}

\section{Numerical experiments}\label{sec:numexp}
In this section, we discuss some numerical experiments to illustrate the efficiency of our phase recovery algorithm. We use the standard normal Gaussian and we consider the CDP model with a random ternary variable $d$, \ie taking values in $\{-1,0,1\}$ with probability $\{1/4,1/2,1/4\}$. In each instance, we measured the relative error between the reconstructed vector $x$ and the true signal one $\avx$ as
\begin{align}\label{relative_error}
\frac{\dist(x,\overline{\calX})}{\normm{\avx}}.
\end{align}
In the experiments, we set $\normm{\avx}=1$.

\subsection{Reconstruction of 1D signals}

\subsubsection{Gaussian measurements}
The goal is to recover a one-dimensional signal with $n = 128$ from Gaussians measurements. Figure~\ref{reconstruction_gauss}(a) shows the reconstruction result \tcb{from one random instance with} $m = 2 \times 128\times\log^3(128)$ without spectral initialization. Algorithm~\ref{alg:MDBT} was initialized with a vector drawn from the uniform distribution, and used with $600$ iterations and a constant step-size $\gamma=\frac{0.99}{3}$. \tcb{Given the oversampling rate, and as predicted by Theorem~\ref{thm:Gauss}\ref{thm:Gauss-i}, one can observe from Figure~\ref{reconstruction_gauss}(a) that we have exact recovery, and after $\sim 90$ iterations, the iterates enter a linear convergence regime. The "Theoretical error" corresponds to the linear convergence rate predicted by \eqref{conv_lin_G}-\eqref{conv_rate_G}, which is valid for $k$ large enough.}

Figure~\ref{reconstruction_gauss}(b) displays the results for the case where $m = 2 \times 128\times\log(128)$, and Algorithm~\ref{alg:MDBT} was applied with the same parameters as above except that the spectral initialization method was used to get the initial guess. As anticipated by Theorem~\ref{thm:Gauss}\ref{thm:Gauss-ii}, we again have exact recovery with a linear convergence behaviour starting from the initial guess. 

\begin{figure}
\centering
\subfloat[Reconstruction with random initialization]{
    \includegraphics[trim={2cm 0.4cm 0 0.4cm},clip,width=0.49\linewidth]{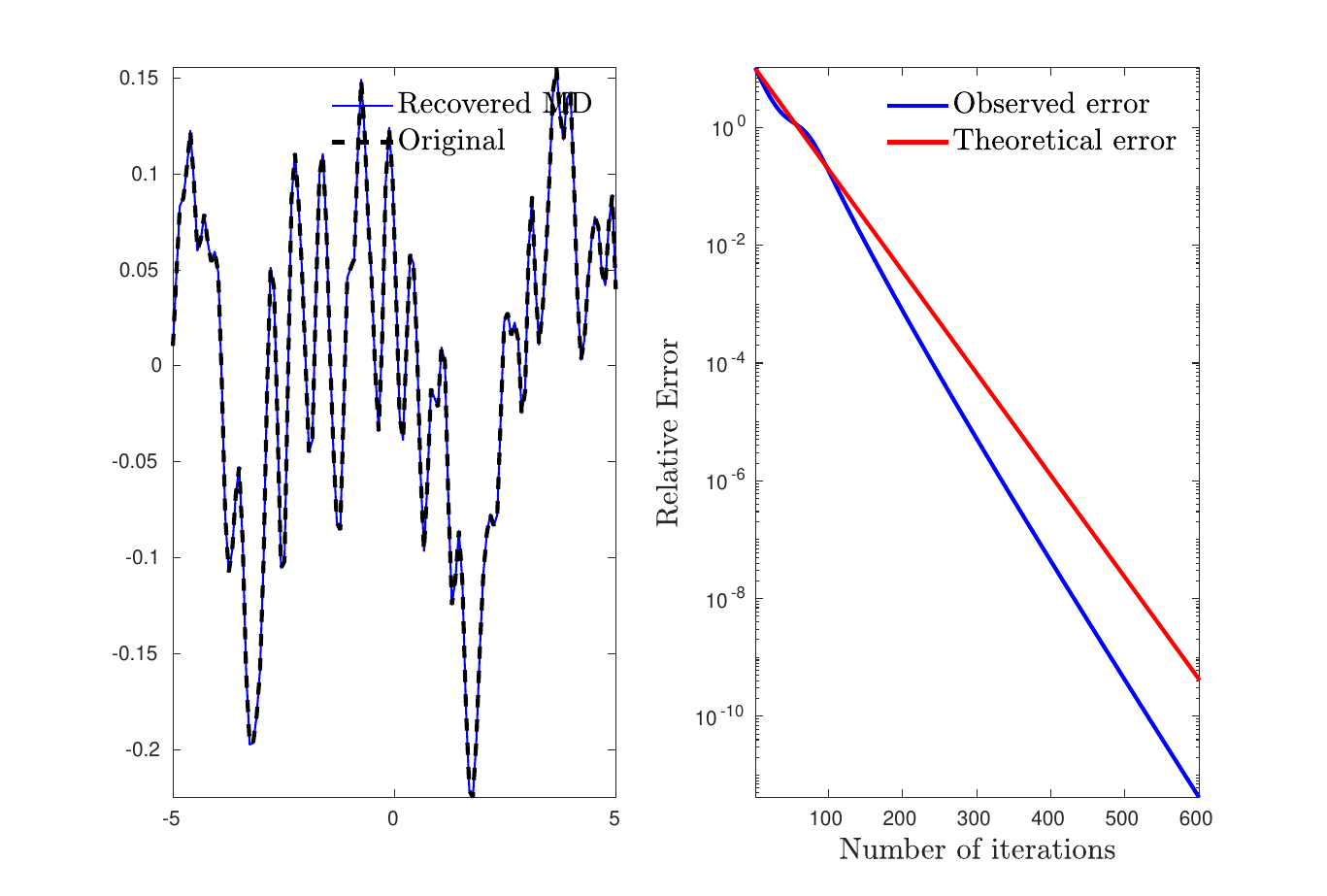}}
    \hspace{-0.5cm}
    \quad
    \subfloat[Reconstruction with spectral initialization]{
    \includegraphics[trim={2cm 0.4cm 0 0.4cm},clip,width=0.49\linewidth]{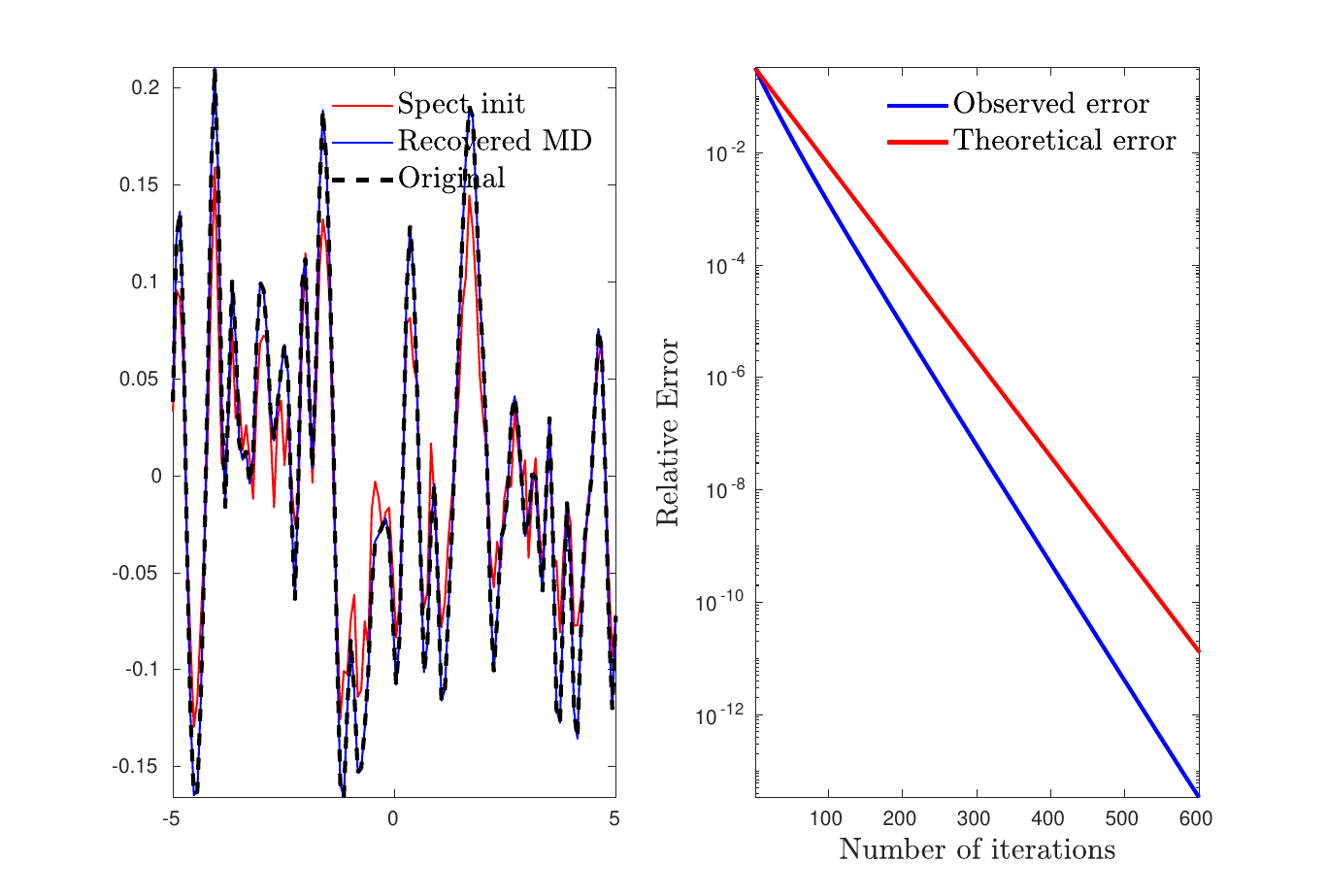}}
 \caption{Reconstruction of a 1D signal by mirror descent from Gaussian measurements.}
\label{reconstruction_gauss}
\end{figure}

\subsubsection{CDP measurements}
We carried out the same experiment with the CDP measurements where we took $P = 7 \times \log^3(128)$ \tcb{ternary random masks}, and set $\gamma=\frac{0.99}{2}$ in mirror descent. The results are shown in Figure~\ref{reconstruction_CDP}. The same conclusions drawn in the Gaussian case remain true for the CDP model. The results with spectral initialization depicted in Figure~\ref{reconstruction_CDP}(b) are in agreement with those of Theorem~\ref{thm:CDP}\ref{thm:CDP_ii}.
As for random uniform initialization, the results of Figure~\ref{reconstruction_CDP}(a) provide numerical evidence that our algorithm enjoys global exact recovery properties, though this is so far not justified by our theoretical analysis.   

\begin{figure}
\centering
\subfloat[Reconstruction with random initialization]{
    \includegraphics[trim={2cm 0.4cm 0 0.4cm},clip,width=0.5\linewidth]{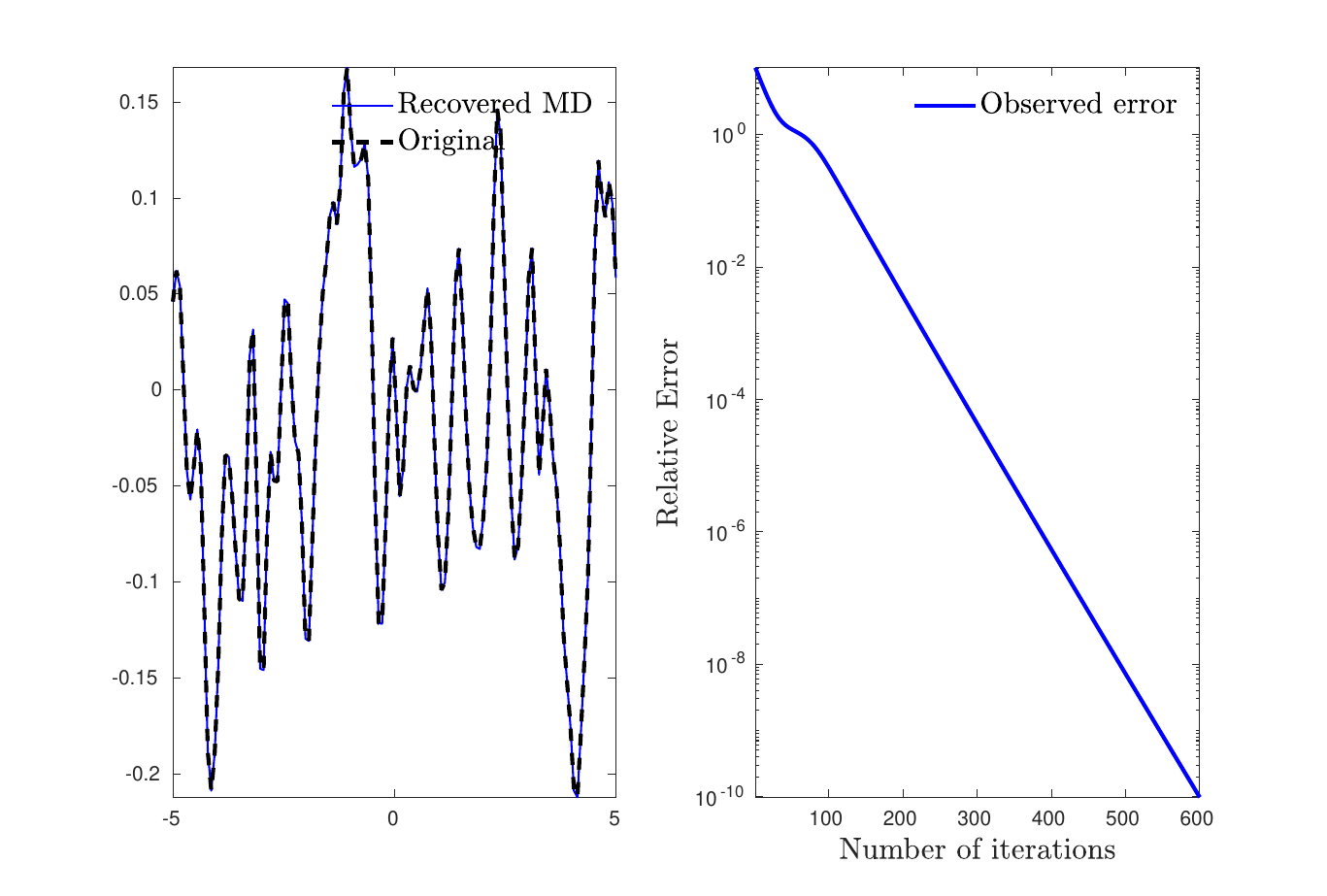}}\hspace{-1cm}\quad
    \subfloat[Reconstruction with spectral initialization]{
    \includegraphics[trim={2cm 0.4cm 0 0.4cm},clip,width=0.5\linewidth]{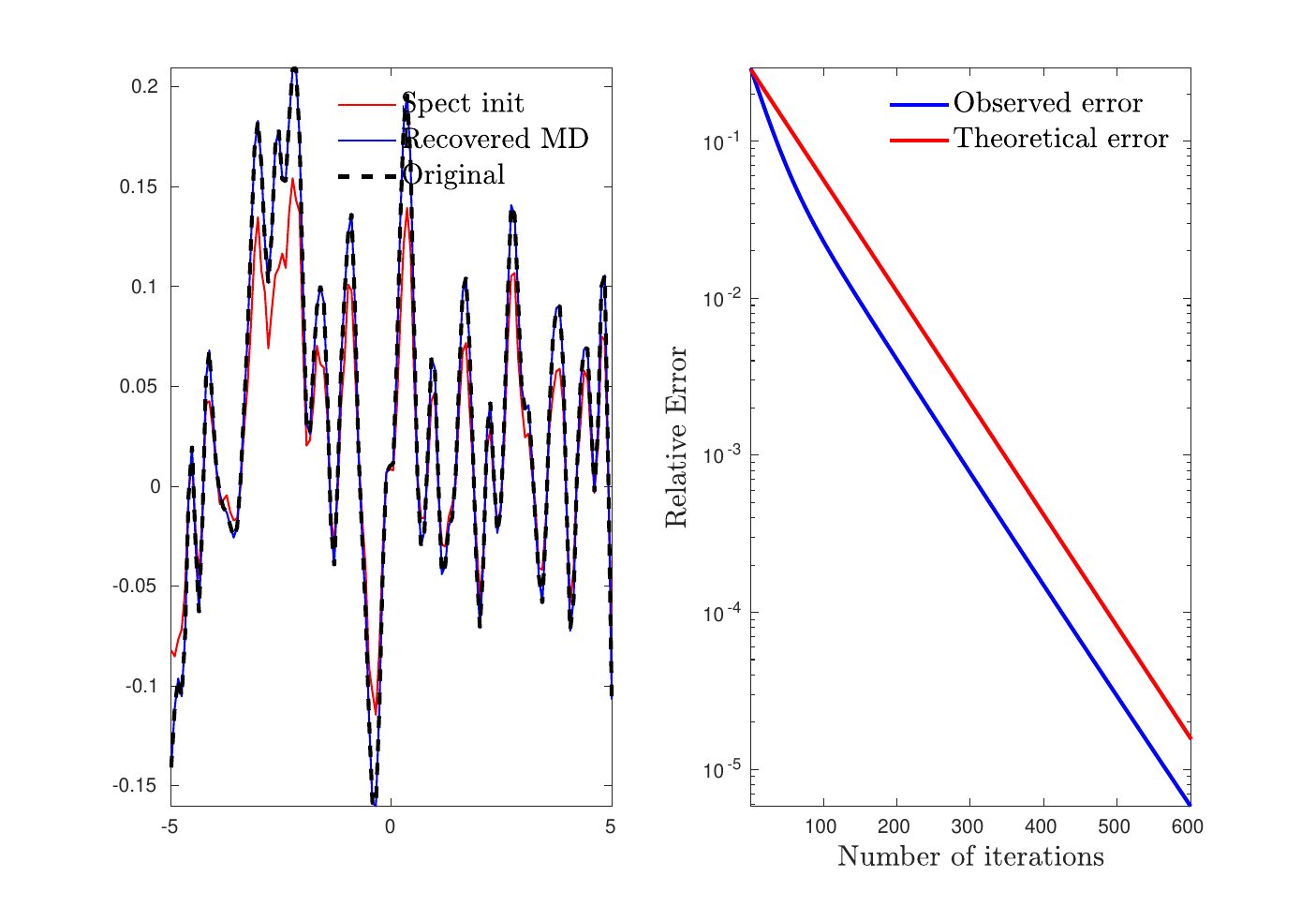}}
    \caption{Reconstruction of a 1D signal by mirror descent from CDP measurements.}
\label{reconstruction_CDP}
\end{figure}

\subsection{Recovery of the roughness of a 2D surface (light scattering)}
In this experiment, we simulated a rough surface as a $256 \times 256$ Gaussian random field. The goal to recover this surface profile from the magnitude of the measurements according to the CDP model with $P=100$ masks. The initial guess was drawn from the uniform distribution. The recovery results are displayed in Figure~\ref{reconstruction2D_CDP}. 

\begin{figure}
\centering
\subfloat[Original surface]{ 
	\centering
    \includegraphics[width=0.33\linewidth]{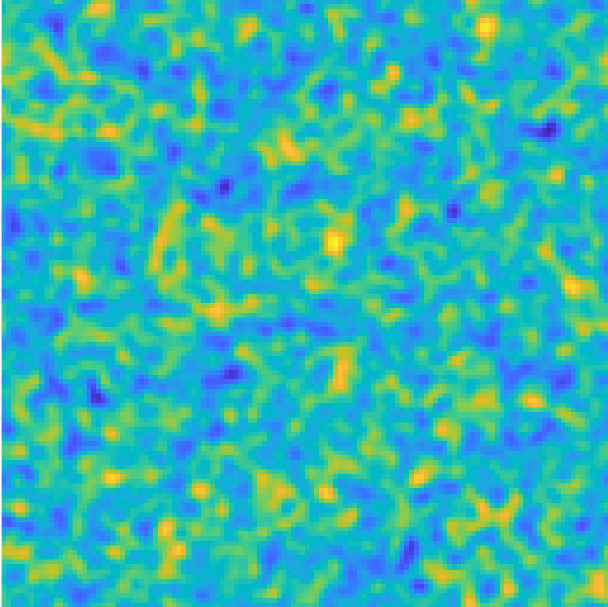}}
\subfloat[Recovered surface]{
    \centering
    \includegraphics[width=0.33\linewidth]{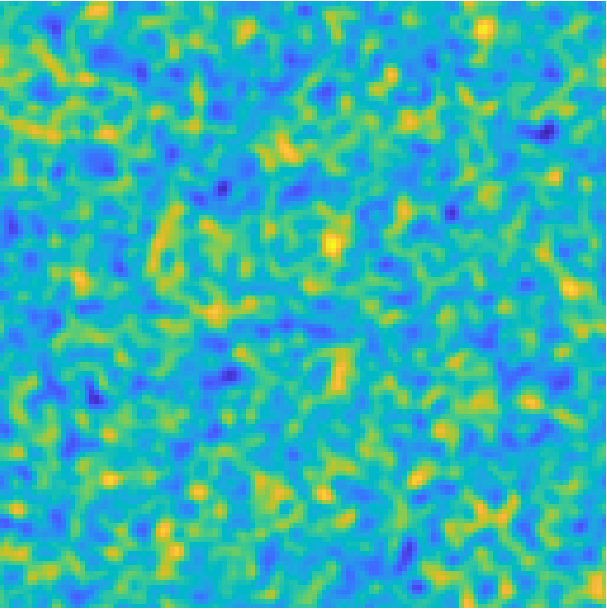}}
\subfloat[\tcb{Relative error}]{
    \centering
    \includegraphics[width=0.35\linewidth]{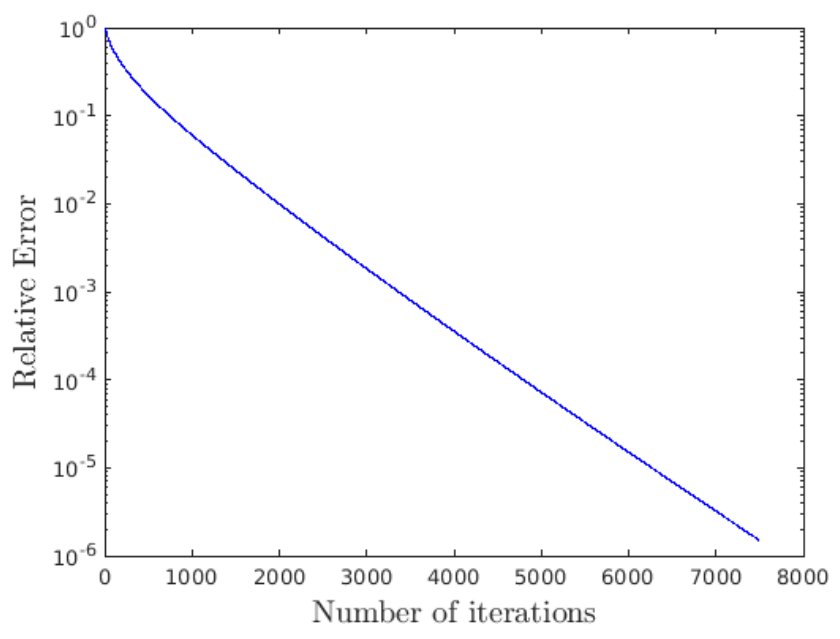}}
    \caption{Roughness surface profile reconstruction by solving the phase retrieval problem from the CDP measurement model using mirror descent with uniform random initialization.}
\label{reconstruction2D_CDP}
\end{figure}

\begin{figure}
\tcb{
\centering
	\subfloat[Gaussian measurements]{
    \centering
    \includegraphics[width=0.5\textwidth]{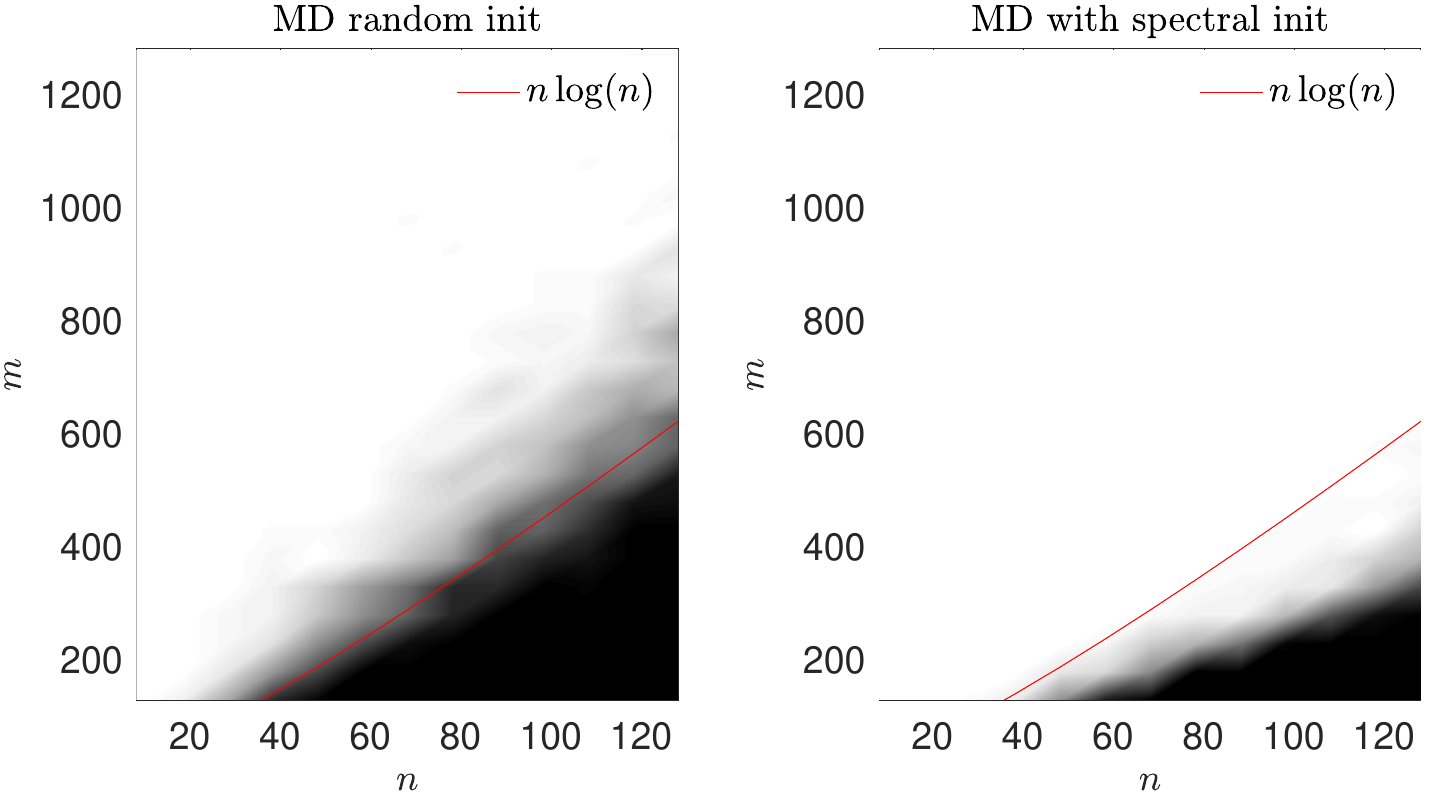}}
    \subfloat[CDP measurements]{
    \centering
    \includegraphics[width=0.5\textwidth]{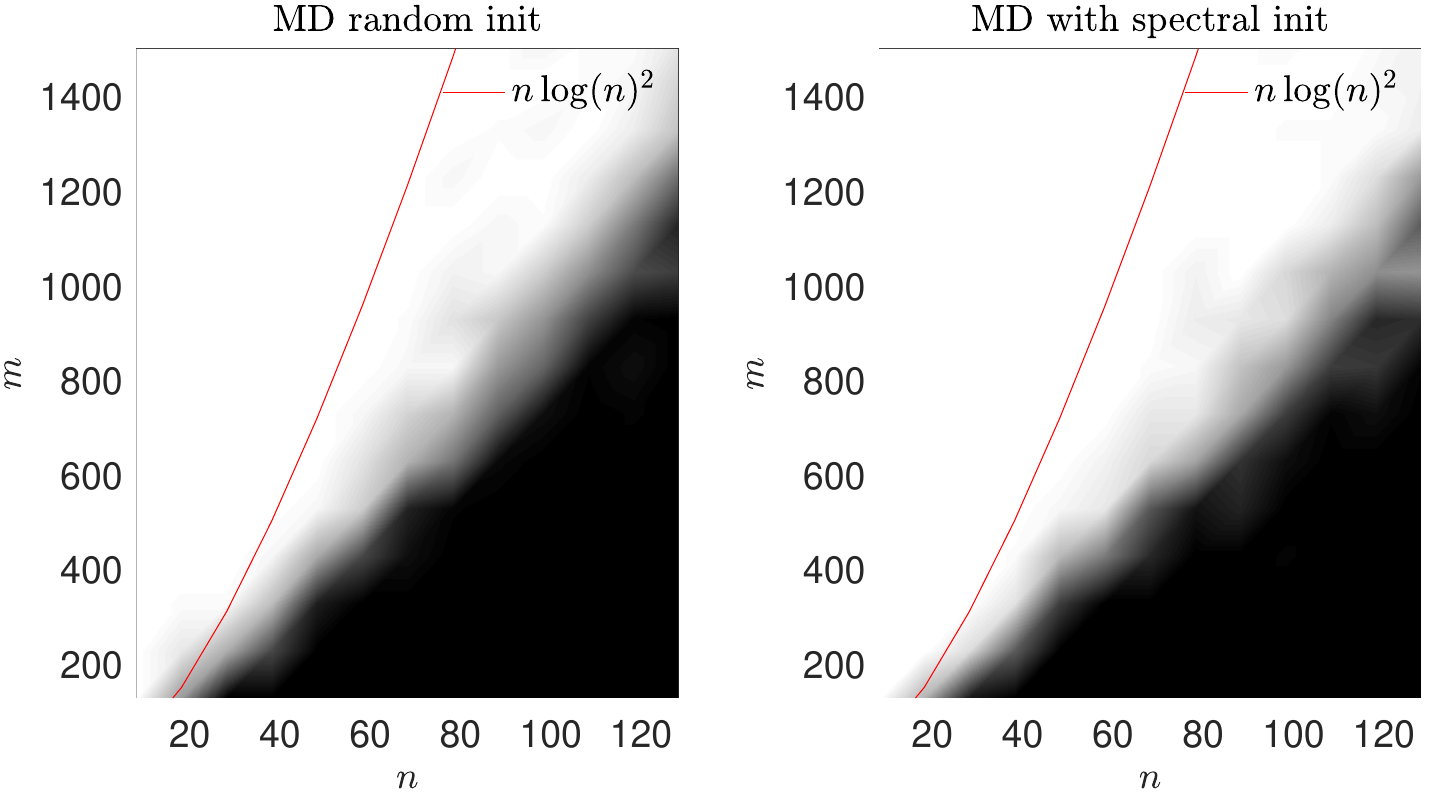}}}
    \caption{\tcb{Phase diagrams of mirror descent (MD) with spectral and uniform random initialization. (a) Gaussian measurements. (b) CDP measurements.}}
\label{Phase_transition_MD}
\end{figure}

\begin{figure}
\tcb{
\centering
	\subfloat[Gaussian measurements]{
    \centering
    \includegraphics[width=0.5\textwidth]{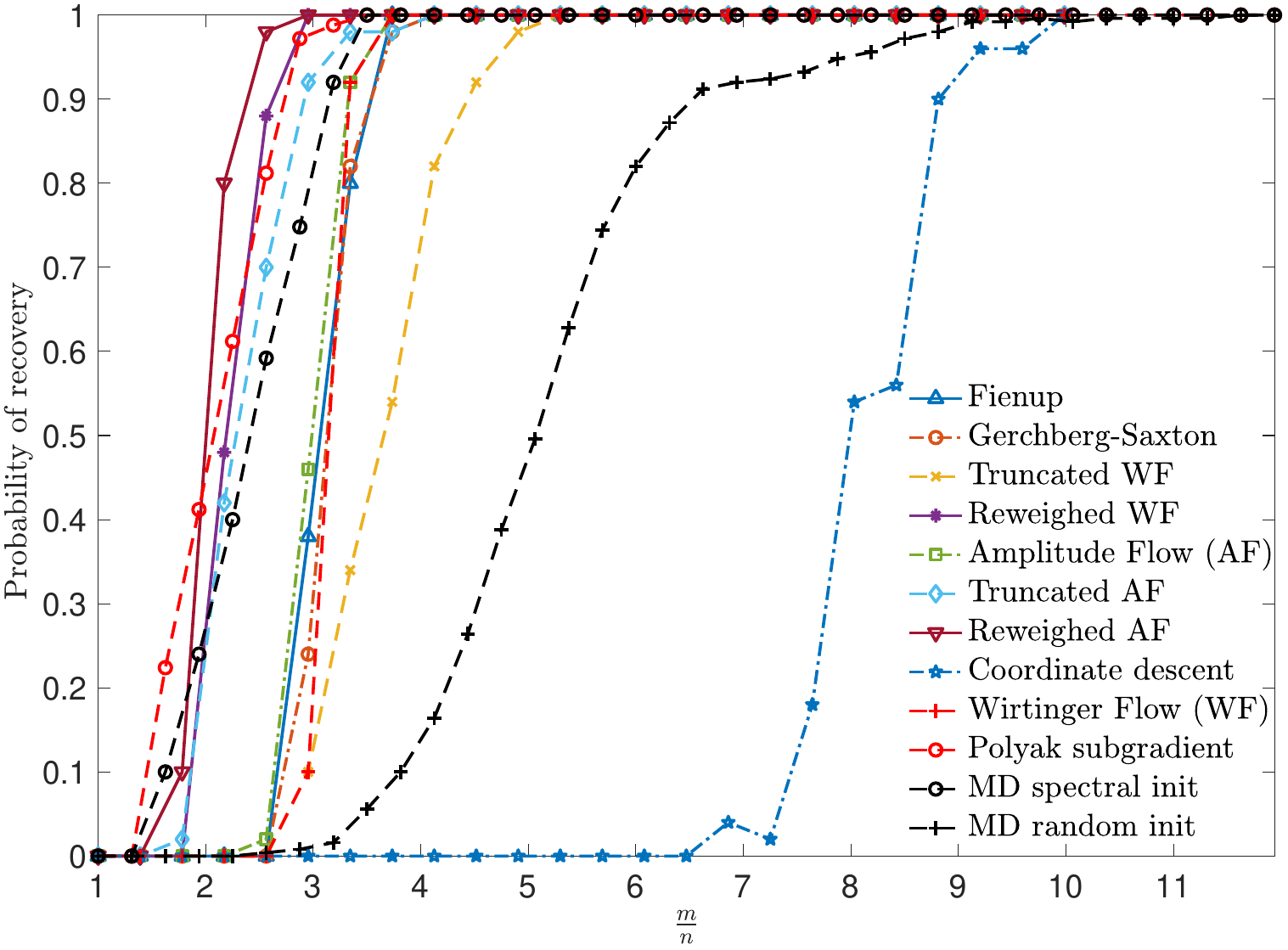}}
    \subfloat[CDP measurements]{
    \centering
    \includegraphics[width=0.5\textwidth]{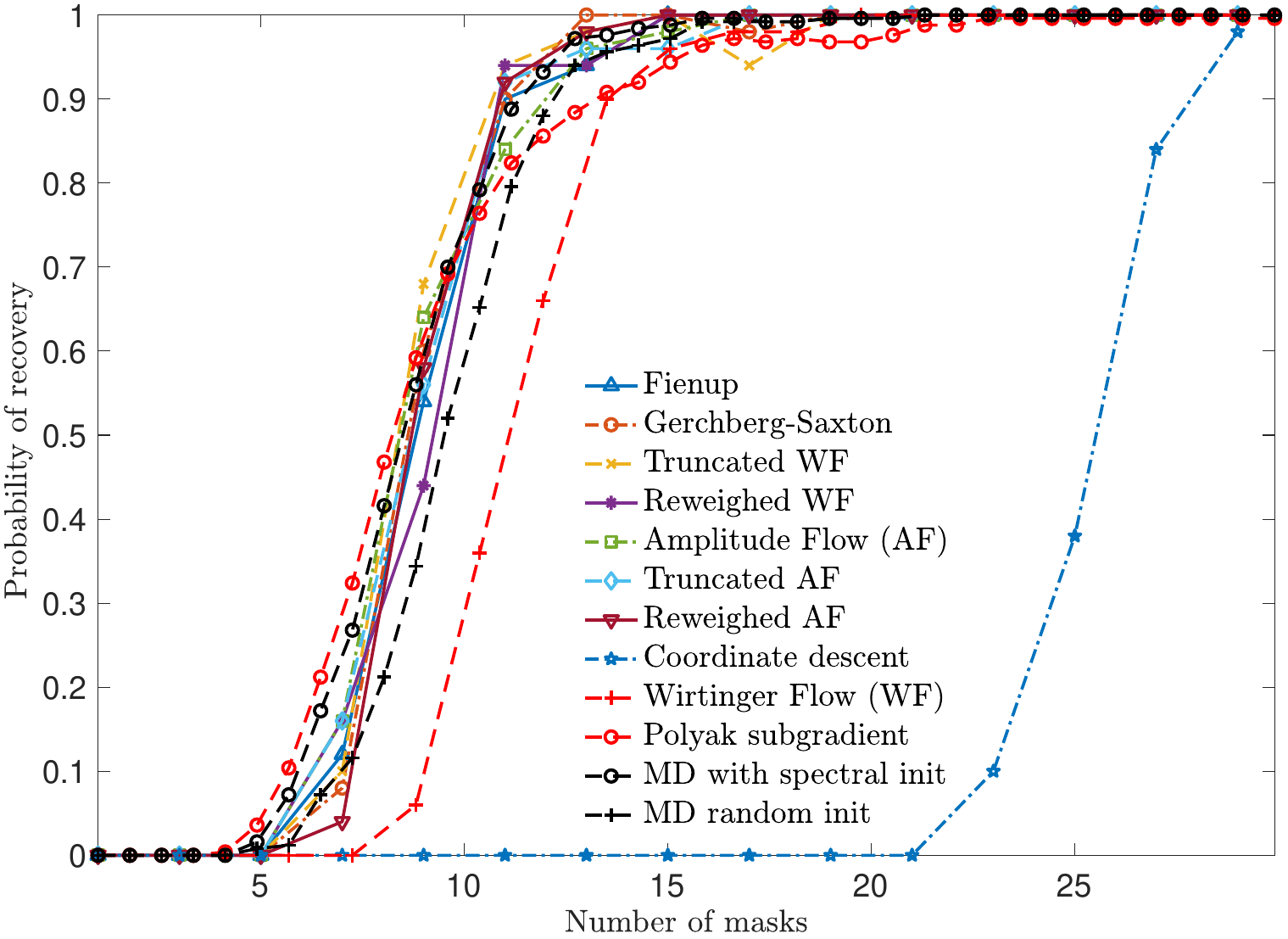}}}
\caption{\tcb{Comparison of mirror descent to other methods in the literature. Each plot shows the empirical probability of success based on 100 random trials for two different measurement models (Gaussian and CDP) and a varied number of measurements.}}
\label{Phase_transition_comparison}
\end{figure}

\subsection{Phase diagrams and comparison with other algorithms}
\tcb{
\paragraph{Phase diagrams}
We first report the results of an experiment designed to estimate the phase retrieval probability for mirror descent, as we vary $n$ and $m$. The results are depicted in Figure~\ref{Phase_transition_MD}. For each pair $(n,m)$, we generated $100$ random instances and solved them with mirror descent (denoted MD for short hereafter), both with spectral initialization and with random uniform initialization. Each diagram shows the empirical probability (among the $100$ random trials) that an algorithm successfully recovers the original vector up to a global sign change. We declared that a signal is recovered if the relative error \eqref{relative_error} is less than $10^{-5}$. The grayscale of each point in the diagrams reflects the empirical probability of success, from $0\%$ (black) to $100\%$ (white). The solid curve marks the prediction of the phase transition edge. One clearly sees a phase transition phenomenon which is in agreement with the predicted sample complexity bound shown as a solid line. For Gaussian measurements, MD with uniform random initialization has a transition to success occurring at a higher threshold compared to the version of MD with spectral initialization. This is in agreement with our theoretical findings. On the other hand, for CDP measurements, MD with uniform random initialization shows comparable performance to the version with spectral initialization especially as the oversampling (number of masks) increases, confirming numerically that spectral initialization does not seem to be mandatory for MD with CDP measurements.

\paragraph{Comparison with other algorithms}
We have also carried out a comprehensive comparative study of mirror descent (MD) to the methods included in the PhasePack library \cite{chandra2017phasepack}, which provides a common interface for testing phase retrieval methods on empirical datasets. We have used their implementations and included in the comparison MD and the Polyak subgradient method used in \cite{davis_nonsmooth_2020}. For fair comparison, and except MD with uniform initialization, we used spectral initialization for all algorithms. The results are displayed Figure~\ref{Phase_transition_comparison} where each plot shows the empirical probability of success of each algorithm based on 100 random trials for two different measurement models (Gaussian and CDP) and a varied number of measurements. We fixed $n=128$ in this experiment. References for all other algorithms as denoted in the legend in PhasePack can be found in \cite{chandra2017phasepack}.

For Gaussian measurements, MD with spectral initialization is in the group of best performing methods (Reweighted WF, Reweighted AF, Truncated AF, Polyak subgradient, MD) which exhibit comparable performance, though MD and Polyak subgradient are slightly better for low sampling rates (less than $2$), and Reweighted AF appears better for $m/n \in [2,3]$. This first group clearly outperforms the others especially when oversampling is less than $3$. This is followed by a second group (AF, Fineup, Gerchberg-Saxton and WF), then Truncated WF, MD with random initialization, and finally the Coordinate Descent method. As far CDP measurements are concerned, most algorithms perform similarly and MD with spectral initilization appears to be among the best ones. MD with uniform random initialization has a recovery performance rather close to those ones, and better than the Wirtinger flow even if the latter uses spectral initialization. 
}


\begin{appendices}\label{sec:appen}

\section{Proofs for the Deterministic Case}
{
Let us start this section by recalling our objective function \ie
\begin{equation}
\forall x\in\bbR^n,\quad f(x)=\qsom{\paren{|\adj{a_r}x|^2-y[r]}^2}=\qsom{\loss{|\adj{a_r}x|^2}{|\adj{a_r}\avx|^2}}, 
\end{equation}

The following expressions give the gradients and Hessians of $f$ and $\psi$ that will be used throughout. For all $\forall x\in\bbR^n$, we have

\begin{align}
\nabla f(x)&= \som{\paren{|\adj{a_r}x|^2-|\adj{a_r}\avx|^2}a_r\adj{a_r}x}, &\nabla^2f(x)&= \som{\paren{3|\adj{a_r}x|^2-|\adj{a_r}\avx|^2}a_r\adj{a_r}},\label{hessg}\\
\nabla\psi(x)&=\paren{\normm{x}^2+1}x, &\nabla^2\psi(x)&=\paren{\normm{x}^2+1}\Id+2x\transp{x}.\label{hessent}
\end{align}

Let start with the following useful lemma to compare the Bregman divergences of smooth functions.
\begin{lemma}\label{Bregcomp}
Let $g,\phi\in C^2(\bbR^n)$. If $\forall u\in\bbR^n$, $\nabla^2g(u) \lon \nabla^2\phi(u)$ for all $u$ in the segment $[x,z]$, then,
\begin{equation}
D_g(x,z)\leq D_{\phi}(x,z) .
 \end{equation}
\end{lemma}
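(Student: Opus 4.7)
The plan is to reduce the inequality $D_g(x,z)\leq D_\phi(x,z)$ to a pointwise comparison of the Hessians along the segment $[x,z]$, by expressing each Bregman divergence via an integral Taylor remainder. Since both $g$ and $\phi$ are $C^2(\bbR^n)$, the second-order Taylor expansion with integral remainder is available on the line segment joining $x$ and $z$.

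Concretely, the first step is to write
\begin{equation*}
D_g(x,z) = g(x)-g(z)-\langle \nabla g(z), x-z\rangle = \int_0^1 (1-t)\,\bigl\langle \nabla^2 g\bigl(z+t(x-z)\bigr)(x-z),\, x-z\bigr\rangle\, dt,
\end{equation*}
and the analogous identity for $D_\phi(x,z)$. This identity follows from one integration by parts applied to the standard one-dimensional Taylor remainder of $t\mapsto g(z+t(x-z))$ on $[0,1]$, and it is the natural bridge between the Hessian hypothesis and the divergence inequality.

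The second step is to use the assumption $\nabla^2 g(u)\preccurlyeq \nabla^2 \phi(u)$ for every $u\in[x,z]$. Applied to $u=z+t(x-z)$ and the direction $x-z$, this gives the pointwise quadratic form inequality
\begin{equation*}
\bigl\langle \nabla^2 g\bigl(z+t(x-z)\bigr)(x-z),\,x-z\bigr\rangle \;\leq\; \bigl\langle \nabla^2 \phi\bigl(z+t(x-z)\bigr)(x-z),\,x-z\bigr\rangle
\end{equation*}
for every $t\in[0,1]$. Finally, since the weight $(1-t)$ is nonnegative on $[0,1]$, integrating this inequality term by term and plugging into the two Taylor identities of the previous paragraph yields $D_g(x,z)\leq D_\phi(x,z)$, which is the claim.

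There is no real obstacle here: the argument is a one-shot Taylor integral combined with monotonicity of the integral. The only technical point to double-check is the validity of the integral Taylor remainder with the weight $(1-t)$, which requires $g,\phi\in C^2$ on an open set containing the segment $[x,z]$; this is granted by the hypothesis $g,\phi\in C^2(\bbR^n)$. Convexity of $g$ or $\phi$ is not needed, so one does not even require nonnegativity of either divergence.
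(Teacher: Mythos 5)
Your proof is correct and follows essentially the same route as the paper: both write each Bregman divergence as the second-order Taylor integral remainder $\int_0^1(1-t)\langle \nabla^2(\cdot)(z+t(x-z))(x-z),x-z\rangle\,dt$ and then integrate the pointwise quadratic-form inequality given by the Hessian ordering along the segment. No gaps.
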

\begin{proof}
The result comes from the Taylor-MacLaurin expansion. Indeed we have $\forall x,z\in \bbR^n$ 
\begin{align*}
D_g(x,z)
&=g(x)-g(z)-\pscal{\nabla g(z),x-z} \\
&=\int_0^1(1-\tau)\pscal{x-z,\nabla^2g(z+\tau(x-z))(x-z)}d\tau,
\end{align*}
and thus
\begin{multline*}
D_{\phi}(x,z)-D_{g}(x,z) = \\ \int_0^1(1-\tau)\pscal{x-z,\paren{\nabla^2\phi(z+\tau(x-z))-\nabla^2g(z+\tau(x-z))}(x-z)}d\tau .
\end{multline*}
The positive semidefiniteness assumption implies the claim.
\end{proof}
}

{
\subsection{Proof of Lemma~\ref{Tsmad}}\label{PrTsmad}
\begin{proof}
Our proof is different from that of \cite[Lemma~5.1]{bolte_first_2017} and gives a better estimate of $L$. Since $y$ has positive entries, we have for all $x,u \in \bbR^n$,
\begin{align*}
\pscal{u,\nabla^2f(x)u} 
&= \som{\paren{3|\adj{a_r}x|^2-y[r]}|\adj{a_r}u|^2}\\
&\leq \som{3|\adj{a_r}x|^2|\adj{a_r}u|^2} \\
&\leq \normm{x}^2\normm{u}^2\som{3\normm{a_r}^4} .
\end{align*}
On the other hand,
\begin{align*}
\pscal{u,\nabla^2\psi(x)u} 
&= \paren{\normm{x}^2+1}\normm{u}^2 + 2 |\pscal{x,u}|^2 \\
&\geq \normm{x}^2\normm{u}^2
\end{align*}
Thus for any $L \geq \som{3\normm{a_r}^4}$, we have for all $x \in \bbR^n$
\begin{equation}\label{hessmat}
\nabla^2f(x) \lon L\nabla^2\psi(x). 
\end{equation}
We conclude by invoking Lemma~\ref{Bregcomp} with $g=f$ and $\phi=L\psi$, and Proposition~\ref{pp:bregman}\ref{pp:bregman2}. 
\end{proof}

\tcb{
The following lemma states a key inequality that will be the starting point of our proof. It has appeared in different forms in the literature; see \cite[Lemma~4.1 and Remark~4.1]{bolte_first_2017} or \cite[Lemma~4.1]{Teboulle18}. We hereafter include a self-contained proof that accounts for backtracking.
\begin{lemma}\label{Alllinone}
Let $\seq{\xk}$ be a sequence  generated by Algorithm~\ref{alg:MDBT}. Then $\forall x \in \bbR^n$
\begin{align}\label{Allinone}
D_{\psi}\paren{x,\xkp} + \gamma_k\paren{f(\xkp)-f(x)}\leq D_{\psi}\paren{x,\xk}-\kappa D_{\psi}\paren{\xkp,\xk}-\gamma_k D_f\paren{x,\xk} .
\end{align}
\end{lemma}
}
%
%
\begin{proof}
From the update of $\xkp$, we have $\nabla\psi(\xk)-\nabla\psi(\xkp)=\gamma_k\nabla f(\xk)$, and multiplying both sides by  $\xkp-x$, we get 
\begin{align}\label{psdir}
\pscal{\nabla\psi(\xk)-\nabla\psi(\xkp),\xkp-x}=\gamma_k\pscal{\nabla f(\xk),\xkp-x}. 
\end{align}
Using the three-point identity \eqref{3poin}, we have
\begin{align}\label{MD3poin}
D_{\psi}\paren{x,\xk}-D_{\psi}\paren{x,\xkp}-D_{\psi}\paren{\xkp,\xk}=\gamma_k\pscal{\nabla f(\xk),\xkp-x}, 
\end{align}
By the backtracking test, we have that $f$ verifies the $L_k-$relative smoothness inequality \eqref{eq:smoothadaptable} \wrt $\psi$ at $(\xkp,\xk)$, with constant $L_k \leq L$, that is
\begin{align*}
f(\xkp)-f(\xk) 
&\leq \pscal{\nabla f(\xk),\xkp-\xk} + L_k D_{\psi}(\xkp,\xk) \\
& = \pscal{\nabla f(\xk),\xkp-x}+\pscal{\nabla f(\xk),x-\xk}+ L_k D_{\psi}(\xkp,\xk),\label{Tsmadreplace}\numberthis
\end{align*}
Plugging \eqref{MD3poin} into \eqref{Tsmadreplace}, we arrive at
\begin{align*}
&\gamma_k\paren{f(\xkp)-f(\xk)} \\
&\leq D_{\psi}\paren{x,\xk}-D_{\psi}\paren{x,\xkp}-D_{\psi}\paren{\xkp,\xk}+\gamma_k\pscal{\nabla f(\xk),x-\xk}+ \gamma_k L_k D_{\psi}(\xkp,\xk)\\
&\leq D_{\psi}\paren{x,\xk}-D_{\psi}\paren{x,\xkp}-\paren{1-\gamma_k L_k}D_{\psi}\paren{\xkp,\xk}+\gamma_k\pscal{\nabla f(\xk),x-\xk} \\
&\leq D_{\psi}\paren{x,\xk}-D_{\psi}\paren{x,\xkp}-\kappa D_{\psi}\paren{\xkp,\xk}+\gamma_k\pscal{\nabla f(\xk),x-\xk}.
\end{align*}
Therefore
\begin{align*}
\gamma_k\paren{f(\xkp)-f(x)} 
&\leq D_{\psi}\paren{x,\xk}-D_{\psi}\paren{x,\xkp}-\kappa D_{\psi}\paren{\xkp,\xk}\\
&\quad +\gamma_k\paren{f(\xk)-f(x)+\pscal{\nabla f(\xk),x-\xk}}\\ 
&= D_{\psi}\paren{x,\xk}-D_{\psi}\paren{x,\xkp}-\kappa D_{\psi}\paren{\xkp,\xk}-\gamma_k D_f(x,\xk) .
\end{align*}
\end{proof}
}

{
\subsection{Proof of Theorem~\ref{Maintheo}}\label{PrMaintheo}
\begin{proof} $~$
\paragraph*{\ref{point-i}-\ref{point-ii}} The objective function $f$ in \eqref{formulepro} is a real polynomial, hence obviously semi-algebraic. It then follows that $f$ satisfies the Kurdyka-\L{}ojasiewicz (KL) property \cite{loj1,loj2}. Combining this with Lemma~\ref{Alllinone}, which ensures that the sequence $(\xk)_{k \in \bbN}$ is a gradient-like descent sequence, and $1$-strong convexity of the entropy $\psi$, the proof of \ref{point-i}-\ref{point-ii} are similar to those of \cite[Proposition~4.1,Theorem~4.1]{bolte_first_2017} with slight modifications to handle backtracking. 

\paragraph*{\ref{point-iii}-\ref{point-iii-a}} The proof of this claim follows the same steps as the proof of \cite[Theorem~2.12]{attouch_convergence_2013} using again that $f$ is a continuous function which satisfies the KL property, that $\min f = 0$ and that $(\xk)_{k \in \bbN}$ is a gradient-like descent sequence thanks to Lemma~\ref{Alllinone}.

\paragraph*{\ref{point-iii}-\ref{point-iii-b}}
We verify by induction that $\xk \in B(\xsol,\rho), \forall k \in \bbN$. Observe first that $\xo \in B(\xsol,r) \subset B(\xsol,\rho)$ since $r \leq \frac{\rho}{\max\pa{\sqrt{\Theta(\rho)},1}}\leq\rho$. Suppose now that for $k \geq 0$, $x_i \in B(\xsol,\rho)$ for all $i \leq k$. From Lemma~\ref{Alllinone} applied at $x=\xsol$, and the optimality of $\xsol$, we have
\begin{align}
D_{\psi}\paren{\xsol,\xkp}&\leq D_{\psi}\paren{\xsol,\xkp}-\paren{1-\gamma_{k} L} D_{\psi}\paren{\xkp,\xk}-\gamma_{k} D_f(\xsol,\xk) \nonumber\\
&\leq D_{\psi}\paren{\xsol,\xk}-\gamma_{k} D_f(\xsol,\xk) \nonumber\\
&\leq \paren{1-\gamma_{k}\sigma}D_{\psi}\paren{\xsol,\xk} \label{eq:Dpsiloclinear}\\
&\leq \prod_{i=0}^{k}(1-\gamma_i\sigma) D_\psi(\xsol,\xo)
\leq D_{\psi}\paren{\xsol,\xo} , \nonumber
\end{align}
where we used the positivity of $D_{\psi}$ and the \tcb{relative strong convexity on $B(\xsol,\rho)$}. Now invoking Proposition~\ref{pp:bregman}\ref{pp:bregman4}, we have 
\begin{align*}
\normm{\xkp-\xsol}^2\leq 2 D_{\psi}\paren{\xsol,\xk}
&\leq 2\prod_{i=0}^{k}(1-\gamma_i\sigma) D_\psi(\xsol,\xo) \\
&\leq \Theta(\rho) \normm{\xo-\xsol}^2
\leq \frac{\Theta(\rho)}{\max\pa{\Theta(\rho),1}} \rho^2 \leq \rho^2 ,
\end{align*}
which entails that $x_i \in B(\xsol,\rho)$ for all $i \leq k+1$ as desired. \\
\tcb{
To show \eqref{loclinear}, we use again Lemma~\ref{Alllinone}, relative strong convexity on $B(\xsol,\rho)$, and \eqref{eq:Dpsiloclinear} to get 
\begin{align}\label{eq:Dpsiloclinearxkp}
D_{\psi}\paren{\xsol,\xkp} + \gamma_k\sigma D_{\psi}\paren{\xkp,\xsol}
\leq D_{\psi}\paren{\xsol,\xkp} + \gamma_k\paren{f(\xkp)-\fsol} 
\leq \paren{1-\gamma_{k}\sigma}D_{\psi}\paren{\xsol,\xk} .
\end{align}
Now Proposition~\ref{pp:bregman}\ref{pp:bregman4} and $1$-strong convexity of $\psi$ tell us that 
\begin{equation}\label{eq:sympsi}
D_{\psi}\paren{\xsol,\xkp} \leq \Theta(\rho)D_{\psi}\paren{\xkp,\xsol} .
\end{equation}
Combining \eqref{eq:Dpsiloclinearxkp}, \eqref{eq:sympsi}, $1$-strong convexity of $\psi$ and that $2D_{\psi}\paren{\xsol,\xo} \leq \rho^2$, we get the claim.
}
\paragraph*{\ref{point-iv}} We need the following lemma which is an extension of \cite[Proposition~10]{Lee19} to the more general \tcb{$L-$smooth} case. 
\begin{lemma}\label{lem:strictsadeig}
 Let $F$ be defined as in \eqref{MDalgo} then,
 \begin{enumerate}[label=(\alph*)]
  \setlength{\itemindent}{0.35cm}
  \item $\forall x\in\bbR^n, \det{\deriv F(x)}\neq0,$
  \item 
  $
   \strisad(f) \subset U_F \eqdef \enscond{x\in\bbR^n}{F(x)=x, \max_{i}\left|\lambda_i(\deriv F(x))\right|>1} .
  $
 \end{enumerate}
\end{lemma}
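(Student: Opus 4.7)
The plan is to obtain an explicit expression for $\deriv F(x)$, then verify claims (a) and (b) by using relative smoothness for the invertibility statement and Sylvester's law of inertia for the eigenvalue statement.

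Since $\psi$ is Legendre with $\nabla^2\psi(x) \succeq I$ on $\bbR^n$ (see \eqref{hessent}), the inverse function theorem gives $\nabla^2\psi^*(y) = [\nabla^2\psi(\nabla\psi^*(y))]^{-1}$ for every $y$. Writing $F(x)=\nabla\psi^{*}(\nabla\psi(x)-\gamma\nabla f(x))$ and applying the chain rule, I would first establish
\[
\deriv F(x) \;=\; [\nabla^2\psi(F(x))]^{-1}\bigl[\nabla^2\psi(x)-\gamma\nabla^2 f(x)\bigr].
\]
This single identity is the workhorse for both claims.

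For claim (a), the left factor is clearly invertible since $\nabla^2\psi(F(x))\succ 0$. For the right factor, I would use Lemma~\ref{Tsmad}, which gives $\nabla^2 f(x)\preceq L\nabla^2\psi(x)$. Multiplying this Loewner inequality by the positive scalar $\gamma=(1-\kappa)/L$ and rearranging,
\[
\nabla^2\psi(x)-\gamma\nabla^2 f(x) \;\succeq\; (1-\gamma L)\nabla^2\psi(x) \;=\; \kappa\nabla^2\psi(x)\;\succ\;0,
\]
so the second factor is positive definite, hence invertible, and $\det(\deriv F(x))\neq 0$.

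For claim (b), first note that any $\xpa\in\strisad(f)\subset\crit(f)$ satisfies $\nabla f(\xpa)=0$, so $F(\xpa)=\nabla\psi^{*}(\nabla\psi(\xpa))=\xpa$, meaning $\xpa$ is a fixed point. Substituting into the formula for $\deriv F$ yields
\[
\deriv F(\xpa) \;=\; I \;-\; \gamma\,[\nabla^2\psi(\xpa)]^{-1}\nabla^2 f(\xpa).
\]
Let $M=[\nabla^2\psi(\xpa)]^{-1}\nabla^2 f(\xpa)$. Although $M$ is not symmetric, it is similar (through conjugation by $[\nabla^2\psi(\xpa)]^{1/2}$) to the symmetric matrix $\widetilde M=[\nabla^2\psi(\xpa)]^{-1/2}\nabla^2 f(\xpa)[\nabla^2\psi(\xpa)]^{-1/2}$, so all eigenvalues of $M$ are real. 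Moreover, $\widetilde M$ is congruent to $\nabla^2 f(\xpa)$ with congruence transform $[\nabla^2\psi(\xpa)]^{-1/2}$; by Sylvester's law of inertia, $\widetilde M$ and $\nabla^2 f(\xpa)$ have the same number of negative eigenvalues. Since $\xpa$ is a strict saddle, $\lambda_{\min}(\nabla^2 f(\xpa))<0$, so $M$ admits a real eigenvalue $\lambda<0$, and $\deriv F(\xpa)$ has the corresponding eigenvalue $1-\gamma\lambda>1$. Hence $\max_i|\lambda_i(\deriv F(\xpa))|>1$, proving the inclusion.

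The main subtlety lies in part (b): $\deriv F(\xpa)$ is not a Euclidean-gradient-descent operator, so its non-symmetry could, a priori, produce complex eigenvalues and make the condition $\max_i|\lambda_i|>1$ delicate to enforce. The similarity-to-symmetric argument combined with Sylvester's law of inertia removes this obstacle cleanly. In part (a), the critical point is to make explicit the one-sided Loewner bound from relative smoothness and to observe that the backtracking rule $\gamma L=1-\kappa<1$ provides exactly the positive-definiteness margin needed.
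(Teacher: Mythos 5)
Your proof is correct and follows essentially the same route as the paper: the chain-rule identity $\deriv F(x)=[\nabla^2\psi(F(x))]^{-1}[\nabla^2\psi(x)-\gamma\nabla^2 f(x)]$, the Loewner bound $\nabla^2\psi(x)-\gamma\nabla^2 f(x)\succeq\kappa\nabla^2\psi(x)\succ 0$ from relative smoothness for (a), and the conjugation of $I-\gamma[\nabla^2\psi(\xpa)]^{-1}\nabla^2 f(\xpa)$ by $[\nabla^2\psi(\xpa)]^{1/2}$ for (b). The only cosmetic difference is that you invoke Sylvester's law of inertia to transfer the negative eigenvalue of $\nabla^2 f(\xpa)$ to the congruent matrix, whereas the paper evaluates the quadratic form at $v'=[\nabla^2\psi(\xpa)]^{1/2}v$ and applies Courant--Fischer; both yield the same conclusion.
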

\begin{proof}[Proof of Lemma~\ref{lem:strictsadeig}] 
Recall that $F(x)=(\nabla\psi)^{-1}\paren{\nabla\psi(x)-\gamma\nabla f(x)}$.
Denote $G(x) \eqdef \nabla\psi(x)-\gamma\nabla f(x)$ so that $F(x)=(\nabla\psi)^{-1} \circ G(x)$.
\begin{enumerate}[label=(\alph*)]
  \setlength{\itemindent}{0.3cm}
\item Since $\psi$ is $C^2$ function, and thus $\nabla\psi$ is $C^1$, and as $\psi$ is strongly convex, the inverse function theorem ensures that $(\nabla\psi)^{-1}$ is a local diffeomorphism \footnote{Recall that we have already argued that $\psi$ is a Legendre function and thus $\nabla \psi$ is a bijection from $\bbR^n$ to $\bbR^n$ with inverse $(\nabla\psi)^{-1}=\nabla\psi^*$; see \cite[Theorem~26.5]{rockafellar_convex_1970}}. Therefore to have $\det{\deriv F(x)}\neq0$, it suffices to show that $G$ is a local diffeomorphism \ie $\forall x\in\bbR^n,\deriv {G(x)}$ is an invertible linear transformation. We have $\deriv{G(x)}= \nabla^2\psi(x)-\gamma\nabla^2f(x)$, and 
the $L-$relative smoothness property of $f$ \wrt $\psi$ (see \eqref{hessmat} in the proof of Lemma~\ref{Tsmad}) implies that 
\begin{align*}
\deriv{G(x)} = \nabla^2\psi(x)-\gamma\nabla^2f(x) \slon  (1-\gamma L) \nabla^2\psi(x) = \kappa \nabla^2\psi(x) \slon \kappa \Id \succ 0 .
\end{align*}
where we used $1$-strong convexity of $\psi$ and that $\gamma L = 1-\kappa \in ]0,1[$.

\item For $\xpa\in\strisad(f)$, we have $F(\xpa)=\xpa$ since $\strisad(f)\subset\crit(f)$. It remains to show that  $\det{\deriv F(\xpa)}$ has an eigenvalue of magnitude greater than one.
We have, 
\begin{align*}
\deriv F(\xpa)\stackrel{\mathrm{(Chain\text{ }rule)}}=&\nabla^2\psi^{-1}(G(\xpa))\deriv{G(\xpa)},\\
 =&\nabla^2\psi^{-1}(\xpa)\paren{\nabla^2\psi(\xpa)-\gamma\nabla^2f(\xpa)},\\
 =&\Id-\gamma\nabla^2\psi(\xpa)^{-1}\nabla^2f(\xpa).
\end{align*}
Denote for short $H_{\psi}=\nabla^2\psi(\xpa)$. We then have
\begin{align*}
H_{\psi}^{1/2}\deriv F(\xpa)H_{\psi}^{-1/2}= \Id-\gamma H_{\psi}^{-1/2}\nabla^2f(\xpa)H_{\psi}^{-1/2}.
\end{align*}
$H_{\psi}^{1/2}\deriv F(\xpa)H_{\psi}^{-1/2}$ is symmetric. Let $v'=H_{\psi}^{1/2}v$ with $v$ a unit-norm eigenvector associated to a strictly negative eigenvalue of $\nabla^2f(\xpa)$. By the Courant-Fisher min-max theorem, we have
\begin{align*}
\lambda_{\min}(H_{\psi}^{-1/2}\nabla^2f(\xpa)H_{\psi}^{-1/2}) 
&\leq \pscal{v',H_{\psi}^{-1/2}\nabla^2f(\xpa)H_{\psi}^{-1/2}v'} \\
&= \pscal{v,\nabla^2f(\xpa)v} < 0 .
\end{align*}
In turn, $1-\gamma\lambda_{\min}(H_{\psi}^{-1/2}\nabla^2f(\xpa)H_{\psi}^{-1/2}) > 1$ is an eigenvalue of $H_{\psi}^{1/2}\deriv F(\xpa)H_{\psi}^{-1/2}$. Since, 
$H_{\psi}^{1/2}\deriv F(\xpa)H_{\psi}^{-1/2}$ is similar to $\deriv F(\xpa)$, we conclude.
 \end{enumerate}
\end{proof}
To show \ref{point-iv}, we combine claim \ref{point-ii}, Lemma~\ref{lem:strictsadeig} and  the centre stable manifold theorem (see \cite[Corollary~1]{Lee19}) which allows to conclude that $\enscond{\xo\in\bbR^n}{\lim\limits_{k\rightarrow\infty} F^k(\xo)\in\strisad(f)}$ has measure zero. 
\end{proof}
}

{
\section{Proofs for Random Measurements}
\subsection{Gaussian measurements}
In this section, we assume that the sensing vectors $(a_r)_{r\in \tcb{\bbrac{m}}}$ follow the \iid standard Gaussian model.
\subsubsection{Expectation and deviation of the Hessian}
The next lemma gives the expression of the expectation of $\nabla^2 f(x)$. 
\begin{lemma}\label{lem:expe_lem_G}\textbf{(Expectation of the Hessian)}
Under the Gaussian model, we have
\begin{align}\label{eq:expe_lem_G}
\esp{\nabla^2 f(x)}=3\para{2x\transp{x}+ \normm{x}^2\Id}-2\avx\transp{\avx}-\normm{\avx}^2\Id. 
\end{align}
\end{lemma}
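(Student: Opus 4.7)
\medskip

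\noindent\textbf{Plan of proof.} Starting from the closed-form expression of the Hessian in \eqref{hessg},
\[
\nabla^2 f(x) = \frac{1}{m}\sum_{r=1}^{m}\Bigl(3|\transp{a_r}x|^2 - |\transp{a_r}\avx|^2\Bigr)\,a_r\transp{a_r},
\]
I would first use linearity of expectation together with the fact that the $a_r$'s are i.i.d. to reduce the computation of $\esp{\nabla^2 f(x)}$ to the evaluation, for an arbitrary fixed vector $u\in\bbR^n$, of the matrix-valued fourth moment
\[
M(u) \eqdef \esp{(\transp{a}u)^2\, a\transp{a}}, \qquad a\sim\calN(0,\Id_n).
\]
Indeed, by the i.i.d.\ assumption each term in the sum has the same expectation, so after pulling out the factors $3$ and $-1$ and substituting $u=x$ and $u=\avx$ respectively, the statement reduces to showing that
\[
M(u) = 2\,u\transp{u} + \normm{u}^2\,\Id.
\]

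\medskip

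\noindent The key step, and the only non-trivial one, is establishing this identity for $M(u)$. I would do this entrywise using Isserlis'/Wick's formula for centred Gaussian moments: for $a\sim\calN(0,\Id_n)$ and indices $i,j,k,\ell\in[n]$,
\[
\esp{a_i a_j a_k a_\ell} = \delta_{ij}\delta_{k\ell} + \delta_{ik}\delta_{j\ell} + \delta_{i\ell}\delta_{jk}.
\]
Expanding $(\transp{a}u)^2 = \sum_{k,\ell} u_k u_\ell\, a_k a_\ell$, the $(i,j)$-entry of $M(u)$ becomes
\[
M(u)_{ij} = \sum_{k,\ell} u_k u_\ell\,\esp{a_k a_\ell a_i a_j}
        = \normm{u}^2\delta_{ij} + u_i u_j + u_j u_i,
\]
after collecting the three Wick contractions. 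This yields $M(u) = \normm{u}^2\Id + 2 u\transp{u}$ as claimed.

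\medskip

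\noindent Substituting back, we obtain
\[
\esp{\nabla^2 f(x)} = 3\,M(x) - M(\avx) = 3\bigl(2x\transp{x} + \normm{x}^2\Id\bigr) - \bigl(2\avx\transp{\avx} + \normm{\avx}^2\Id\bigr),
\]
which is exactly \eqref{eq:expe_lem_G}. The only potentially delicate point is the Gaussian fourth-moment computation, but Isserlis' theorem makes it routine; alternatively one could obtain the same identity via two applications of Stein's lemma (Gaussian integration by parts) applied successively to the two factors $a_i$ and $a_j$ inside the expectation. No additional structural assumptions on $x$ or $\avx$ are needed, so the formula holds for every $x\in\bbR^n$.
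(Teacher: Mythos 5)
Your proposal is correct and follows essentially the same route as the paper: reduce, by linearity and the i.i.d. assumption, to the single fourth-moment identity $\esp{(\transp{a}u)^2 a\transp{a}} = 2u\transp{u}+\normm{u}^2\Id$, applied at $u=x$ and $u=\avx$. The paper simply asserts this identity as a "standard moment calculation," whereas you supply the Wick/Isserlis computation explicitly, which is a valid (and more complete) justification of the same step.
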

\begin{proof} 
In view of \eqref{hessg}, it is sufficient to compute
\begin{align*}
\esp{\som{|\transp{a_r}x|^2}a_r\transp{a_r}} .
\end{align*}
Computing this expectation is standard using independence and a simple moment calculation, which gives 
\begin{align}
\esp{\som{|\transp{a_r}x|^2}a_r\transp{a_r}}=2x\transp{x}+\normm{x}^2\Id.
\end{align}
\end{proof}

We now turn our attention to the concentration of the Hessian of $f$ around its mean. We start with following key lemma.
\begin{lemma}\label{lem:conhess_G_int}
Fix $\vrho\in]0,1[$. If the number of samples obeys $m \geq C(\vrho)n\log n$, for some sufficiently large $C(\vrho) > 0$, then 
\begin{align*}
\normm{\som{|\transp{a_r}x|^2a_r\transp{a_r}}-\paren{2x\transp{x}+\normm{x}^2\Id}}\leq \frac{\vrho}{3} \normm{x}^2.
\end{align*}
holds simultaneously for all $x\in\bbR^n$ with a probability at least $1-5e^{-\zeta n}-\frac{4}{n^2}$,  where $\zeta$ is a fixed numerical constant. 
\end{lemma}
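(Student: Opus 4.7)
}

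The plan is to exploit the $2$-homogeneity of the map $x \mapsto M(x) \eqdef \frac{1}{m}\sum_{r=1}^m |\transp{a_r}x|^2 a_r\transp{a_r} - (2x\transp{x}+\normm{x}^2\Id)$ in order to reduce the problem to a uniform deviation on the unit sphere. Indeed $M(x) = \normm{x}^2 M(x/\normm{x})$ for $x\ne 0$, so it suffices to show that with the claimed probability,
\[
\sup_{u\in\mathbb{S}^{n-1}} \normm{M(u)} \leq \vrho/3.
\]

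First, I would establish the bound pointwise for a fixed $u\in\mathbb{S}^{n-1}$. The obstacle is that each summand $|\transp{a_r}u|^2 a_r\transp{a_r}$ is a product of sub-Gaussian entries, hence sub-exponential with heavy tails, and a naive application of matrix Bernstein yields dimension-dependent factors that are too loose. The standard remedy is a truncation argument: introduce a threshold $T \asymp \sqrt{\log n}$ and split
\[
|\transp{a_r}u|^2 a_r\transp{a_r} = X_r^{(1)} + X_r^{(2)}, \qquad X_r^{(1)} = |\transp{a_r}u|^2 a_r\transp{a_r} \mathbf{1}_{\{|\transp{a_r}u|\leq T, \normm{a_r}\leq \beta\sqrt{n}\}}.
\]
The truncated part $X_r^{(1)}$ has bounded spectral norm (of order $T^2 n$) and controlled matrix variance, so matrix Bernstein produces a deviation bound of order $\sqrt{n\log n/m} + n T^2 \log n / m$. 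The tail part $X_r^{(2)}$ is handled using Gaussian tail bounds on $|\transp{a_r}u|$ and $\normm{a_r}$ together with a simple union bound, and its contribution is shown to be smaller than $\vrho/6$ with high probability once $m \geq C(\vrho)n\log n$. The bias introduced by truncation is controlled by Gaussian moment tail integrals. Altogether this yields $\normm{M(u)} \leq \vrho/6$ at a fixed $u$ with probability at least $1-e^{-c n}-n^{-c'}$ for constants depending on $\vrho$.

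Second, I would extend this pointwise bound to a uniform one over $\mathbb{S}^{n-1}$ via a standard net-plus-continuity argument. Fix an $\eps$-net $\mathcal{N}_\eps$ of $\mathbb{S}^{n-1}$ of cardinality at most $(3/\eps)^n$, with $\eps$ a small enough absolute constant. A union bound over $\mathcal{N}_\eps$ of the pointwise estimate gives $\sup_{u\in\mathcal{N}_\eps}\normm{M(u)} \leq \vrho/6$ with probability at least $1 - e^{n \log(3/\eps)}(e^{-c n}+n^{-c'})$, which is of the requested form $1-5e^{-\zeta n}-4/n^2$ once $C(\vrho)$ is chosen large enough. To pass from the net to all of $\mathbb{S}^{n-1}$, I would use the polarisation-type identity
\[
M(u)-M(v) = \frac{1}{m}\sum_{r=1}^m \para{|\transp{a_r}u|^2-|\transp{a_r}v|^2} a_r\transp{a_r} - \bigpara{2(u\transp{u}-v\transp{v})+(\normm{u}^2-\normm{v}^2)\Id},
\]
so that on the high-probability event $\calE_{inj}$ (see \eqref{eq:injectivity_G}), together with an upper bound $\frac{1}{m}\normm{Ax}^2 \leq (1+\vrho)\normm{x}^2$ (which holds under the same sample complexity by \cite{vershynin_introduction_2012}-type bounds), the map $u\mapsto M(u)$ is Lipschitz on $\mathbb{S}^{n-1}$ with a dimension-free constant. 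Choosing $\eps$ a small numerical fraction of $\vrho$ then yields $\sup_{u\in\mathbb{S}^{n-1}}\normm{M(u)} \leq \vrho/3$.

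The main obstacle is the first step: obtaining a pointwise matrix concentration estimate with the correct sample-complexity scaling $n \log n$ despite the fourth-moment (heavy-tailed) nature of the summands. The truncation threshold $T \asymp \sqrt{\log n}$ is exactly what produces the $\log n$ factor in the sample-complexity bound; tightening the truncation level or replacing matrix Bernstein by a direct Hanson--Wright / decoupling argument would only affect constants. Once the pointwise bound is in place, the net/Lipschitz argument is routine.
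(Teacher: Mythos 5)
Your overall architecture --- homogeneity reduction to the sphere, a pointwise matrix concentration bound, then an $\eps$-net --- differs from the paper's, and unfortunately the combination you propose cannot reach the claimed sample complexity $m \gtrsim n\log n$. The obstruction is quantitative: for a fixed $u \in \mathbb{S}^{n-1}$, the matrix variance of $\sum_{r} |\transp{a_r}u|^2 a_r\transp{a_r}$ is of order $mn$ (e.g.\ $\esp{|\transp{a}u|^4\normm{a}^2 a\transp{a}} \approx 3n\,\Id$ up to low-rank terms), and truncation does not reduce it. Hence any Bernstein-type spectral-norm bound at deviation level $\vrho$ has failure probability at best of order $\exp(-c\vrho^2 m/n)$. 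To survive a union bound over a net of cardinality $5^n$ (or $(3/\eps)^n$) you would need $\vrho^2 m/n \gtrsim n$, i.e.\ $m \gtrsim n^2/\vrho^2$ --- two orders of magnitude worse than $n\log n$. This is precisely why the paper (following Cand\`es et al.) never concentrates the full spectral norm at a fixed point: it reduces to the scalar quadratic form $\transp{v}M(x)v$, splits $a_r$ into its component along $x$ (reduced to $e_1$ by isotropy) and the orthogonal part, controls the empirical moments of $a_r[1]$ \emph{once} by Chebyshev with polynomial failure probability, and then applies conditional scalar Hoeffding/Bernstein inequalities whose variance proxies are $O(1/m)$ with \emph{no} dimension factor. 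The resulting per-net-point failure probability $e^{-c\vrho^2 m}$ (resp.\ $e^{-c\vrho m/\log m}$) beats the net cardinality as soon as $m \gtrsim n$ (resp.\ $m\gtrsim n\log n$), which is where the $\log n$ in the sample complexity actually comes from.

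Two further points in your write-up would also need repair. First, the union bound $|\mathcal{N}_\eps|\cdot(e^{-cn}+n^{-c'})$ is vacuous, since $e^{n\log(3/\eps)}n^{-c'}\to\infty$; the polynomial-probability events (moment bounds, $\max_r\normm{a_r}$) must be global events pulled outside the union bound, as the paper does. Second, the claim that $u\mapsto M(u)$ is Lipschitz on $\mathbb{S}^{n-1}$ with a dimension-free constant on the event $\calE_{inj}$ is incorrect: $\normm{M(u)-M(v)}$ involves $\frac{1}{m}\normm{\sum_r (\transp{a_r}(u-v))(\transp{a_r}(u+v))\,a_r\transp{a_r}}$, whose worst case (take $u-v$ aligned with some $a_r$) scales like $\max_r\normm{a_r}^2\,\normm{u-v} \asymp n\normm{u-v}$; this would force $\eps \lesssim \vrho/n$ and a net of size $e^{cn\log n}$, aggravating the union-bound problem. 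The standard substitute is the self-bounding net lemma for quadratic forms (as in the paper's use of a $\tfrac12$-net with \cite[Lemma~5.4]{vershynin_introduction_2011}), which needs only a constant-resolution net and no Lipschitz estimate.
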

\begin{proof} 
We follow a similar strategy to that of \cite[Section~A.4]{Candes_WF_2015}. By a homogeneity argument and isotropy of the Gaussian distribution, it is sufficient to establish the claim for $x=e_1$, \ie that
\begin{align}\label{eq:bndsspectmtxgaussian}
\normm{\som{|a_r[1]|^2a_r\transp{a_r}}-\paren{2e_1\transp{e_1}+\Id}}\leq \frac{\vrho}{3} .
\end{align}
Since the matrix in \eqref{eq:bndsspectmtxgaussian} is symmetric, its spectral norm can be computed via the associated quadratic form, and \eqref{eq:bndsspectmtxgaussian} amounts to showing that
\begin{align*}
V(v) \eqdef \left|\som{|a_r[1]|^2|\transp{a}_r v|^2}-\paren{1+2v[1]^2}\right| \leq \frac{\vrho}{3}
\end{align*}
for all $v\in\bbS^{n-1}$. The rest of the proof shows this claim.

Let $\wtilde{a}_r=\paren{a_r[2],\ldots,a_r[n]}$ and $\wtilde{v}=\paren{v[2],\ldots,v[n]}.$ We rewrite 
\[
|\transp{a}_rv|^2=\paren{a_r[1]v[1]+\transp{\wtilde{a}}_r\wtilde{v}}^2=\paren{a_r[1]v[1]}^2+\paren{\wtilde{a}_r^\top\wtilde{v}}^2+2a_r[1]v[1]\wtilde{a}_r^\top\wtilde{v} .
\]
We plug this decomposition into $V(v)$ to get 
\begin{align*}
V(v)
&=\left|\som{a_r[1]^4v[1]^2}+\som{a_r[1]^2(\wtilde{a}_r^\top\wtilde{v})^2}+2\som{|a_r[1]|^3v[1]\wtilde{a}_r^\top\wtilde{v}}-\paren{\normm{\tilde{v}}^2+3v[1]^2}\right|,\\
&\leq \left|\som{a_r[1]^4-3}\right|v[1]^2 +\left|\som{a_r[1]^2-1}\right|\normm{\wtilde{v}}^2+2\left|\som{|a_r[1]|^3v[1]\wtilde{a}_r^\top\wtilde{v}}\right| \\
&+\left|\som{a_r[1]^2\paren{\wtilde{a}_r^\top\wtilde{v}-\normm{\tilde{v}}^2}}\right|. 
\end{align*}
If $X \sim \mathcal{N}(0,1)$ we have $\esp{X^{2p}}=\frac{(2p)!}{2^pp!}$ for $p \in \N$, and in particular $\esp{X^2}=1$ and $\esp{X^4}=3$. By the Tchebyshev's inequality and a union bound argument, $\forall \eps>0,$ and a constant $C(\eps)\approx \max\paren{26,\frac{96}{\eps^2}}$ such that when  $m\geq C(\eps)n$ we have, 
\begin{align*}
\som{\paren{a_r[1]^4-3}}<\eps,\quad\som{\paren{a_r[1]^2-1}}<\eps,\quad \som{a_r[1]^6\leq20} \\
\qandq \max\limits_{1\leq r\leq m}|a_r[1]|\leq \sqrt{10\log{m}} .
\end{align*}
Each of these event happens with probability at least $1-\frac{1}{n^2}$, and thus their intersection occurs with a probability at least $1-\frac{4}{n^2}$. On this intersection event, we have 
\begin{align*}
V(v)\leq\eps(v[1]^2+\normm{\wtilde{v}}^2)+2\left|\som{a_r[1]^3v[1]\wtilde{a}_r^\top\wtilde{v}}\right|
+\left|\som{a_r[1]^2\paren{\wtilde{a}_r^\top\wtilde{v}-\normm{\tilde{v}}^2}}\right|.
\end{align*}
On the one hand, by a Hoeffding-type inequality (\cite[Proposition~5.10]{vershynin_introduction_2011}), we have 
\begin{align*}
\forall \vrho'>0,\quad \left|\som{a_r[1]^3v[1]\wtilde{a}_r^\top\wtilde{v}}\right|<\vrho'|v[1]|\normm{\wtilde{v}}^2, 
\end{align*}
with a probability $1-ee^{-\zeta' n}\geq 1-3e^{-\zeta' n}$, when $m\geq C(\vrho')\sqrt{n\sum_{r=1}^m a_r[1]^6}$ with $C(\vrho')\approx \frac{1}{\vrho'^2}$ and $\zeta'>2$  an absolute constant.\\
On the other hand, by Bernstein-type inequality (\cite[Proposition~5.16]{vershynin_introduction_2011}), we have 
\begin{align*}
\forall \vrho'>0,\quad \left|\som{a_r[1]^2\paren{\wtilde{a}_r^\top\wtilde{v}-\normm{\wtilde{v}}^2}}\right|\leq\vrho'\normm{\wtilde{v}}^2, 
\end{align*}
with a probability $1-2e^{-\zeta' n}$, when $m\geq C(\vrho')\paren{\sqrt{n\sum_{r=1}^m a_r[1]^4}+n\max\limits_{1\leq r\leq m}a_r[1]^2}$ with $C(\vrho')\approx \frac{1}{\vrho'^2}$.\\
Overall, for any $v\in \bbS^{n-1}$, we have with probability at least $1-5e^{-\zeta' n}$
\[
V(v)\leq \eps+3\vrho'. 
\]
At this stage, we use a covering argument (\cite[Lemma~5.4]{vershynin_introduction_2011}) with an $\frac{1}{2}-$net whose cardinality is smaller than $5^n$. Therefore, choosing $\eps=\vrho'$ and $\vrho=12\vrho'$ we get the claim where $\zeta=\zeta'-\log(5)>0$ since $\zeta'>2$ in the Hoeffding and Bernstein inequalities used above.
\end{proof}

\begin{lemma}\label{lem:conhess_G}\textbf{(Concentration of the Hessian)}
Fix $\vrho\in]0,1[$. If the number of samples obeys $m \geq C(\vrho)n\log n$, for some sufficiently large constant $C(\vrho) > 0$, then 
\begin{align}\label{eq: conhess_G}
\normm{\nabla^2f(x)-\esp{\nabla^2f(x)}}\leq \vrho\paren{\normm{x}^2+\frac{\normm{\avx}^2}{3}}
\end{align}
holds simultaneously for all $x\in\bbR^n$ with a probability at least $1-5e^{-\zeta n}-\frac{4}{n^2},$  where $\zeta$ is a fixed numerical constant. 
\end{lemma}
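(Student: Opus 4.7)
The plan is to notice that the Hessian deviation bound reduces, via linearity and the triangle inequality, to two applications of the previously established uniform bound in Lemma~\ref{lem:conhess_G_int}. Recalling from \eqref{hessg} and \eqref{eq:expe_lem_G} that
\[
\nabla^2 f(x) - \esp{\nabla^2 f(x)} = 3\,E_1(x) - E_1(\avx),
\]
where
\[
E_1(u) \eqdef \frac{1}{m}\sum_{r=1}^m |\transp{a_r}u|^2 a_r\transp{a_r} - \bigl(2u\transp{u} + \normm{u}^2 \Id\bigr),
\]
the problem boils down to controlling both $E_1(x)$ (uniformly in $x$) and $E_1(\avx)$ simultaneously on the same event.

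The first step is to invoke Lemma~\ref{lem:conhess_G_int}, which ensures that on an event $\calE$ of probability at least $1-5e^{-\zeta n} - 4/n^2$ (provided $m \geq C(\vrho) n\log n$), the bound
\[
\normm{E_1(u)} \leq \frac{\vrho}{3}\normm{u}^2
\]
holds simultaneously for every $u \in \bbR^n$. Since the statement of Lemma~\ref{lem:conhess_G_int} is already uniform in its argument, it applies both to the variable $x$ and to the fixed vector $\avx$ on the same event $\calE$, so no additional probabilistic reasoning is required.

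The second step is a simple triangle inequality on $\calE$:
\[
\normm{\nabla^2 f(x) - \esp{\nabla^2 f(x)}} \leq 3\normm{E_1(x)} + \normm{E_1(\avx)} \leq 3\cdot\frac{\vrho}{3}\normm{x}^2 + \frac{\vrho}{3}\normm{\avx}^2 = \vrho\Bigl(\normm{x}^2 + \frac{\normm{\avx}^2}{3}\Bigr),
\]
which is exactly \eqref{eq: conhess_G}. This holds uniformly in $x$ because the first term was already uniform and the second term does not depend on $x$.

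The main (and only non-trivial) obstacle has already been absorbed into Lemma~\ref{lem:conhess_G_int}, namely establishing a uniform-in-$x$ spectral bound via a Hoeffding/Bernstein-type concentration combined with a covering argument on the sphere. Once that uniform bound is in hand, the present lemma is essentially a bookkeeping step: algebraic decomposition of $\nabla^2 f(x) - \esp{\nabla^2 f(x)}$ into the ``$x$-dependent'' and ``$\avx$-dependent'' copies of $E_1$, followed by the triangle inequality, with the probability of the event being inherited unchanged from Lemma~\ref{lem:conhess_G_int}.
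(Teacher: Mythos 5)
Your proposal is correct and follows exactly the paper's own argument: decompose $\nabla^2 f(x)-\esp{\nabla^2 f(x)}$ via Lemma~\ref{lem:expe_lem_G} into the $x$-dependent and $\avx$-dependent empirical deviations, apply the uniform bound of Lemma~\ref{lem:conhess_G_int} to both on the same event, and conclude by the triangle inequality with the probability inherited unchanged.
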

\begin{proof} 
Recall $\nabla^2f(x)$ from \eqref{hessg}.
By the triangle inequality and Lemma~\ref{lem:expe_lem_G}, we have
\begin{align*}
\normm{\nabla^2f(x)-\esp{\nabla^2f(x)}}\leq &3\normm{\som{|\transp{a_r}x|^2a_r\transp{a_r}}-\paren{2x\transp{x}+\normm{x}^2\Id}}\\
+&\normm{\som{|\transp{a_r}\avx|^2a_r\transp{a_r}-\paren{2\avx\transp{\avx}+\normm{\avx}^2\Id}}}.
\end{align*}
The claim is then a consequence of Lemma~\ref{lem:conhess_G_int}.
\end{proof}

\subsubsection{Injectivity of the measurement operator}
The next result shows that when the number of measurements is large enough, the measurement matrix $A$ (whose rows are the $\transp{a_r}$'s) is injective \whp.
\begin{lemma}\label{pro:injectivity_G} 
Fix $\vrho\in]0,1[$. Assume that $m\geq\frac{16}{\vrho^2}n$. Then 
\begin{equation}\label{eq:injectivite_G}
\paren{1-\vrho}\normm{x}^2 \leq \frac{1}{m}\normm{Ax}^2 \leq (1+\vrho)\normm{x}^2, \quad \forall x\in\bbR^n .
\end{equation}
This happens with a probability at least $1-2e^{-mt^2/2}$ with $\frac{\vrho}{4}=t^2+t$. 
\end{lemma}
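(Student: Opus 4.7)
The plan is to recognize that the desired uniform two-sided inequality is exactly a restricted-isometry-type bound on the rescaled Gaussian matrix $A/\sqrt{m}$, and so the problem reduces to controlling the extreme singular values of $A$. Indeed, for any $x\in\bbR^n$ we have $\sigma_{\min}(A)^2\normm{x}^2\leq\normm{Ax}^2\leq\sigma_{\max}(A)^2\normm{x}^2$, so it suffices to exhibit a high-probability event on which $\sigma_{\min}(A)/\sqrt{m}$ and $\sigma_{\max}(A)/\sqrt{m}$ lie in an interval around $1$ of width controlled by $\vrho$. This is natural territory for standard Gaussian matrix theory, which is where the probability $1-2e^{-mt^{2}/2}$ comes from.

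Concretely, I would invoke the Davidson--Szarek deviation bound for standard Gaussian matrices (which follows from Gordon's min--max comparison theorem combined with Gaussian concentration applied to the $1$-Lipschitz functional $A\mapsto\sigma_{\max}(A)$, and symmetrically for $\sigma_{\min}$): for every $\tau\geq 0$,
\begin{equation*}
\sqrt{m}-\sqrt{n}-\tau\;\leq\;\sigma_{\min}(A)\;\leq\;\sigma_{\max}(A)\;\leq\;\sqrt{m}+\sqrt{n}+\tau
\end{equation*}
with probability at least $1-2e^{-\tau^{2}/2}$. Setting $\tau=t\sqrt{m}$ converts this into bounds on $\sigma_{\min,\max}(A)/\sqrt{m}$ with probability $1-2e^{-mt^{2}/2}$, matching the target exponent.

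The sample-complexity hypothesis $m\geq 16 n/\vrho^{2}$ then precisely ensures $\sqrt{n/m}\leq\vrho/4$, so that after dividing by $\sqrt{m}$ and squaring the singular value bounds one obtains, simultaneously for all $x\in\bbR^n$,
\begin{equation*}
(1-\vrho/4-t)^{2}\normm{x}^{2}\;\leq\;\frac{1}{m}\normm{Ax}^{2}\;\leq\;(1+\vrho/4+t)^{2}\normm{x}^{2}.
\end{equation*}
The final algebraic step is to translate the right-hand side $(1+\vrho/4+t)^{2}-1=2(\vrho/4+t)+(\vrho/4+t)^{2}$ into an upper bound of the form $\vrho$; unraveling this expansion shows that the calibration $\vrho/4=t^{2}+t$ in the statement is precisely what is needed to absorb the cross terms and conclude $\frac{1}{m}\normm{Ax}^{2}\leq(1+\vrho)\normm{x}^{2}$, and an entirely symmetric computation handles the lower bound.

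The argument is essentially textbook; the only thing that needs care is bookkeeping the constants so that the combination of (a) the singular-value inflation $\sqrt{n}/\sqrt{m}\leq\vrho/4$ coming from the sample complexity and (b) the Gaussian-concentration slack $t$ multiply out correctly under the chosen relation $\vrho/4=t(t+1)$. No deeper obstacle is expected, since no uniformity beyond what the singular-value bound already provides is required.
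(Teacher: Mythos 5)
Your route is the same as the paper's: the paper's ``proof'' of this lemma is a one-line appeal to standard deviation inequalities for the singular values of Gaussian matrices (with a pointer to \cite{candes_phaselift_2013}), and you have simply spelled out that Davidson--Szarek argument. The lower bound closes exactly as you describe: with $\tau=t\sqrt{m}$ and $\sqrt{n/m}\leq\vrho/4$ one gets $\sigma_{\min}(A)/\sqrt{m}\geq 1-\vrho/4-t$, and since $t\leq t+t^2=\vrho/4$, squaring gives $\paren{1-\vrho/4-t}^2\geq 1-2\paren{\vrho/4+t}\geq 1-\vrho$.

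However, your final assertion that the calibration $\vrho/4=t^2+t$ is ``precisely what is needed to absorb the cross terms'' in the upper bound does not survive the bookkeeping you defer. Writing $a=\vrho/4=t^2+t$, the total deviation is $a+t=t^2+2t$, so $\paren{1+a+t}^2=\paren{1+t}^4$, while $1+\vrho=1+4t+4t^2=\paren{1+2t}^2$; since $\paren{1+t}^2>1+2t$ for every $t>0$, the required inequality $\paren{1+a+t}^2\leq 1+\vrho$ fails. To close the upper bound one must either take a smaller deviation parameter in the Gaussian concentration step (i.e.\ a slightly different relation between $t$ and $\vrho$, at the cost of a slightly worse exponent) or a slightly larger constant in the sample-size requirement. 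This slack is arguably inherited from the lemma's own stated constants rather than from your strategy, and it is harmless downstream since the event $\calE_{inj}$ in \eqref{eq:injectivity_G} used in Theorem~\ref{thm:Gauss} involves only the lower bound; but as written your last paragraph claims a verification that does not in fact go through.
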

\begin{proof} 
This is a consequence of very standard deviation inequalities on the singular values of Gaussian random matrices; see \cite[Lemma~3.1]{candes_phaselift_2013} for a similar statement.
\end{proof}

\subsubsection{Relative smoothness}\label{PrLocalSmad}
For the Gaussian phase retrieval, we have the following refined dimension-independent estimate of the relative smoothness modulus, which is much better that the bound of Proposition~\ref{Tsmad}. 

\begin{lemma}\label{pro:Lsmad_G} 
Fix $\vrho\in]0,1[$. If the event $\calE_{\rm conH}$ defined by \eqref{eq:uniconcen_G} holds true then,
\begin{align}
D_f(x,z)\leq \Ppa{3+\vrho\max\pa{\normm{\avx}^2/3,1}}D_{\psi}(x,z), \qquad \forall x,z \in\bbR^n .  
\end{align}
\end{lemma}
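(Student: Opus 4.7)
The plan is to invoke the Bregman comparison Lemma~\ref{Bregcomp} with $g=f$ and $\phi=L\psi$, where $L=3+\vrho\max\pa{\normm{\avx}^2/3,1}$, which reduces the statement to the pointwise Hessian inequality
\[
\nabla^2 f(x) \lon L\,\nabla^2\psi(x), \qquad \forall x\in\bbR^n,
\]
under $\calE_{conH}$. The final conclusion then follows from linear additivity of the Bregman divergence (Proposition~\ref{pp:bregman}\ref{pp:bregman2}) exactly as in the proof of Lemma~\ref{Tsmad}.

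First I would write, for any $x\in\bbR^n$,
\[
\nabla^2 f(x) \lon \esp{\nabla^2 f(x)} + \vrho\Ppa{\normm{x}^2+\normm{\avx}^2/3}\Id,
\]
which follows from $\calE_{conH}$ and the fact that the spectral norm bound controls the symmetric matrix in the Loewner order. Next, using the closed form of $\esp{\nabla^2 f(x)}$ from Lemma~\ref{lem:expe_lem_G} and discarding the negative semidefinite contribution $-(2\avx\transp{\avx}+\normm{\avx}^2\Id)\lon 0$, I would observe
\[
\esp{\nabla^2 f(x)} \lon 3\paren{2x\transp{x}+\normm{x}^2\Id}=3\nabla^2\psi(x)-3\Id \lon 3\nabla^2\psi(x),
\]
where the identity uses \eqref{hessent}.

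The remaining step is the slightly more delicate part: absorbing the residual $\vrho\Ppa{\normm{x}^2+\normm{\avx}^2/3}\Id$ into a multiple of $\nabla^2\psi(x)$. Since $\nabla^2\psi(x)\lon(\normm{x}^2+1)\Id$, it suffices to verify the elementary inequality
\[
\normm{x}^2+\normm{\avx}^2/3 \leq \max\pa{\normm{\avx}^2/3,1}\paren{\normm{x}^2+1},
\]
which I would check by splitting on the cases $\normm{\avx}^2/3\geq 1$ and $\normm{\avx}^2/3<1$ (the inequality collapses to $1\leq \normm{\avx}^2/3$ and $\normm{\avx}^2/3\leq 1$, respectively, both trivially true). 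Combining the three ingredients yields
\[
\nabla^2 f(x) \lon 3\nabla^2\psi(x)+\vrho\max\pa{\normm{\avx}^2/3,1}\nabla^2\psi(x)=L\,\nabla^2\psi(x).
\]

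Finally, I would apply Lemma~\ref{Bregcomp} pointwise along the segment $[x,z]$ to get $D_f(x,z)\leq D_{L\psi}(x,z)$, and then use Proposition~\ref{pp:bregman}\ref{pp:bregman2} to rewrite $D_{L\psi}=L D_\psi$, obtaining the claimed inequality. The main conceptual point (and the only mildly non-routine step) is the max-based comparison between the perturbation term $\vrho(\normm{x}^2+\normm{\avx}^2/3)$ and the Hessian lower bound $\nabla^2\psi(x)\lon(\normm{x}^2+1)\Id$, which explains the shape of the constant $L$.
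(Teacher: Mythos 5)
Your proposal is correct and follows essentially the same route as the paper: bound $\nabla^2 f(x)$ by $\esp{\nabla^2 f(x)}+\vrho(\normm{x}^2+\normm{\avx}^2/3)\Id$ using $\calE_{conH}$, discard the negative semidefinite part of the expectation, absorb the residual via the elementary inequality $\normm{x}^2+\normm{\avx}^2/3\leq\max\pa{\normm{\avx}^2/3,1}(\normm{x}^2+1)$ and $(\normm{x}^2+1)\Id\lon\nabla^2\psi(x)$, then conclude with Lemma~\ref{Bregcomp}. The only cosmetic difference is that you separate the chain of Loewner inequalities into three named ingredients, whereas the paper writes it as a single display.
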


\begin{proof}
Using \eqref{eq:uniconcen_G}, Lemma~\ref{lem:expe_lem_G} and \eqref{hessent}, we have
\begin{align*}
\forall x\in\bbR^n,\quad \nabla^2f(x)&\preceq\esp{\nabla^2f(x)}+\vrho\para{\normm{x}^2+\frac{\normm{\avx}^2}{3}}\Id,\\
&\lon 3\paren{2x\transp{x}+\normm{x}^2\Id} -2\avx\transp{\avx}-\normm{\avx}^2\Id \\
&\quad + \vrho\max\pa{\normm{\avx}^2/3,1}\para{\normm{x}^2+1}\Id,\\
&\lon 3\paren{2x\transp{x}+(\normm{x}^2+1)\Id}+\vrho\max\pa{\normm{\avx}^2/3,1}\nabla^2\psi(x),\\
&= 3\nabla^2\psi(x)+\vrho\max\pa{\normm{\avx}^2/3,1}\nabla^2\psi(x).\label{taylo}\numberthis
\end{align*}
We conclude by applying Lemma~\ref{Bregcomp}.
\end{proof}

\subsubsection{Local relative strong convexity}\label{Localconvex}
The next proposition establishes strong convexity of $f$ \tcb{relative to $\psi$ on a sufficiently small ball around $\overline{\calX}$}. In view of strong $1$-convexity of $\psi$, \tcb{our result also implies strong convexity on the same ball} as shown in \cite{Candes_WF_2015,sun_geometric_2018}.

\begin{lemma}\label{pro:Localconvex_G}
Fix $\lambda \in ]0,1[$ and $\vrho\in]0,\lambda\min\pa{\normm{\avx}^2,1}/(2\max\pa{\normm{\avx}^2/3,1})$. If the event $\calE_{\rm conH}$ defined by \eqref{eq:uniconcen_G} holds true then for all $x,z\in B\BPa{\avx,\frac{1-\lambda}{\sqrt{3}}\normm{\avx}}$ and $x,z\in B\BPa{-\avx,\frac{1-\lambda}{\sqrt{3}}\normm{\avx}}$, 
\begin{align}
D_{f}(x,z) \geq \Ppa{\lambda\min\pa{\normm{\avx}^2,1}-\vrho\max\pa{\normm{\avx}^2/3,1}} D_{\psi}(x,z) .  
\end{align}
\end{lemma}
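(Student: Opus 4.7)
The plan is to reduce this Bregman-divergence inequality to a pointwise operator inequality between Hessians and then invoke Lemma~\ref{Bregcomp}. Since the ball $B\Ppa{\avx,\rho}$ with $\rho=\tfrac{1-\lambda}{\sqrt 3}\normm{\avx}$ is convex, any segment joining two of its points remains inside it, so by Lemma~\ref{Bregcomp} (applied with $g=f$ and $\phi=\sigma\psi$) and the linear additivity in Proposition~\ref{pp:bregman}\ref{pp:bregman2}, it suffices to establish
\[
\nabla^2 f(w)\slon \sigma\,\nabla^2\psi(w)\qquad \forall\,w\in B\Ppa{\avx,\rho},
\]
with $\sigma$ as defined in the statement.

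First, on $\calE_{conH}$, \eqref{eq:uniconcen_G} gives $\nabla^2 f(w)\slon\esp{\nabla^2 f(w)}-\vrho\para{\normm{w}^2+\normm{\avx}^2/3}\Id$. Introduce $\sigma_0\eqdef\lambda\min\pa{\normm{\avx}^2,1}$ and $M\eqdef\max\pa{\normm{\avx}^2/3,1}$, so that $\sigma=\sigma_0-\vrho M$. A short case check (separating $\normm{\avx}^2\geq 1$ from $\normm{\avx}^2<1$) yields $\normm{w}^2+\normm{\avx}^2/3\leq M\para{\normm{w}^2+1}$. Combined with the $1$-strong convexity of $\psi$, which via \eqref{hessent} implies $\para{\normm{w}^2+1}\Id\lon\nabla^2\psi(w)$, this delivers $\vrho\para{\normm{w}^2+\normm{\avx}^2/3}\Id\lon\vrho M\,\nabla^2\psi(w)$. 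The problem therefore reduces to the purely deterministic claim
\[
\esp{\nabla^2 f(w)}\slon \sigma_0\,\nabla^2\psi(w)\qquad \forall\,w\in B\Ppa{\avx,\rho}.
\]

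The heart of the proof is this deterministic step. Using Lemma~\ref{lem:expe_lem_G} for $\esp{\nabla^2 f(w)}$ and \eqref{hessent} for $\nabla^2\psi(w)$, the inequality amounts to showing that the symmetric matrix
\[
N(w)\eqdef(6-2\sigma_0)w\transp{w}-2\avx\transp{\avx}+\para{(3-\sigma_0)\normm{w}^2-\sigma_0-\normm{\avx}^2}\Id
\]
is positive semidefinite throughout $B\Ppa{\avx,\rho}$. I plan to test $N(w)$ on an arbitrary unit vector $v$, write $w=\avx+h$ with $\normm{h}\leq\rho$, and expand $\pscal{w,v}=\pscal{\avx,v}+\pscal{h,v}$. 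Applying the Young-type inequality $(a+b)^2\geq(1-\eta)a^2-(\eta^{-1}-1)b^2$ with the tight choice $\eta=(2-\sigma_0)/(3-\sigma_0)$ makes $(6-2\sigma_0)(1-\eta)=2$, so that the positive contribution $(6-2\sigma_0)\pscal{w,v}^2$ exactly neutralizes the indefinite cross term $-2\pscal{\avx,v}^2$, leaving a remainder controlled by $\normm{h}^2$. What is left is a scalar inequality of the form $(3-\sigma_0)\normm{w}^2-(\mathrm{cst})\normm{h}^2\geq\sigma_0+\normm{\avx}^2$, which is closed using $\normm{w}^2\geq(\normm{\avx}-\normm{h})^2$; the explicit value $\rho=\tfrac{1-\lambda}{\sqrt 3}\normm{\avx}$ is precisely what makes this bound hold, once the two regimes $\normm{\avx}^2\geq 1$ (so $\sigma_0=\lambda$) and $\normm{\avx}^2<1$ (so $\sigma_0=\lambda\normm{\avx}^2$) are treated separately.

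The main technical obstacle is this last quantitative verification, which hinges on the careful choice of the Young parameter and on the specific scaling of $\rho$. The companion statement on $B(-\avx,\rho)$ then follows by symmetry: since $y[r]$ depends only on $|\transp{a_r}\avx|^2$, the function $f$ is even, so $f(-w)=f(w)$; combined with the evenness of $\psi$, this gives $\nabla^2 f(-w)=\nabla^2 f(w)$ and $\nabla^2\psi(-w)=\nabla^2\psi(w)$, so the same pointwise Hessian inequality, and therefore the same Bregman bound, transfer to the ball around $-\avx$.
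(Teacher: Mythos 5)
Your overall skeleton matches the paper's: reduce the Bregman inequality to a pointwise Hessian inequality via Lemma~\ref{Bregcomp}, peel off the stochastic deviation using $\calE_{conH}$ together with $\normm{w}^2+\normm{\avx}^2/3\leq \max\pa{\normm{\avx}^2/3,1}\para{\normm{w}^2+1}$ and $\para{\normm{w}^2+1}\Id\lon\nabla^2\psi(w)$, and handle $-\avx$ by evenness. All of that is sound. The gap is in the step you yourself flag as the heart of the proof: the uniform operator inequality $\esp{\nabla^2 f(w)}\slon\sigma_0\,\nabla^2\psi(w)$ for \emph{all} $w\in B(\avx,\rho)$ and \emph{all} test directions is false at the radius $\rho=\frac{1-\lambda}{\sqrt3}\normm{\avx}$. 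Take $\normm{\avx}=1$, $\lambda=1/2$, $w=(1-\beta)\avx$ with $\beta=\frac{1-\lambda}{\sqrt3}\approx 0.289$, and $v\perp\avx$ (so $v\perp w$). Then $\transp{v}\esp{\nabla^2 f(w)}v=3\normm{w}^2-\normm{\avx}^2\approx 0.52$, while $\sigma_0\transp{v}\nabla^2\psi(w)v=\lambda\para{\normm{w}^2+1}\approx 0.75$, so $N(w)$ is not positive semidefinite. This is exactly where your closing scalar inequality breaks: in a tangential direction the Young step contributes nothing, and what remains is $(3-\sigma_0)\normm{w}^2-\sigma_0-\normm{\avx}^2$, which requires $(3-\sigma_0)(1-\beta)^2\geq 1+\sigma_0$ and fails for these values ($2.5\times 0.506\approx 1.27<1.5$). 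More generally, the linear term $-2(3-\sigma_0)\normm{\avx}\normm{h}$ hidden in $\normm{w}^2\geq(\normm{\avx}-\normm{h})^2$ already exceeds the available margin $2(1-\lambda)\normm{\avx}^2$ at $\normm{h}=\rho$, since $(3-\lambda)/\sqrt3>1$ for every $\lambda\in]0,1[$.

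The paper's computation avoids this configuration because it never asserts the full operator inequality on the ball: it parametrizes $x=\pm\avx+\rho v$ and evaluates the quadratic form only in the \emph{radial} direction $v=(x\mp\avx)/\rho$. In that coupled setting the dangerous cross term $\pm6(3-\lambda)\rho\,\transp{v}\avx$ sits next to the coercive term $2(2-\lambda)\paren{\transp{v}\avx}^2$ and is absorbed by completing the square in $\alpha=\transp{v}\avx/\normm{\avx}$; the failing case above (a radially contracted point tested in a tangential direction) never arises there. Your reduction therefore targets a strictly stronger statement than the one the paper actually verifies, and that stronger statement is false on this ball. To salvage your route you would either need a substantially smaller $\rho$, or you must keep track of the specific direction $x-z$ appearing in the integral representation of $D_f(x,z)-\sigma D_\psi(x,z)$ rather than demanding positive semidefiniteness of $N(w)$ in every direction.
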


Observe that if $\normm{\avx}=1$ the above result has a simpler statement. In particular, $\vrho$ must lie in $]0,\lambda[$, and the local relative strong convexity modulus is $\lambda-\vrho$ on a ball of radius $\frac{1-\lambda}{\sqrt{3}}$ around $\overline{\calX}$.

\begin{proof}
We embark from \eqref{eq:uniconcen_G} and Lemma~\ref{lem:expe_lem_G} to infer that $\forall x\in\bbR^n$
\begin{align}
\nabla^2f(x)
&\slon-\vrho\paren{\normm{x}^2+\frac{\normm{\avx}^2}{3}}\Id+3\Ppa{2x\transp{x}+\normm{x}^2\Id}-\paren{2\avx\transp{\avx}+\normm{\avx}^2\Id} \\
&\slon-\vrho\max\pa{\normm{\avx}^2/3,1}\nabla^2\psi(x)+3\Ppa{2x\transp{x}+\normm{x}^2\Id}-\paren{2\avx\transp{\avx}+\normm{\avx}^2\Id} .
\end{align}
We then obtain, for any $v\in\mathbb{S}^{n-1}$
\begin{align*}
\transp{v}\nabla^2f(x)v + \vrho\max\pa{\normm{\avx}^2/3,1}\transp{v}\nabla^2\psi(x)v
&\geq 3\Ppa{2\paren{\transp{v}x}^2+\normm{x}^2}-\Ppa{2\paren{\transp{v}\avx}^2+\normm{\avx}^2} .
\end{align*}
Let $\rho>0$ small enough, to be made precise later. Thus for any $x=\pm\avx+\rho v$ we get
\begin{align*}
&\transp{v}\nabla^2f(x)v + \vrho\max\pa{\normm{\avx}^2/3,1}\transp{v}\nabla^2\psi(x)v\\
&\geq 6\paren{\transp{v}\avx}^2+6\rho^2\pm12\rho\transp{v}\avx+3\normm{\avx}^2\pm6\rho\transp{v}\avx+3\rho^2-2\paren{\transp{v}\avx}^2-\normm{\avx}^2 \\
&= 4\paren{\transp{v}\avx}^2+9\rho^2\pm18\rho\transp{v}\avx+2\normm{\avx}^2 .
\end{align*}
From \eqref{hessent}, we also have 
\begin{align*}
\transp{v}\nabla^2\psi(x)v&=\normm{x}^2+1+2\paren{\transp{v}x}^2= 2\paren{\transp{v}\avx}^2 +3 \rho^2 + \pm6\rho\transp{v}\avx + \normm{\avx}^2 + 1 . 
\end{align*}
Consider first the case where $\normm{\avx} \geq 1$. We then get
\begin{align*}
&\transp{v}\para{\nabla^2f(x)-\Ppa{\lambda-\vrho\max\pa{\normm{\avx}^2/3,1}}\nabla^2\psi(x)}v \\
&\geq 2(2-\lambda)\paren{\transp{v}\avx}^2+3(3-\lambda)\rho^2 \pm 6(3-\lambda)\rho\transp{v}\avx+(2-\lambda)\normm{\avx}^2-\lambda \\
&= 2(2-\lambda)\paren{\transp{v}\avx}^2+3(3-\lambda)\rho^2 \pm 6(3-\lambda)\rho\transp{v}\avx+2(1-\lambda)\normm{\avx}^2+\lambda(\normm{\avx}^2-1) \\
&\geq 2(2-\lambda)\paren{\transp{v}\avx}^2+3(3-\lambda)\rho^2 \pm 6(3-\lambda)\rho\transp{v}\avx+2(1-\lambda)\normm{\avx}^2 .
\end{align*}
We claim that
\[
\inf_{v \in \mathbb{S}^{n-1}} 2(2-\lambda)\paren{\transp{v}\avx}^2+3(3-\lambda)\rho^2 \pm 6(3-\lambda)\rho\transp{v}\avx+2(1-\lambda)\normm{\avx}^2 \geq 0
\]
for $\rho$ small enough. Let $\transp{v}\avx = \alpha\normm{\avx}$, where $\alpha \in [-1,1]$ and $\rho = \beta \normm{\avx}$. Thus
\begin{multline*}
2(2-\lambda)\paren{\transp{v}\avx}^2+3(3-\lambda)\rho^2 \pm 6(3-\lambda)\rho\transp{v}\avx+2(1-\lambda)\normm{\avx}^2 = \\
\Ppa{2(2-\lambda)\alpha^2 + 3(3-\lambda)\beta^2 \pm 6(3-\lambda)\alpha\beta + 2(1-\lambda)}\normm{\avx}^2 .
\end{multline*}
Minimizing the last term for $\alpha$ and substituting back, we have after simple algebra that
\begin{align*}
2(2-\lambda)\alpha^2 + 3(3-\lambda)\beta^2 \pm 6(3-\lambda)\alpha\beta + 2(1-\lambda) \geq 2(1-\lambda) - \phi(\lambda)\beta^2 .
\end{align*}
where we set the function $\phi: t \in ]0,1[ \mapsto \frac{36(3-t)^2}{8(2-t)}-3(3-t) \in \bbR_+$. It can be easily shown that $\sup_{]0,1[}\phi(t) = \phi(1)=12$. In turn, we have
\begin{align*}
2(2-\lambda)\alpha^2 + 3(3-\lambda)\beta^2 \pm 6(3-\lambda)\alpha\beta + 2(1-\lambda) \geq 0
\end{align*}
since we assumed that $\rho \leq \frac{1-\lambda}{\sqrt{3}}\normm{\avx} \leq \frac{2(1-\lambda)}{\sqrt{\phi(\lambda)}}\normm{\avx}$.\\

Let us now turn to the case where $\normm{\avx} \leq 1$. We then have
\begin{align*}
&\transp{v}\para{\nabla^2f(x)-\lambda\Ppa{\normm{\avx}^2-\vrho\max\pa{\normm{\avx}^2/3,1}}\nabla^2\psi(x)}v \\
&\geq 2(2-\lambda\normm{\avx}^2)\paren{\transp{v}\avx}^2+3(3-\lambda\normm{\avx}^2)\rho^2 \pm 6(3-\lambda\normm{\avx}^2)\rho\transp{v}\avx+2\normm{\avx}^2-\lambda\normm{\avx}^4-\lambda\normm{\avx}^2 \\
&\geq 2(2-\lambda\normm{\avx}^2)\paren{\transp{v}\avx}^2+3(3-\lambda\normm{\avx}^2)\rho^2 \pm 6(3-\lambda\normm{\avx}^2)\rho\transp{v}\avx+2(1-\lambda)\normm{\avx}^2+\lambda\Ppa{\normm{\avx}^2-\normm{\avx}^4} \\
&\geq 2(2-\lambda\normm{\avx}^2)\paren{\transp{v}\avx}^2+3(3-\lambda\normm{\avx}^2)\rho^2 \pm 6(3-\lambda\normm{\avx}^2)\rho\transp{v}\avx+2(1-\lambda)\normm{\avx}^2 \\
&= \Ppa{2(2-\lambda\normm{\avx}^2)\alpha^2+3(3-\lambda\normm{\avx}^2)\beta^2 \pm 6(3-\lambda\normm{\avx}^2)\alpha\beta+2(1-\lambda)}\normm{\avx}^2 .
\end{align*}
Arguing as in the first case, we have
\[
2(2-\lambda\normm{\avx}^2)\alpha^2+3(3-\lambda\normm{\avx}^2)\beta^2 \pm 6(3-\lambda\normm{\avx}^2)\alpha\beta+2(1-\lambda) \geq 2(1-\lambda) - \phi(\lambda\normm{\avx}^2)\beta^2 .
\]
Thus, the right hand side is non-negative since 
\[
\beta \leq \frac{1-\lambda}{\sqrt{3}} \leq \frac{2(1-\lambda)}{\sqrt{\phi(\lambda\normm{\avx}^2)}} ,
\]
where we used that $\normm{\avx}^2 \leq 1$ in the argument of $\phi$.
\\
Overall, we have shown that 
\[
\transp{v}\para{\nabla^2f(x)-\Ppa{\lambda\min\pa{\normm{\avx}^2,1}/2-\vrho\max\pa{\normm{\avx}^2/3,1}}\nabla^2\psi(x)}v \geq 0
\]
for all $v \in \mathbb{S}^{n-1}$ and $\rho \leq \frac{1-\lambda}{\sqrt{3}}\normm{\avx}$. We complete the proof by invoking Lemma~\ref{Bregcomp} and convexity of the ball.
\end{proof}

\subsubsection{Spectral initialization}\label{Prpro:spectralinit}
We now show that the initial guess $\xo$ generated by spectral initialization (Algorithm~\ref{alg:algoSPIniit}) belongs to a small $f$-attentive neighborhood of $\overline{\calX}$.
\begin{lemma}\label{pro:spectralinit}
Fix $\vrho\in ]0,1[$. If the number of samples obeys $m \geq C(\vrho)n\log n$, for some sufficiently large constant $C(\vrho) > 0$, then with probability at least $1-2e^{-\frac{m\pa{\sqrt{1+\vrho}-1}^2}{8}}-5e^{-\zeta n}-\frac{4}{n^2}$,  where $\zeta$ is a fixed numerical constant, $\xo$ satisfies:
\begin{enumerate}[label=(\roman*)]
\setlength{\itemindent}{0.4cm}
\item \label{pro:spectralinit-i} 
$\dist(\xo,\overline{\calX}) \leq \eta_1(\vrho)\normm{\avx}$, 
where 
\begin{alignat}{2}
\eta_1\colon ]0,1[ & \to {}&& ]0,1[ \nonumber\\
		\vrho & \mapsto {}&& \Ppa{\sqrt{2-2\sqrt{1-\vrho}} + \vrho/2} , \label{radius}
\end{alignat}
which is an increasing function.

\item \label{pro:spectralinit-ii} $f(\xo)\leq\Ppa{3+\vrho\max\pa{\normm{\avx}^2/3,1}}\frac{\Theta(\eta_1(\vrho)\normm{\avx})}{2}\eta_1(\vrho)^2\normm{\avx}^2$.
        
\item \label{pro:spectralinit-iii} Besides, for $\lambda \in ]0,1[$, if 
\begin{equation}\label{eq:spectralinitvrhobnd}
\vrho\leq\eta_1^{-1}\Ppa{\frac{1-\lambda}{\sqrt{3\Ppa{6(1+(1-\lambda)^2/3)+1}}}\frac{1}{\max\Ppa{\normm{\avx},1}}} ,
\end{equation} 
then with the same probability as above $\xo \in B\Ppa{\overline{\calX},\frac{\rho}{\max\Ppa{\sqrt{\Theta(\rho)},1}}}$
where $\rho=\frac{1-\lambda}{\sqrt{3}}\normm{\avx}$. 
\end{enumerate}
\end{lemma}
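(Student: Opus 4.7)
My plan is to decompose the initialization error as $\xo - \avx = (\lambda - \normm{\avx})z_1 + \normm{\avx}\Ppa{z_1 - \avx/\normm{\avx}}$, where $z_1$ is the unit top eigenvector of $Y = \som{y[r]a_r\transp{a_r}}$ (with sign chosen so that $\transp{z_1}\avx\geq 0$), and bound each summand by a dedicated concentration argument. Since $y[r]=|\transp{a_r}\avx|^2$, Lemma~\ref{lem:expe_lem_G} gives $\esp{Y} = 2\avx\transp{\avx}+\normm{\avx}^2\Id$ with leading eigenpair $(3\normm{\avx}^2, \avx/\normm{\avx})$ and spectral gap $2\normm{\avx}^2$, while Lemma~\ref{lem:conhess_G_int} applied at $x=\avx$ yields $\normm{Y-\esp{Y}} \leq (\vrho/3)\normm{\avx}^2$ on a high-probability event.

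For the eigenvector part, I would combine the Weyl-type upper bound $\lambda_1(Y) \leq \transp{z_1}\esp{Y}z_1 + \normm{Y-\esp{Y}} = 2(\transp{z_1}\avx)^2 + \normm{\avx}^2 + \normm{Y-\esp{Y}}$ with the lower bound $\lambda_1(Y) \geq \lambda_1(\esp{Y}) - \normm{Y-\esp{Y}} = 3\normm{\avx}^2 - \normm{Y-\esp{Y}}$; after sign normalization this yields $\transp{z_1}\avx \geq \sqrt{1-\vrho}\,\normm{\avx}$ (up to reparametrizing $\vrho$ by an absolute constant), hence $\normm{z_1 - \avx/\normm{\avx}}^2 \leq 2 - 2\sqrt{1-\vrho}$. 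For the scaling factor $\lambda^2 = n\som{y[r]}/\som{\normm{a_r}^2}$, Lemma~\ref{pro:injectivity_G} applied at $\avx$ controls $\som{y[r]} = \frac{1}{m}\normm{A\avx}^2$ around $\normm{\avx}^2$ with relative error $\vrho$, and standard $\chi^2_n$ concentration controls $\som{\normm{a_r}^2}$ around $n$ at an additional cost $O(e^{-cm})$; together they give $|\lambda - \normm{\avx}| \leq (\vrho/2)\normm{\avx}$. The triangle inequality on the decomposition above then closes claim \ref{pro:spectralinit-i}, and the monotonicity of $\eta_1$ is elementary calculus.

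Claim \ref{pro:spectralinit-ii} follows by observing that $\avx\in\Argmin(f)$ with $f(\avx)=0$ and $\nabla f(\avx)=0$, hence $f(\xo)=D_f(\xo,\avx)$; combining the $L$-relative smoothness from Lemma~\ref{pro:Lsmad_G} with the upper Bregman bound of Proposition~\ref{pp:bregman}\ref{pp:bregman4} applied to the convex compact set $B(\avx,\eta_1(\vrho)\normm{\avx})$ gives the stated inequality through the distance bound of \ref{pro:spectralinit-i}. For claim \ref{pro:spectralinit-iii}, the goal is $\eta_1(\vrho)\normm{\avx} \leq \rho/\max(\sqrt{\Theta(\rho)},1)$ with $\rho=(1-\lambda)\normm{\avx}/\sqrt{3}$; using $\Theta(\rho) \leq 6\normm{\avx}^2 + 6\rho^2 + 1 \leq (6(1+(1-\lambda)^2/3)+1)\max(\normm{\avx}^2,1)$ from Remark~\ref{rem:deterministic} and inverting the monotone $\eta_1$ produces exactly \eqref{eq:spectralinitvrhobnd}. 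The claimed probability comes from a union bound over the events in Lemmas~\ref{lem:conhess_G_int} and~\ref{pro:injectivity_G} and the $\chi^2$-concentration.

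The main obstacle I anticipate is the sharp constant-tracking needed to make the eigenvector tilt come out to exactly $\sqrt{2-2\sqrt{1-\vrho}}$ and the scaling error to exactly $\vrho/2$: Lemma~\ref{lem:conhess_G_int} provides $\vrho/3$ while the eigenvector sandwich argument consumes a constant factor, so the cleanest approach is to restate Lemma~\ref{lem:conhess_G_int} with a rescaled $\vrho$ so that $\normm{Y-\esp Y}\leq \vrho\normm{\avx}^2$. The eigenvector-perturbation step is the most delicate, as it combines Weyl's inequality with a Rayleigh-quotient lower bound and extracts $\transp{z_1}\avx$ by exploiting the precise quadratic structure of $\esp{Y}$; the chi-squared concentration for $\som{\normm{a_r}^2}$ also needs care since it is the only auxiliary bound not provided in the paper.
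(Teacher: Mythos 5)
Your proposal is correct and follows essentially the same route as the paper: the same split of $\xo-\avx$ into an eigenvector-tilt term (controlled via the Rayleigh-quotient/Weyl sandwich around $\esp{Y}=2\avx\transp{\avx}+\normm{\avx}^2\Id$, yielding $(\transp{z_1}\avx)^2\geq(1-\vrho)\normm{\avx}^2$) and a norm-mismatch term controlled by Lemma~\ref{pro:injectivity_G}, followed by the identical relative-smoothness argument for \ref{pro:spectralinit-ii} and the identical $\Theta(\rho)$ bound for \ref{pro:spectralinit-iii}. The only (harmless) differences are that you explicitly add a $\chi^2$-concentration step for $\sum_r\normm{a_r}^2$, which the paper's proof silently absorbs by writing $\xo=\sqrt{m^{-1}\sum_r y[r]}\,\wtilde{x}/\normm{\avx}$, and your worry about rescaling the $\vrho/3$ from Lemma~\ref{lem:conhess_G_int} is unnecessary since the sandwich loses no constant.
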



\begin{proof}{\ }
\begin{enumerate}[label=(\roman*)]
  \setlength{\itemindent}{0.4cm}
\item 
Denote the matrix
\[
Y=\som{y[r]a_r\transp{a_r}}=\som{|\transp{a_r}\avx|^2a_r\transp{a_r}}.
\]
By Lemma~\ref{lem:conhess_G_int}, we have \whp 
\[
\normm{Y-\esp{Y}}\leq\vrho \normm{\avx}^2.
\]
Let $\wtilde{x}$ be the eigenvector associated with the largest eigenvalue $\wtilde{\lambda}$ of $Y$ such that $\normm{\wtilde{x}}=\normm{\avx}$ (obviously $\wtilde{\lambda}$ is nonnegative since $Y$ is semidefinite positive). Then, 
\begin{align*}
\vrho \normm{\avx}^2
\geq\normm{Y-\mathbb{E}(Y)}
&\geq \normm{\avx}^{-2}\abs{\transp{\wtilde{x}}\paren{Y-2\avx\transp{\avx}-\normm{\avx}^2\Id}\wtilde{x}} \\
&= \normm{\avx}^{-2}\abs{\wtilde{\lambda}\normm{\avx}^2 -2\pa{\transp{\wtilde{x}}\avx}^2-\normm{\avx}^4} .
\end{align*}
Hence 
\[
2\pa{\transp{\wtilde{x}}\avx}^2 \geq \wtilde{\lambda}\normm{\avx}^2  - (1 + \vrho)\normm{\avx}^4 .
\]
Moreover, using Lemma~\ref{lem:conhess_G_int} again entails that \whp
\[
\wtilde{\lambda}\normm{\avx}^2 \geq \transp{\avx}Y\avx \geq \transp{\avx}\paren{2\avx\transp{\avx}+\normm{\avx}^2\Id}\avx - \vrho\normm{\avx}^4 = (3 - \vrho)\normm{\avx}^4 .
\]
Combining the last two inequalities, we get
\[
\pa{\transp{\wtilde{x}}\avx}^2 \geq (1-\rho)\normm{\avx}^4 .
\]
It then follows that
\[
\dist(\wtilde{x},\overline{\calX}) \leq \sqrt{2 - 2\sqrt{1-\rho}}\normm{\avx} .
\]
By definition of $\xo$ in Algorithm~\ref{alg:algoSPIniit}, $\xo=\sqrt{m^{-1}\sum_r y[r]} \frac{\wtilde{x}}{\normm{\avx}}$, and thus \whp
\[
\normm{\xo-\wtilde{x}} = \abs{\sqrt{\frac{m^{-1}\sum_r y[r]}{\normm{\avx}^2}} - 1}\normm{\avx} = \abs{\sqrt{\frac{m^{-1}\normm{A\avx}^2}{\normm{\avx}^2}} - 1}\normm{\avx} \leq \vrho/2\normm{\avx} ,
\]
where we used Lemma~\ref{pro:injectivity_G}. In turn,
\[
\dist(\xo,\overline{\calX}) \leq \dist(\wtilde{x},\overline{\calX}) + \normm{\xo-\wtilde{x}} \leq \Ppa{\sqrt{2-2\sqrt{1-\vrho}} + \vrho/2}\normm{\avx} .
\]

\item Under our sampling complexity bound, event $\calE_{\rm conH}$ defined by \eqref{eq:uniconcen_G} holds true \whp. It then follows from Lemma~\ref{pro:Lsmad_G} applied at $\avx$ and $\xo$, that
\begin{align}
D_{f}(\xo,\avx) \leq \Ppa{3+\vrho\max\pa{\normm{\avx}^2/3,1}}D_{\psi}(\xo,\avx) .
\end{align}
Since $f(\avx)=0$ and $\nabla f(\avx)=0$, we obtain from Proposition~\ref{pp:bregman}\ref{pp:bregman4} that
\begin{align}
f(\xo) 
&\leq \Ppa{3+\vrho\max\pa{\normm{\avx}^2/3,1}}D_{\psi}(\xo,\avx) \nonumber\\
&\leq \Ppa{3+\vrho\max\pa{\normm{\avx}^2/3,1}}\frac{\Theta(\eta_1(\vrho)\normm{\avx})}{2}\eta_1(\vrho)^2\normm{\avx}^2 .
\end{align}

\item In view of \ref{pro:spectralinit-i}, it is sufficient to show that $\eta_1(\vrho)\normm{\avx} \leq \frac{\rho}{\max\Ppa{\sqrt{\Theta(\rho)},1}}$. Since from Proposition~\ref{pp:bregman}\ref{pp:bregman4} (see also Remark~\ref{rem:deterministic}) we have 
\[
\Theta(\rho) \leq 6(\normm{\avx}^2+\rho^2) + 1 \leq \Ppa{6(1+(1-\lambda)^2/3) + 1}\max\Ppa{\normm{\avx}^2,1} ,
\]
and $\eta_1$ is an increasing function, we conclude.
\end{enumerate}
\end{proof}

\subsection{Proofs for the CDP model}
In this section, we assume that the sensing vectors $(a_r)_{r\in \tcb{\bbrac{m}}}$ follow the CDP model introduced in  Section~\ref{framework}.

\subsubsection{Expectation and deviation of the Hessian}
\begin{lemma}\label{lem:expe_lem_cdp}\textbf{(Expectation of the Hessian)}{\ }
 Under the CDP measurement model, the following holds 
  \begin{align}\label{eq:expe_lem_cdp}
   \esp{\nabla^2 f(x)}=3\para{x\transp{x}+ \normm{x}^2\Id}-\avx\transp{\avx}-\normm{\avx}^2\Id. 
   \end{align}
\end{lemma}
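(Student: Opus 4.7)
The plan is to mirror the Gaussian derivation used in Lemma~\ref{lem:expe_lem_G}. In view of the expression of the Hessian in \eqref{hessg}, it suffices to compute $\esp{\som{|\adj{a_r}u|^{2}\,a_r\adj{a_r}}}$ for $u=x$ and $u=\avx$ and then combine the two with the coefficients $3$ and $-1$. I would exploit the CDP structure by reindexing $r=(j,p)$ and noting that $\adj{a_{(j,p)}}u = (\calF D_p u)[j]$. For each fixed mask index $p$, this yields the compact Fourier-sandwich form
\begin{equation*}
\sum_{j=0}^{n-1}|\adj{a_{(j,p)}}u|^{2}\,a_{(j,p)}\adj{a_{(j,p)}} \;=\; D_p\,\adj{\calF}\,\mathrm{diag}\pa{|\calF D_p u|^{2}}\,\calF D_p,
\end{equation*}
where $\calF$ denotes the unnormalized DFT matrix, so that $\adj{\calF}\calF=n\Id$.

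Next I would expand this matrix entry by entry. Its $(k,\ell)$ coefficient reads
\begin{equation*}
d_p[k]\,d_p[\ell]\sum_{a,b}d_p[a]\,d_p[b]\,u[a]\,u[b]\sum_{j=0}^{n-1}e^{i2\pi j(k-\ell+b-a)/n},
\end{equation*}
and the inner Fourier sum equals $n$ whenever $b-a\equiv \ell-k \pmod{n}$ and vanishes otherwise. Taking expectation and invoking Assumption~\ref{assum:CDP}, together with the independence and symmetry of the entries of $d$ (which give $\esp{d[\ell]}=\esp{d[\ell]^{3}}=0$, $\esp{d[\ell]^{2}}=1$ and $\esp{d[\ell]^{4}}=2$), yields the Isserlis-type identity
\begin{equation*}
\esp{d[k]\,d[\ell]\,d[a]\,d[b]} \;=\; \delta_{k\ell}\delta_{ab}+\delta_{ka}\delta_{\ell b}+\delta_{kb}\delta_{\ell a}-\delta_{\{k=\ell=a=b\}}.
\end{equation*}

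Plugging this in and splitting on the diagonal, the case $k=\ell$ forces $a=b$ in the constraint and the resulting sum collapses to $n(\normm{u}^{2}+u[k]^{2})$ (the correction term cancels the double-counting of $a=k$), while for $k\neq\ell$ only the pairing $(a,b)=(k,\ell)$ generically satisfies the constraint and contributes $n\,u[k]u[\ell]$. One thus obtains
\begin{equation*}
\esp{\sum_{j=0}^{n-1}|\adj{a_{(j,p)}}u|^{2}\,a_{(j,p)}\adj{a_{(j,p)}}} \;=\; n\paren{u\transp{u}+\normm{u}^{2}\Id}.
\end{equation*}
Summing over $p\in\{0,\ldots,P-1\}$ and dividing by $m=nP$ yields $\esp{\som{|\adj{a_r}u|^{2}\,a_r\adj{a_r}}} = u\transp{u}+\normm{u}^{2}\Id$, and inserting this for $u=x$ and $u=\avx$ in \eqref{hessg} delivers \eqref{eq:expe_lem_cdp}. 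The main technical point I expect to require care is the fourth-moment identity above: the condition $\esp{d^{4}}=2\esp{d^{2}}^{2}$ in Assumption~\ref{assum:CDP} is tuned precisely so that the correction coefficient $\mu_{4}-3\mu_{2}^{2}=-1$, which is exactly what erases, after Fourier summation, an otherwise spurious diagonal term and produces the clean rank-one-plus-scaled-identity expectation.
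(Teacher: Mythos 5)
Your route is genuinely different from the paper's: the paper disposes of this lemma in two lines by quoting the identity $\esp{\frac{1}{nP}\sum_{j,p}|\adj{f}_jD_px|^2D_pf_j\adj{f}_jD_p}=x\transp{x}+\normm{x}^2\Id$ from \cite[Lemma~3.1]{candes_phase_2015} and plugging it into \eqref{hessg}, whereas you rederive that identity from scratch via the Fourier-sandwich expansion and a fourth-moment computation. Your setup is correct (the entrywise expansion, the constraint $b-a\equiv \ell-k \pmod n$ from the geometric sum, the Isserlis-type identity with correction coefficient $\esp{d^4}-3\esp{d^2}^2=-1$, and the diagonal case all check out), and making the computation explicit is in principle more informative than the citation.

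However, there is a genuine gap in your off-diagonal case, hidden in the word ``generically''. For $k\neq\ell$, the only pairings with nonzero expectation are $(a,b)=(k,\ell)$ and $(a,b)=(\ell,k)$. The first always satisfies $b-a\equiv\ell-k$, but the second satisfies $b-a=k-\ell\equiv\ell-k\pmod n$ exactly when $2(\ell-k)\equiv 0\pmod n$, i.e.\ for \emph{even} $n$ at the antipodal offsets $\ell-k\equiv n/2$. There it contributes an additional $\esp{d[k]^2}\esp{d[\ell]^2}\,u[k]u[\ell]=u[k]u[\ell]$, so those entries come out as $2n\,u[k]u[\ell]$ rather than $n\,u[k]u[\ell]$. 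A direct check with $n=2$ confirms this: the $(0,1)$ entry of $\sum_j|\adj{a}_ju|^2a_j\adj{a}_j$ equals $4d_0^2d_1^2u_0u_1$, whose expectation is $4u_0u_1$, not the $2u_0u_1$ predicted by $n(u\transp{u}+\normm{u}^2\Id)$. This is precisely where the real and complex CDP models differ: in the complex model of \cite{candes_phase_2015} the reversed pairing is killed by $\esp{d^2}=0$, while in the real model $\esp{d^2}=1$ and the term survives. So your derivation, carried out rigorously, does not yield the clean rank-one-plus-identity formula for even $n$; you would need either to restrict to odd $n$, add the antipodal correction term, or identify an extra assumption on $d$ that removes it. (This also signals that the paper's own ``slight adaptation to the real case'' of the cited complex lemma deserves more care than the citation suggests.)
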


\begin{proof}


From \cite[Lemma~3.1]{candes_phase_2015}, we have
\begin{align}
 \forall x\in \bbR^n,\quad \esp{\frac{1}{nP}\sum_{j,p=1}^{n,P}|\adj{f}_jD_px|^2D_pf_j\adj{f}_jD_p}=x\transp{x}+\normm{x}^2\Id.
\end{align}
Combining this with \eqref{hessg} yields the claim.
\end{proof}

Unlike the Gaussian model, it turns out that it is very challenging to concentrate the Hessian of $f$ around its mean simultaneously for all vectors $x \in \bbR^n$ with non-trivial sampling complexity bounds. The main reason is that the CDP model does not have enough randomness to be used in the mathematical analysis. However, one can still do that for a fixed vector $x$. The next lemma gives the Hessian deviation at $\pm\avx$.

\begin{lemma}\label{lem:conhess_cdp}\textbf{(Concentration of the Hessian)}
Fix  $\delta \in]0,1[$. If the number of patterns obeys $P\geq C(\delta)\log^3(n)$, then with a probability at least $1-\frac{4P+1}{2n^3}$
\begin{align}\label{eq:uniconcen_cdp}
\normm{\nabla^2f(\avx)-\esp{\nabla^2f(\avx)}}\leq \delta\normm{\avx}^2 .
\end{align}
\end{lemma}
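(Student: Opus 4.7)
\textbf{Proof proposal for Lemma~\ref{lem:conhess_cdp}.}

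The plan is to view the Hessian deviation as a sum of independent random matrices indexed by the $P$ modulation patterns and then apply a matrix Bernstein inequality (for instance \cite{tropp_introduction_2015}) together with a truncation argument. Using \eqref{hessg} evaluated at $\avx$ and the CDP structure $a_{j,p} = D_p f_j$, set
\begin{equation*}
Z_p \eqdef \frac{1}{n} D_p F^* \mathrm{diag}\Ppa{|FD_p\avx|^2} F D_p = \frac{1}{n} \sum_{j=0}^{n-1} |\adj{f_j}D_p\avx|^2 D_p f_j \adj{f_j} D_p ,
\end{equation*}
so that $\nabla^2 f(\avx) = \frac{2}{P}\sum_{p=1}^{P} Z_p$, and by Lemma~\ref{lem:expe_lem_cdp} combined with independence of the $d_p$'s, $\esp{Z_p} = \avx\transp{\avx}+\normm{\avx}^2\Id$. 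The target inequality then reduces to showing $\normm{\frac{1}{P}\sum_p (Z_p - \esp{Z_p})} \leq \frac{\delta}{2}\normm{\avx}^2$ with the stated probability.

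The first step is to derive a deterministic bound on $\normm{Z_p}$ conditionally on a good event. Since $\normm{D_p} \leq M$, $\normm{F}=\sqrt{n}$ (unitary-type normalization) and $\normm{\mathrm{diag}(|FD_p\avx|^2)} = \normm{FD_p\avx}_\infty^2$, one obtains $\normm{Z_p} \leq M^2\normm{FD_p\avx}_\infty^2$. Because the entries of $FD_p\avx$ are zero-mean sums of independent bounded random variables, a Hoeffding bound applied coordinate-wise together with a union bound over the $n$ coordinates and $P$ patterns gives, with probability at least $1 - 4P/n^3$, that $\normm{FD_p\avx}_\infty \leq C_1 \sqrt{\log n}\,\normm{\avx}$ simultaneously for all $p \in [P]$. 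Call this event $\calE_{\mathrm{trunc}}$. On $\calE_{\mathrm{trunc}}$ we have the uniform bound $R \eqdef \sup_p \normm{Z_p - \esp{Z_p}} \lesssim \log(n)\,\normm{\avx}^2$, and a parallel computation of $\esp{\normm{Z_p}^2 \,\vert\, \calE_{\mathrm{trunc}}}$ yields a matrix variance proxy $\sigma^2 \lesssim \log(n)\,\normm{\avx}^4$.

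The second step is matrix Bernstein: conditional on $\calE_{\mathrm{trunc}}$,
\begin{equation*}
\Pr\Ppa{\normm{\tfrac{1}{P}\sum_{p=1}^P (Z_p-\esp{Z_p})} \geq t} \leq 2n \exp\Ppa{-\frac{P\, t^2/2}{\sigma^2 + R t/3}} .
\end{equation*}
Setting $t = \frac{\delta}{2}\normm{\avx}^2$ and choosing $P \geq C(\delta)\log^3(n)$ with $C(\delta)$ large enough to absorb both the $\sigma^2$ and the $R t$ terms, the right-hand side is bounded by $1/(2n^3)$. A final union bound with $\calE_{\mathrm{trunc}}$ yields failure probability at most $(4P+1)/(2n^3)$, which is exactly the stated bound.

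The main obstacle is the truncation/variance bookkeeping: matrix Bernstein demands almost-sure control of $\normm{Z_p}$, whereas $\normm{Z_p}$ depends in a quartic way on the unbounded (only sub-Gaussian) quantity $FD_p\avx$; this is precisely the reason one pays a $\log^3(n)$ in the sample complexity rather than $\log(n)$. Carefully separating (i) the high-probability event $\calE_{\mathrm{trunc}}$, (ii) the conditional variance computation exploiting Assumption~\ref{assum:CDP} (in particular $\esp{d^4}=2\esp{d^2}^2$, which is what makes $\esp{Z_p}$ take the clean form given by Lemma~\ref{lem:expe_lem_cdp}), and (iii) the matrix Bernstein tail, is the delicate part; the rest is bookkeeping of constants.
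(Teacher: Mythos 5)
Your proposal is correct and follows essentially the same route as the paper: the paper's proof of Lemma~\ref{lem:conhess_cdp} simply observes that $\nabla^2 f(\avx)=\frac{2}{nP}\sum_{j,p}|\adj{f_j}D_p\avx|^2D_pf_j\adj{f_j}D_p$ and invokes (a real-case adaptation of) the truncation-plus-matrix-Bernstein argument of \cite[Section~A.4.1]{Candes_WF_2015} as a black box to obtain \eqref{eq:conccandes}, which is exactly the estimate you derive by hand. The only point to tighten is the conditioning step: matrix Bernstein should be applied to the truncated summands $\tilde Z_p \eqdef Z_p\,\mathbf{1}_{\{\normm{FD_p\avx}_\infty\le C_1\sqrt{\log n}\,\normm{\avx}\}}$ (which remain independent), with the bias $\normm{\esp{Z_p}-\esp{\tilde Z_p}}$ controlled separately, rather than ``conditionally on'' the event $\calE_{\mathrm{trunc}}$, since conditioning breaks independence; this is routine and does not affect the conclusion.
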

\begin{proof}
Let $\adj{f}_j$ be the rows of the discrete Fourier transform, \ie $f_j[\ell] = e^{i\frac{2\pi j\ell}{n}}$. With a slight adaptation to the real case of the argument in \cite[Section~A.4.1]{Candes_WF_2015}, we deduce that
\begin{align}\label{eq:conccandes}
\normm{\frac{1}{nP}\sum_{j,p}^{n,P}|\adj{f}_jD_p\avx|^2D_pf_j\adj{f}_jD_p- \paren{\avx\transp{\avx}+\normm{\avx}^2\Id}}\leq \frac{\delta}{2}\normm{\avx}^2,
\end{align}
provided that $P\geq C(\delta)\log^3(n)$ with a probability at least $1-\frac{4P+1}{2n^3}$. Combining this with Lemma~\ref{lem:expe_lem_cdp}, we conclude.
\end{proof}

\subsubsection{Injectivity of the measurement operator}
We now establish that for $m$ large enough, the measurement matrix $A$ is injective \whp. Recall that the rows of $A$ are the $a_r^*$'s.

\begin{lemma}\label{pro:injectivity_cdp} 
Fix $\vrho\in]0,1[$. Assume that $P\geq C(\vrho)\log(n)$. Then with a probability at least $1-1/n^2$
\begin{equation}\label{eq:injectivite_cdp}
\paren{1-\vrho}\normm{x}^2 \leq \frac{1}{m}\normm{Ax}^2 \leq (1+\vrho)\normm{x}^2, \quad \forall x\in\bbR^n .
\end{equation}
\end{lemma}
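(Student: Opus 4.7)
The plan is to exploit Parseval's identity to reduce the claim to a one-dimensional concentration statement about the empirical second moments of the entries of the modulation vectors. Indexing the sensing vectors by pairs $r=(j,p)$ with $j\in\{0,\ldots,n-1\}$ and $p\in[P]$, we have $a_r=D_pf_j$, hence $\adj{a}_r x = \adj{f}_j D_p x = \calF(D_px)[j]$ for real $x$. Summing over $j$ and invoking Parseval for the unnormalized DFT yields
\begin{align*}
\frac{1}{m}\normm{Ax}^2
= \frac{1}{nP}\sum_{p=1}^{P}\sum_{j=0}^{n-1}|\calF(D_px)[j]|^2
= \frac{1}{P}\sum_{p=1}^{P}\normm{D_px}^2
= \sum_{\ell=0}^{n-1}x[\ell]^2\,S_\ell,
\end{align*}
where $S_\ell \eqdef \frac{1}{P}\sum_{p=1}^P d_p[\ell]^2$. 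The essential point is that after this reduction the quadratic form $\frac{1}{m}\normm{Ax}^2$ becomes a diagonally weighted Euclidean norm with data-independent weights $S_\ell$.

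Next I would control the deviation of each $S_\ell$ from its mean. By Assumption~\ref{assum:CDP}, for fixed $\ell$ the random variables $(d_p[\ell]^2)_{p\in[P]}$ are i.i.d., take values in $[0,M^2]$, and have mean $\esp{d^2}=1$. A standard Hoeffding inequality then delivers
\begin{align*}
\bbP\Ppa{|S_\ell-1|>\vrho} \leq 2\exp\Ppa{-\frac{2P\vrho^2}{M^4}} .
\end{align*}
A union bound over $\ell\in\{0,\ldots,n-1\}$ together with the choice $P \geq \frac{M^4}{2\vrho^2}\log(2n^3)=C(\vrho)\log n$ guarantees that $\max_\ell |S_\ell-1| \leq \vrho$ with probability at least $1-1/n^2$.

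On that event, substituting $1-\vrho \leq S_\ell \leq 1+\vrho$ into the displayed identity sandwiches $\frac{1}{m}\normm{Ax}^2$ between $(1-\vrho)\normm{x}^2$ and $(1+\vrho)\normm{x}^2$ simultaneously for every $x\in\bbR^n$. The bound is automatically uniform in $x$ without any covering argument, because the randomness factors through a diagonal operator; only the moment condition $\esp{d^2}=1$ and the boundedness $|d|\leq M$ from Assumption~\ref{assum:CDP} enter the proof, the symmetry and fourth-moment conditions playing no role here. There is no real obstacle beyond checking the Parseval step and picking the constant $C(\vrho)$ of the correct order $M^4/\vrho^2$; the statement is therefore much simpler than its Gaussian analogue in Lemma~\ref{pro:injectivity_G} precisely because of this diagonal structure.
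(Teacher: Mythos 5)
Your proof is correct and follows essentially the same route as the paper: the paper observes that $\frac{1}{m}\adj{A}A$ is diagonal with i.i.d.\ diagonal entries of mean $\esp{d^2}=1$ and concludes via Hoeffding plus a union bound, which is exactly your reduction to the weights $S_\ell$ (your Parseval computation is just the explicit verification of that diagonal structure). The only difference is that you spell out the details the paper delegates to a citation of \cite[Lemma~3.3]{candes_phase_2015}.
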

\begin{proof} 
This is a consequence of the fact that
\[
\normm{\frac{1}{m} A^*A - \Id} \leq \vrho
\]
with the claimed probability. Indeed, as for \cite[Lemma~3.3]{candes_phase_2015}, the covariance matrix $\frac{1}{m} A^*A$ is diagonal with \iid diagonal entries whose expectation is $\esp{d^2}=1$, and the statement follows from Hoeffding's inequality and a union bound. 
\end{proof}

\subsubsection{Local relative smoothness and relative strong convexity}\label{PrLocalSmad_cdp}
We now turn to proving local relative smoothness and relative strong convexity near the true vectors. Unlike the Gaussian case, we only have a local version of relative smoothness. The reason behind this, as discussed above, is that it seems very hard to have a uniform concentration bound for the Hessian of $f$ around its mean for the CDP model. To circumvent this, we use a continuity argument.
 
\begin{lemma}\label{pro:Lsmad_cdp} 
Fix $\delta\in]0,\min(\normm{\avx}^2,1)/2[$. Suppose that \eqref{eq:uniconcen_cdp} holds. Then there exists $\rho_{\delta} > 0$ such that for all $x,z\in B\BPa{\avx,\rho_{\delta}}$ and $x,z\in B\BPa{-\avx,\rho_{\delta}}$
\begin{align}\label{eq:Lsmad_cdp}
\frac{\Ppa{\min\pa{\normm{\avx}^2,1}-2\delta}}{1+\delta}D_{\psi}(x,z) \leq D_f(x,z) \leq 2(1+\delta)^2 D_{\psi}(x,z) .
\end{align}
\end{lemma}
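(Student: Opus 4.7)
The strategy is to establish the Loewner sandwich
\[
\tfrac{\min(\normm{\avx}^2,1)-2\delta}{1+\delta}\,\nabla^2\psi(x) \preceq \nabla^2 f(x) \preceq 2(1+\delta)^2\,\nabla^2\psi(x)
\]
first at the single point $x=\avx$ by using the concentration bound \eqref{eq:uniconcen_cdp}, then to extend it to a whole ball $B(\avx,\rho_\delta)$ by continuity of the maps $\nabla^2 f$ and $\nabla^2 \psi$, and finally to convert the Hessian comparison into the Bregman divergence inequality \eqref{eq:Lsmad_cdp} on segments via Lemma~\ref{Bregcomp}.

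Starting at $\avx$, combining Lemma~\ref{lem:expe_lem_cdp} with \eqref{eq:uniconcen_cdp} yields the matrix sandwich
\[
2\avx\transp{\avx}+(2-\delta)\normm{\avx}^2\Id \preceq \nabla^2 f(\avx) \preceq 2\avx\transp{\avx}+(2+\delta)\normm{\avx}^2\Id ,
\]
which, compared with $\nabla^2\psi(\avx)=(\normm{\avx}^2+1)\Id+2\avx\transp{\avx}$ from \eqref{hessent}, reduces each target inequality at $\avx$ to positivity of two scalar coefficients, the ones in front of $\Id$ and of $\avx\transp{\avx}$. For the upper bound these coefficients are $2(1+\delta)^2(\normm{\avx}^2+1)-(2+\delta)\normm{\avx}^2$ and $4(1+\delta)^2-2$, both strictly positive for $\delta>0$. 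For the lower bound with $\mu:=\frac{\min(\normm{\avx}^2,1)-2\delta}{1+\delta}$, the $\avx\transp{\avx}$-coefficient $2-2\mu$ is strictly positive because $\mu<\tfrac{1}{1+\delta}<1$, while the $\Id$-coefficient $(2-\delta)\normm{\avx}^2-\mu(\normm{\avx}^2+1)$ I would handle by a case split on whether $\normm{\avx}^2\leq 1$ or $\normm{\avx}^2>1$: after clearing the factor $1+\delta$, each case collapses to an elementary polynomial inequality in $\normm{\avx}^2$ and $\delta$ which is easily verified under the standing assumption $\delta<\min(\normm{\avx}^2,1)/2$.

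Since both Loewner comparisons at $\avx$ are strict and since $x\mapsto\nabla^2 f(x)$ and $x\mapsto\nabla^2 \psi(x)$ are continuous (they are polynomial in $x$), there exists $\rho_\delta>0$ such that the same comparisons persist (non-strictly) for every $x\in B(\avx,\rho_\delta)$. For any $x,z\in B(\avx,\rho_\delta)$ the segment $[x,z]$ stays in this convex ball, and Lemma~\ref{Bregcomp} combined with the linear additivity of Bregman divergences in Proposition~\ref{pp:bregman}\ref{pp:bregman2} upgrades the Hessian inequalities to the Bregman inequalities \eqref{eq:Lsmad_cdp} on $B(\avx,\rho_\delta)$. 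To transport the result to $B(-\avx,\rho_\delta)$, inspection of \eqref{hessg}--\eqref{hessent} reveals that $\nabla^2 f$ and $\nabla^2 \psi$ are even in $x$, so $\nabla^2 f(-\avx)=\nabla^2 f(\avx)$ and $\nabla^2 \psi(-\avx)=\nabla^2 \psi(\avx)$, and the same argument applies verbatim on that ball.

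The main obstacle is pinning down the correct lower-bound constant: the factor $\frac{\min(\normm{\avx}^2,1)-2\delta}{1+\delta}$ is not the one a naive single-expression computation would suggest, and the case split at $\normm{\avx}^2=1$ is genuinely needed to keep the $\Id$-coefficient non-negative in both regimes. Once the two polynomial inequalities in the case analysis are checked, extending the Loewner comparison by continuity and invoking Lemma~\ref{Bregcomp} is routine.
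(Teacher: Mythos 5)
Your proof is correct and follows essentially the same route as the paper's: concentration of $\nabla^2 f(\avx)$ around its CDP expectation, a Loewner comparison with $\nabla^2\psi(\avx)$ split on whether $\normm{\avx}^2\lessgtr 1$, a continuity argument to pass from the point $\avx$ to a ball $B(\pm\avx,\rho_\delta)$, and Lemma~\ref{Bregcomp} to convert the Hessian sandwich into the Bregman inequality. The only (immaterial) difference is bookkeeping: the paper first proves the cruder bounds $\paren{\min\pa{\normm{\avx}^2,1}-\delta}\nabla^2\psi(\avx)\preceq\nabla^2 f(\avx)\preceq(2+\delta)\nabla^2\psi(\avx)$ and then spends the remaining $\delta$-budget on two explicit continuity perturbations of $\nabla^2 f$ and $\nabla^2\psi$, whereas you verify the final constants directly at $\avx$ via positivity of the $\Id$- and $\avx\transp{\avx}$-coefficients and invoke strictness plus continuity once; both yield the same non-explicit $\rho_\delta$.
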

Observe that while in the Gaussian case, the ball radius on which relative strong convexity holds is fixed and explicit, for the CDP model, we only know it exists and it depends on $\delta$. 

\begin{proof}
We prove the claim for $\avx$ and the same holds obviously around $-\avx$.
Using \eqref{eq:uniconcen_cdp} and \eqref{hessent} gives  
\begin{align*}
\nabla^2f(\avx)
 &\preceq\esp{\nabla^2f(\avx)}+\delta\normm{\avx}^2\Id \\
 &= 2\avx\transp{\avx}+(2+\delta)\normm{\avx}^2\Id \\
 &\preceq (2 +\delta)\para{2\avx\transp{\avx}+\pa{\normm{\avx}^2+1}\Id},\\
 &= (2+\delta)\nabla^2\psi(\avx). 
\end{align*}
Again, from \eqref{eq:uniconcen_cdp} and \eqref{hessent}, we get
\begin{align*}
\nabla^2f(\avx)&\slon\esp{\nabla^2f(\avx)}-\delta\normm{\avx}^2\Id \slon 2(\avx\transp{\avx}+\normm{\avx}^2\Id) -\delta\nabla^2\psi(\avx)\Id.
\end{align*}
If $\normm{\avx}\geq1$, we arrive at
\begin{align*}
\nabla^2f(\avx)
\slon 2\avx\transp{\avx}+ (\normm{\avx}^2+1)\Id-\delta\nabla^2\psi(\avx)\Id = (1-\delta)\nabla^2\psi(\avx).
\end{align*}
If $\normm{\avx} \leq 1$, we have
\begin{align*}
\nabla^2f(\avx)-\para{\normm{\avx}^2-\delta}\nabla^2\psi(\avx)
&\slon 2\avx\transp{\avx}+2\normm{\avx}^2\Id -2\normm{\avx}^2\avx\transp{\avx}-\normm{\avx}^4\Id-\normm{\avx}^2\Id \\
&= 2(1-\normm{\avx}^2)\avx\transp{\avx} + (\normm{\avx}^2 - \normm{\avx}^4)\Id \slon 0 .
\end{align*}
Therefore
\begin{align}\label{eq:tmp1}
\Ppa{\min\pa{\normm{\avx}^2,1}-\delta}\nabla^2\psi(\avx) \preceq \nabla^2 f(\avx) \preceq (2+\delta)\nabla^2\psi(\avx) . 
\end{align}
Combining \eqref{eq:tmp1} with continuity of $\nabla^2 f$ and $1$-strong convexity of $\psi$, $\exists \rho_{\delta} > 0$ such that $\forall x\in B(\avx,\rho_{\delta})$ we have  
\begin{equation}\label{eq:bndhessfpsi_cdp}
\Ppa{\min\pa{\normm{\avx}^2,1}-2\delta}\nabla^2\psi(\avx) \preceq \nabla^2 f(\avx)-\delta\Id \preceq \nabla^2f(x) \preceq \nabla^2f(\avx)+\delta\Id \preceq 2(1+\delta)\nabla^2\psi(\avx) .
\end{equation}
Continuity of $\nabla^2 \psi$ and $1$-strong convexity of $\psi$ also yield that $\forall x\in B(\avx,\rho_{\delta})$
\begin{equation}\label{eq:conthesspsi_cdp}
\nabla^2\psi(\avx) \preceq \nabla^2\psi(x)+\delta \Id \preceq (1+\delta)\nabla^2\psi(x) \qandq \nabla^2\psi(x) \preceq \nabla^2\psi(\avx) + \delta\Id \preceq (1+\delta)\nabla^2\psi(\avx) .
\end{equation}
Combining \eqref{eq:bndhessfpsi_cdp} and \eqref{eq:conthesspsi_cdp}, we obtain that $\forall x \in B(\avx,\rho_{\delta})$,
\begin{align*}
\frac{\Ppa{\min\pa{\normm{\avx}^2,1}-2\delta}}{1+\delta}\nabla^2\psi(x) \preceq \nabla^2f(x) \preceq 2(1+\delta)^2\nabla^2\psi(x) . 
\end{align*}
Invoking Lemma~\ref{Bregcomp} and convexity of the ball, we get the statement.
\end{proof}

\subsubsection{Spectral initialization}
We now show the analogue of Lemma~\ref{pro:spectralinit} for the CDP measurement model.

\begin{lemma}\label{pro:spectralinit_cdp}
Fix $\vrho\in ]0,1[$. If the number of patterns obeys $P \geq C(\vrho)n\log^3(n)$, for some sufficiently large constant $C(\vrho) > 0$, then with probability at least $1-\frac{4P+1}{n^3}-\frac{1}{n^2}$, $\xo$ satisfies:
\begin{enumerate}[label=(\roman*)]
 \setlength{\itemindent}{0.4cm}
\item \label{pro:spectralinit_cdp-i} 
$\dist(\xo,\overline{\calX}) \leq \eta_1(\vrho)\normm{\avx}$, 
where $\eta_1$ is the function defined in \eqref{radius}.\\

Let $\delta\in]0,\min(\normm{\avx}^2,1)/2[$ and $\rho_{\delta}$ is the neighborhood radius in Lemma~\ref{pro:Lsmad_cdp}. Suppose that $\vrho$ is sufficiently small, \ie
\begin{equation}\label{eq:spectralinitvrhobnd_cdp}
\vrho \leq \min\Ppa{\delta,\eta_1^{-1}\Ppa{\frac{\rho_{\delta}/\normm{\avx}}{\sqrt{6(\normm{\avx}^2+\rho_{\delta}^2)+1}}}} .
\end{equation}
Then, with the same probability as above,
\item \label{pro:spectralinit_cdp-ii} $f(\xo)\leq 2(1+\delta^2)\frac{\Theta(\eta_1(\vrho)\normm{\avx})}{2}\eta_1(\vrho)^2\normm{\avx}^2$ ;
        
\item \label{pro:spectralinit_cdp-iii} $\xo \in B\Ppa{\overline{\calX},\frac{\rho_{\delta}}{\max\Ppa{\sqrt{\Theta(\rho_{\delta})},1}}}$. 
\end{enumerate}
\end{lemma}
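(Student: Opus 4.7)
The plan is to mirror the three-step structure of the proof of Lemma~\ref{pro:spectralinit}, substituting the Gaussian-specific concentration bounds by their CDP counterparts: the bound \eqref{eq:conccandes} (obtained inside the proof of Lemma~\ref{lem:conhess_cdp}) will replace Lemma~\ref{lem:conhess_G_int}, Lemma~\ref{pro:injectivity_cdp} will replace Lemma~\ref{pro:injectivity_G}, and the \emph{local} relative smoothness estimate of Lemma~\ref{pro:Lsmad_cdp} will play the role of the global one from Lemma~\ref{pro:Lsmad_G}.

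For claim~\ref{pro:spectralinit_cdp-i}, I would set $Y = \frac{1}{m}\sum_r y[r]\,a_r\adj{a_r}$, whose expectation equals $\avx\transp{\avx}+\normm{\avx}^2\Id$ by Lemma~\ref{lem:expe_lem_cdp}, and invoke \eqref{eq:conccandes} with the parameter $\delta$ set to $\vrho$ to obtain $\normm{Y - \esp{Y}} \leq (\vrho/2)\normm{\avx}^2$. Letting $\wtilde{x}$ be a top eigenvector of $Y$ rescaled so that $\normm{\wtilde{x}}=\normm{\avx}$, and plugging $\wtilde{x}$ and $\avx$ respectively into the quadratic form associated with $Y-\esp{Y}$, a short algebraic manipulation yields $(\transp{\wtilde{x}}\avx)^2 \geq (1-\vrho)\normm{\avx}^4$, hence $\dist(\wtilde{x},\overline{\calX}) \leq \sqrt{2-2\sqrt{1-\vrho}}\normm{\avx}$. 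The rescaling $\xo = \sqrt{m^{-1}\sum_r y[r]}\,\wtilde{x}/\normm{\avx}$ introduces an additional error $\normm{\xo - \wtilde{x}} \leq (\vrho/2)\normm{\avx}$ that is controlled by Lemma~\ref{pro:injectivity_cdp} applied with parameter $\vrho$, and a triangle inequality delivers $\dist(\xo,\overline{\calX}) \leq \eta_1(\vrho)\normm{\avx}$.

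For claim~\ref{pro:spectralinit_cdp-ii}, I would first use \eqref{eq:spectralinitvrhobnd_cdp} together with claim~\ref{pro:spectralinit_cdp-i} to ensure that $\xo \in B(\avx,\rho_\delta)$ (fixing the global sign ambiguity WLOG). The pair $(\xo,\avx)$ therefore lies in the ball on which Lemma~\ref{pro:Lsmad_cdp} applies, yielding $D_f(\xo,\avx) \leq 2(1+\delta)^2 D_\psi(\xo,\avx)$. Since $f(\avx)=0$ and $\nabla f(\avx)=0$, the left-hand side collapses to $f(\xo)$, while Proposition~\ref{pp:bregman}\ref{pp:bregman4} controls the right-hand side by $\frac{\Theta(\eta_1(\vrho)\normm{\avx})}{2}\eta_1(\vrho)^2\normm{\avx}^2$. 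For claim~\ref{pro:spectralinit_cdp-iii}, using the explicit estimate $\Theta(\rho_\delta) \leq 6(\normm{\avx}^2 + \rho_\delta^2)+1$ and the monotonicity of $\eta_1$, the second term in the bound \eqref{eq:spectralinitvrhobnd_cdp} exactly encodes $\eta_1(\vrho)\normm{\avx} \leq \rho_\delta/\max(\sqrt{\Theta(\rho_\delta)},1)$; the probability is then obtained by a union bound over the two random events used above.

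The main obstacle, as is typical for the CDP model, is that Lemma~\ref{pro:Lsmad_cdp} only provides an implicit local smoothness radius $\rho_\delta$ obtained through a continuity argument, with no explicit expression in the data. Consequently the bound \eqref{eq:spectralinitvrhobnd_cdp} is itself implicit: one must first fix $\delta$, extract $\rho_\delta$ from Lemma~\ref{pro:Lsmad_cdp}, and only then verify that $\vrho$ is small enough for the spectral initialization to land inside the shrunken neighbourhood $B(\overline{\calX},\rho_\delta/\max(\sqrt{\Theta(\rho_\delta)},1))$. This is the only genuine departure from the Gaussian argument, where the analogous radius was explicit in $\vrho$ and $\avx$.
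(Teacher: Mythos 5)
Your proposal is correct and follows essentially the same route as the paper, whose own proof is a one-line remark that the argument of Lemma~\ref{pro:spectralinit} carries over upon invoking Lemma~\ref{lem:conhess_cdp} and Lemma~\ref{pro:injectivity_cdp} for claim~\ref{pro:spectralinit_cdp-i} and Lemma~\ref{pro:Lsmad_cdp} (with $\vrho$ small as prescribed) for the last two claims — precisely the substitution scheme you carry out, including the correct adjustment of the eigenvector algebra for the CDP expectation $\avx\transp{\avx}+\normm{\avx}^2\Id$ in place of the Gaussian $2\avx\transp{\avx}+\normm{\avx}^2\Id$. The only cosmetic mismatch is that your derivation yields the constant $2(1+\delta)^2$ in claim~\ref{pro:spectralinit_cdp-ii}, consistent with Lemma~\ref{pro:Lsmad_cdp}, whereas the statement writes $2(1+\delta^2)$, apparently a typo in the paper.
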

\begin{proof}
\begin{enumerate}[label=(\roman*)]
The proof of this claim is similar to that of Lemma~\ref{pro:spectralinit} for the Gaussian case, where we now invoke Lemma~\ref{lem:conhess_cdp} and Lemma~\ref{pro:injectivity_cdp} for statement~\ref{pro:spectralinit_cdp-i}. For the last two claims, we also use Lemma~\ref{pro:Lsmad_cdp} and that $\vrho$ is small enough as prescribed. 
\end{enumerate}
\end{proof}
}

\end{appendices}

\begin{acknowledgments}
The authors thank the French National Research Agency (ANR) for funding the project FIRST (ANR-19-CE42-0009). 
\end{acknowledgments}

\small
\bibliographystyle{plain}
{
\bibliography{biblio/biblio3}

\begin{thebibliography}{10}

\bibitem{akutowicz_determination_1956}
E.~J. Akutowicz.
\newblock On the determination of the phase of a {Fourier} integral, {I}.
\newblock {\em Transactions of the American Mathematical Society}, 83(1):179,
  September 1956.

\bibitem{akutowicz_determination_1957}
E.~J. Akutowicz.
\newblock On the determination of the phase of a {Fourier} integral, {II}.
\newblock {\em Proceedings of the American Mathematical Society}, 8(2):234,
  April 1957.

\bibitem{amra_instantaneous_2018}
C.~Amra, M.~Zerrad, S.~Liukaityte, and M.~Lequime.
\newblock Instantaneous one-angle white-light scatterometer.
\newblock {\em Opt. Express, OE}, 26(1):204--219, January 2018.

\bibitem{attouch_convergence_2013}
H.~Attouch, J.~Bolte, and B.~F. Svaiter.
\newblock Convergence of descent methods for semi-algebraic and tame problems:
  proximal algorithms, forward{\textendash}backward splitting, and regularized
  {Gauss}{\textendash}{Seidel} methods.
\newblock {\em Math. Program.}, 137(1-2):91--129, February 2013.

\bibitem{balan_reconstruction_2016}
R.~Balan.
\newblock Reconstruction of signals from magnitudes of redundant
  representations: The complex case.
\newblock {\em Found Comput Math}, 16(3):677--721, June 2016.

\bibitem{balan_signal_2006}
R.~Balan, P.~Casazza, and D.~Edidin.
\newblock On signal reconstruction without phase.
\newblock {\em Applied and Computational Harmonic Analysis}, 20(3):345--356,
  May 2006.

\bibitem{barakat_algorithms_1985}
R.~Barakat and G.~Newsam.
\newblock Algorithms for reconstruction of partially known, band-limited
  {Fourier}-transform pairs from noisy data.
\newblock {\em J. Opt. Soc. Am. A, JOSAA}, 2(11):2027--2039, November 1985.

\bibitem{bauschke_dykstras_2000}
H.~Bauschke, H. and Adrian~S. Lewis.
\newblock Dykstra's algorithm with {Bregman} projections: {A} convergence
  proof.
\newblock {\em Optimization}, 48(4):409--427, January 2000.

\bibitem{bauschke_descent_2016}
H.~H. Bauschke, J.~Bolte, and M.~Teboulle.
\newblock A descent lemma beyond {Lipschitz} gradient continuity: First-order
  methods revisited and applications.
\newblock {\em Mathematics of Operations Research}, page~20, 2016.

\bibitem{bauschke_phase_2002}
H.~H. Bauschke, P.~L. Combettes, and D.~R. Luke.
\newblock Phase retrieval, error reduction algorithm, and {Fienup} variants: a
  view from convex optimization.
\newblock {\em J. Opt. Soc. Am. A}, 19(7):1334, July 2002.

\bibitem{Bauschke04}
H.~H. Bauschke, P.~L. Combettes, and D.~R. Luke.
\newblock Finding best approximation pairs relative to two closed convex sets
  in {Hilbert} spaces.
\newblock {\em J. Approx. Theory}, 127:178--192, 2004.

\bibitem{Bauschke13}
H.~H. Bauschke, D.~R. Luke, H.~M. Phan, and X.~Wang.
\newblock Restricted normal cones and the method of alternating projections:
  applications.
\newblock {\em Set-Valued and Variational Analysis}, 21:475--501, 2013.

\bibitem{Bauschke19}
Heinz~H. Bauschke, J\'er\^ome Bolte, Jiawei Chen, Marc Teboulle, and Xianfu
  Wang.
\newblock On linear convergence of non-euclidean gradient methods without
  strong convexity and {Lipschitz} gradient continuity.
\newblock {\em Journal of Optimization Theory and Applications},
  182(3):1068--1087, 2019.

\bibitem{Beinert15}
Robert Beinert and Gerlind Plonka.
\newblock Ambiguities in one-dimensional discrete phase retrieval from
  {Fourier} magnitudes.
\newblock {\em J. Fourier Ana. App.}, 21(6):1169--1198, 2015.

\bibitem{BenTal01}
A.~Ben-Tal and A.~S. Nemirovski.
\newblock {\em Lectures on modern convex optimization: analysis, algorithms,
  and engineering applications}.
\newblock MPS-SIAM series on optimization. Society for Industrial and Applied
  Mathematics, 2001.

\bibitem{Bendory2017}
Tamir Bendory, Robert Beinert, and Yonina~C. Eldar.
\newblock Fourier phase retrieval: Uniqueness and algorithms.
\newblock In Holger Boche, Giuseppe Caire, Robert Calderbank, Maximilian
  M{\"a}rz, Gitta Kutyniok, and Rudolf Mathar, editors, {\em Compressed Sensing
  and its Applications}, Applied and Numerical Harmonic Analysis, pages 55--91.
  Birkh{\"a}user, 2017.

\bibitem{birnbaum2011distributed}
Benjamin Birnbaum, Nikhil~R Devanur, and Lin Xiao.
\newblock Distributed algorithms via gradient descent for fisher markets.
\newblock In {\em Proceedings of the 12th ACM conference on Electronic
  commerce}, pages 127--136, 2011.

\bibitem{bolte_first_2017}
J.~Bolte, S.~Sabach, M.~Teboulle, and Y.~Vaisbourd.
\newblock First order methods beyond convexity and {Lipschitz} gradient
  continuity with applications to quadratic inverse problems.
\newblock {\em SIAM J. Optim.}, 28(3):2131--2151, 2018.

\bibitem{BruckSodin79}
Yu.M. Bruck and L.G. Sodin.
\newblock On the ambiguity of the image reconstruction problem.
\newblock {\em Optics Communications}, 30(3):304--308, 1979.

\bibitem{Buet22}
Xavier Buet, Myriam Zerrad, Michel Lequime, Gabriel Soriano, Jean-Jacques
  Godeme, Jalal Fadili, and Claude Amra.
\newblock Immediate and one-point roughness measurements using spectrally
  shaped light.
\newblock {\em Opt. Express}, 30(10):16078--16093, May 2022.

\bibitem{candes_phase_2015}
E.~Cand{\`e}s, X.~Li, and M.~Soltanolkotabi.
\newblock Phase retrieval from coded diffraction patterns.
\newblock {\em Applied and Computational Harmonic Analysis}, 39(2):277--299,
  September 2015.

\bibitem{Candes_WF_2015}
E.~Cand{\`e}s, X.~Li, and M.~Soltanolkotabi.
\newblock Phase retrieval via {W}irtinger flow: Theory and algorithms.
\newblock {\em IEEE Trans. Inform. Theory}, 61(4):1985--2007, 2015.

\bibitem{candes_solving_2014}
E.~Cand{\`e}s and Xiaodong Li.
\newblock Solving quadratic equations via phaselift when there are about as
  many equations as unknowns.
\newblock {\em Found. Comput. Math.}, 2014.

\bibitem{candes_matrix_2010}
E.~Cand{\`e}s and Y.~Plan.
\newblock Matrix completion with noise.
\newblock {\em Proceedings of the IEEE}, 98:925--936, 2010.

\bibitem{candes_phaselift_2013}
E.~Cand{\`e}s, T.~Strohmer, and V.~Voroninski.
\newblock {PhaseLift}: Exact and stable signal recovery from magnitude
  measurements via convex programming.
\newblock {\em Communications on Pure and Applied Mathematics},
  66(8):1241--1274, 2013.

\bibitem{candes_phase_2013-2}
Emmanuel~J. Cand\`{e}s, Yonina~C. Eldar, Thomas Strohmer, and Vladislav
  Voroninski.
\newblock Phase retrieval via matrix completion.
\newblock {\em SIAM Journal on Imaging Sciences}, 6(1):199--225, 2013.

\bibitem{chandra2017phasepack}
Rohan Chandra, Ziyuan Zhong, Justin Hontz, Val McCulloch, Christoph Studer, and
  Tom Goldstein.
\newblock Phasepack: A phase retrieval library.
\newblock {\em Asilomar Conference on Signals, Systems, and Computers}, 2017.

\bibitem{chen_convergence_1993}
G.~Chen and M.~Teboulle.
\newblock Convergence analysis of a proximal-like minimization algorithm using
  {Bregman} functions.
\newblock {\em SIAM J. Optim.}, 3(3):538--543, August 1993.

\bibitem{chen_solving_2017}
Y.~Chen and E.~Cand{\`e}s.
\newblock Solving random quadratic systems of equations is nearly as easy as
  solving linear systems.
\newblock {\em Comm. Pure Appl. Math.}, 70(5):822--883, May 2017.

\bibitem{chen_gradient_2019}
Y.~Chen, Y.~Chi, J.~Fan, and C.~Ma.
\newblock Gradient descent with random initialization: Fast global convergence
  for nonconvex phase retrieval.
\newblock {\em Math. Program.}, 176(1-2):5--37, July 2019.

\bibitem{crimmins_ambiguity_1981}
T.~R. Crimmins and J.~R. Fienup.
\newblock Ambiguity of phase retrieval for functions with disconnected support.
\newblock {\em J. Opt. Soc. Am.}, 71(8):1026, August 1981.

\bibitem{crimmins_uniqueness_1983}
T.~R. Crimmins and J.~R. Fienup.
\newblock Uniqueness of phase retrieval for functions with sufficiently
  disconnected support.
\newblock {\em J. Opt. Soc. Am.}, 73(2):218, February 1983.

\bibitem{davis_nonsmooth_2020}
D.~Davis, D.~Drusvyatskiy, and C.~Paquette.
\newblock The nonsmooth landscape of phase retrieval.
\newblock {\em IMA Journal of Numerical Analysis}, 40(4):2652--2695, October
  2020.

\bibitem{Drusvyatskiy15}
D.~Drusvyatskiy, A.~D. Ioffe, and A.~S. Lewis.
\newblock Transversality and alternating projections for nonconvex sets.
\newblock {\em Found. Comput. Math.}, 15(6):1637--1651, 2015.

\bibitem{fannjiang_numerics_2020}
A.~Fannjiang and T.~Strohmer.
\newblock The numerics of phase retrieval.
\newblock {\em Acta Numerica}, 29:125--228, May 2020.

\bibitem{fienup_phase_1982}
J.~R. Fienup.
\newblock Phase retrieval algorithms: a comparison.
\newblock {\em Appl. Opt.}, 21(15):2758, August 1982.

\bibitem{gerchberg_practical_1972}
R.~Gerchberg and W.~Saxton.
\newblock A practical algorithm for the determination of phase from image and
  diffraction plane pictures.
\newblock {\em Optik}, 35(2):237, 1972.

\bibitem{goemans_improved_1995}
X.~Goemans, M. and P.~Williamson, D.
\newblock Improved approximation algorithms for maximum cut and satisfiability
  problems using semidefinite programming.
\newblock {\em Journal of the ACM (JACM)}, 42(6):1115--1145, 1995.

\bibitem{gross_improved_2017}
D.~Gross, F.~Krahmer, and R.~Kueng.
\newblock Improved recovery guarantees for phase retrieval from coded
  diffraction patterns.
\newblock {\em Applied and Computational Harmonic Analysis}, 42(1):37--64,
  January 2017.

\bibitem{hayes_reconstruction_1982}
M.~Hayes.
\newblock The reconstruction of a multidimensional sequence from the phase or
  magnitude of its {Fourier} transform.
\newblock {\em IEEE Transactions on Acoustics, Speech, and Signal Processing},
  30(2):140--154, April 1982.
\newblock Conference Name: IEEE Transactions on Acoustics, Speech, and Signal
  Processing.

\bibitem{HesseLuke13}
R.~Hesse and D.~R. Luke.
\newblock Nonconvex notions of regularity and convergence of fundamental
  algorithms for feasibility problems.
\newblock {\em SIAM J. Optim.}, 23(4):2397--2419, 2013.

\bibitem{JaganathanReview16}
K.~Jaganathan, Y.~C. Eldar, and B.~Hassibi.
\newblock Phase retrieval: An overview of recent developments.
\newblock In A.~Stern, editor, {\em Optical Compressive Imaging}. CRC Press,
  2016.

\bibitem{Lee19}
J.~Lee, D., I.~Panageas, G.~Piliouras, M.~Simchowitz, M.~Jordan, I., and
  B.~Recht.
\newblock First-order methods almost always avoid strict saddle points.
\newblock {\em Mathematical Programming}, 176(1):311--337, 2019.

\bibitem{Lewis09}
A.~S. Lewis, D.~R. Luke, , and J.~Malick.
\newblock Local linear convergence of alternating and averaged projections.
\newblock {\em Found. Comput. Math.}, 9(4):485--513, 2009.

\bibitem{Lewis08}
A.~S. Lewis and J.~Malick.
\newblock Alternating projections on manifolds.
\newblock {\em Math. Oper. Res.}, 33:216--234, 2008.

\bibitem{loj1}
S.~{\L}ojasiewicz.
\newblock Une propri\'et\'e topologique des sous-ensembles analytiques r\'eels.
\newblock In {\em {L}es {\'E}quations aux {D}\'eriv\'ees {P}artielles}, pages
  87--89. {Editions du Centre National de la Recherche Scientifique}, 1963.

\bibitem{loj2}
S.~{\L}ojasiewicz.
\newblock Ensembles semi-analytiques.
\newblock {\em Lectures Notes IHES (Bures-sur-Yvette)}, 1965.

\bibitem{lu2018relatively}
Haihao Lu, Robert~M. Freund, and Yurii Nesterov.
\newblock Relatively smooth convex optimization by first-order methods, and
  applications.
\newblock {\em SIAM Journal on Optimization}, 28(1):333--354, 2018.

\bibitem{Luke08}
D.~R. Luke.
\newblock Finding best approximation pairs relative to a convex and a
  prox-regular set in {Hilbert} space.
\newblock {\em SIAM J. Optim.}, 19(2):714--739, 2008.

\bibitem{Luke12}
D.~R. Luke.
\newblock Local linear convergence of approximate projections onto regularized
  sets.
\newblock {\em Nonlinear Anal.}, 75:1531--1546, 2012.

\bibitem{luke_phase_2017}
D.~R. Luke.
\newblock Phase {Retrieval}, {What}{\textquoteright}s {New}?
\newblock {\em SIAG/OPT Views and News}, 25(1):1--6, 2017.

\bibitem{Luke20}
D.~Russell Luke and Anna-Lena Martins.
\newblock Convergence analysis of the relaxed douglas--rachford algorithm.
\newblock {\em SIAM Journal on Optimization}, 30(1):542--584, 2020.

\bibitem{Luke18}
D.~Russell Luke, Nguyen~H. Thao, and Matthew~K. Tam.
\newblock Quantitative convergence analysis of iterated expansive, set-valued
  mappings.
\newblock {\em Mathematics of Operations Research}, 43(4):1143--1176, 2018.

\bibitem{Miao99}
J.~Miao, P.~Charalambous ad~J.~Kirz, and D.~Sayre.
\newblock Extending the methodology of x-ray crystallography to allow imaging
  of micrometre-sized non-crystalline specimens.
\newblock {\em Nature}, 400:342--344, 1999.

\bibitem{mukkamala_convex-concave_2020}
C.~Mukkamala, M., P.~Ochs, T.~Pock, and S.~Sabach.
\newblock Convex-concave backtracking for inertial {Bregman} proximal gradient
  algorithms in nonconvex optimization.
\newblock {\em SIAM Journal on Mathematics of Data Science}, 2(3):658--682,
  January 2020.

\bibitem{netrapalli_phase_2015}
P.~Netrapalli, P.~Jain, and S.~Sanghavi.
\newblock Phase retrieval using alternating minimization.
\newblock {\em IEEE Transactions on Signal Processing}, 63(18):4814--4826,
  2015.

\bibitem{Noll16}
D.~Noll and A.~Rondepierre.
\newblock On local convergence of the method of alternating projections.
\newblock {\em Found. Comput. Math.}, 16(2):425--455, 2016.

\bibitem{Phan16}
H.~Phan.
\newblock Linear convergence of the {Douglas-Rachford} method for two closed
  sets.
\newblock {\em Optimization}, 65:369--385, 2016.

\bibitem{rockafellar_convex_1970}
R.~T. Rockafellar.
\newblock {\em Convex Analysis}.
\newblock Princeton University Press, 1970.

\bibitem{Sahinoglou91}
H.~Sahinoglou and S.~Cabrera.
\newblock On phase retrieval of finite-length sequences using the initial time
  sample.
\newblock {\em IEEE Transactions on Circuits and Systems}, 38(5):954--958,
  1991.

\bibitem{shechtman_phase_2014}
Y~Shechtman, C.~Eldar, Y., O.~Cohen, N.~Chapman, H., J.~Miao, and M.~Segev.
\newblock Phase retrieval with application to optical imaging: A contemporary
  overview.
\newblock {\em IEEE Signal Processing Magazine}, 32(3):87--109, May 2015.

\bibitem{Silveti22}
A.~Silveti-Falls, C.~Molinari, and J.~Fadili.
\newblock A stochastic {Bregman} primal-dual splitting algorithm for composite
  optimization.
\newblock {\em Pure and Applied Functional Analysis (special issue in honor of
  L. Bregman)}, 2022.
\newblock in press.

\bibitem{sun_geometric_2018}
J.~Sun, Q.~Qu, and J.~Wright.
\newblock A geometric analysis of phase retrieval.
\newblock {\em Found Comput Math}, 18(5):1131--1198, October 2018.

\bibitem{teboulle_entropic_1992}
M.~Teboulle.
\newblock Entropic proximal mappings with applications to nonlinear
  programming.
\newblock {\em Mathematics of Operations Research}, 17(3):670--690, August
  1992.

\bibitem{Teboulle18}
Marc Teboulle.
\newblock A simplified view of first order methods for optimization.
\newblock {\em Mathematical Programming}, 170(1):67--96, 2018.

\bibitem{vaswani_non-convex_2020}
N.~Vaswani.
\newblock Non-convex structured phase retrieval.
\newblock {\em arXiv:2006.13298 [cs, eess, math, stat]}, June 2020.

\bibitem{vershynin_introduction_2011}
R.~Vershynin.
\newblock Introduction to the non-asymptotic analysis of random matrices.
\newblock {\em arXiv:1011.3027 [cs, math]}, November 2011.

\bibitem{waldspurger_phase_2018}
I.~Waldspurger.
\newblock Phase retrieval with random gaussian sensing vectors by alternating
  projections.
\newblock {\em IEEE Transactions on Information Theory}, 64(5):3301--3312, May
  2018.

\bibitem{waldspurger_phase_2015}
I.~Waldspurger, A.~d{\textquoteright}Aspremont, and S.~Mallat.
\newblock Phase recovery, {MaxCut} and complex semidefinite programming.
\newblock {\em Math. Program.}, 149(1):47--81, February 2015.

\bibitem{walther_question_1963}
A.~Walther.
\newblock The question of phase retrieval in optics.
\newblock {\em Optica Acta: International Journal of Optics}, 10(1):41--49,
  January 1963.

\bibitem{wang_solving_2017}
G.~Wang, B.~Giannakis, G., and C.~Eldar, Y.
\newblock Solving systems of random quadratic equations via truncated amplitude
  flow.
\newblock {\em arXiv:1605.08285 [cs, math, stat]}, August 2017.

\bibitem{zhang_nonconvex_2017}
H.~Zhang, Y.~Liang, and Y.~Chi.
\newblock A nonconvex approach for phase retrieval: Reshaped {Wirtinger} flow
  and incremental algorithms.
\newblock {\em Journal of Machine Learning Research}, 18(141):1--35, 2017.

\end{thebibliography}
}

\end{document}